\documentclass[12pt]{amsart}

\usepackage[T1]{fontenc}
\usepackage{fullpage}
\usepackage{lmodern,microtype}
\usepackage{amsmath,amssymb,amsthm,mathtools,mathrsfs}
\usepackage{graphicx}
\usepackage{enumitem,needspace}
\usepackage{array,booktabs}
\usepackage{xcolor}
\usepackage{tikz,tikz-cd}
\usetikzlibrary{positioning}
\usepackage[english]{babel}
\usepackage{hyperref}
\usepackage[capitalise,nameinlink,noabbrev]{cleveref}

\hypersetup{
  colorlinks=true,
  linkcolor=blue,
  citecolor=blue,
  filecolor=magenta,
  urlcolor=cyan,
  pdftitle={Uniform local models of quasi-unipotent p-adic differential modules},
  pdfsubject={Uniform analytic realization of differential and Frobenius modules},
  pdfauthor={Poornima Belvotagi},
  pdfkeywords={p-adic differential equations, Robba rings, quasi-unipotence, local monodromy, Frobenius structures}
}
\setlist[enumerate]{leftmargin=*,itemsep=4pt}
\setlist[itemize]{leftmargin=*,itemsep=3pt}

\theoremstyle{plain}
\newtheorem{maintheorem}{Theorem}

\newtheorem{theorem}{Theorem}[subsection]
\newtheorem{lemma}[theorem]{Lemma}
\newtheorem{proposition}[theorem]{Proposition}
\newtheorem{corollary}[theorem]{Corollary}

\theoremstyle{definition}
\newtheorem{definition}[theorem]{Definition}
\newtheorem{remark}[theorem]{Remark}
\numberwithin{equation}{section}
\crefname{maintheorem}{Theorem}{Theorems}
\Crefname{maintheorem}{Theorem}{Theorems}

\DeclareMathOperator{\GL}{GL}
\DeclareMathOperator{\rk}{rank}
\DeclareMathOperator{\Aut}{Aut}
\DeclareMathOperator{\End}{End}
\DeclareMathOperator{\Irr}{Irr}
\DeclareMathOperator{\Ind}{Ind}
\DeclareMathOperator{\Sw}{Sw}
\DeclareMathOperator{\QU}{QU}
\newcommand{\del}{\delta}
\newcommand{\OO}{\mathcal{O}}
\newcommand{\LL}{\mathcal{L}}
\newcommand{\FM}{\mathsf{F}}
\newcommand{\Q}{\mathbb{Q}}
\newcommand{\ee}{\boldsymbol{e}}
\newcommand{\ff}{\boldsymbol{f}}
\newcommand{\norm}[1]{\left\lVert#1\right\rVert}

\title[Uniform local models]{Uniform local models of\texorpdfstring{\protect\\}{ }quasi-unipotent $p$-adic differential modules}
\author{Poornima Belvotagi}
\address{Department of Mathematics, UC San Diego, La Jolla, CA 92093-0112, USA}
\email{pb@ucsd.edu}
\subjclass[2020]{Primary 12H25; Secondary 11S15, 14F30}
\keywords{$p$-adic differential equations, Robba rings, quasi-unipotence, local monodromy, Frobenius structures}
\thanks{The author was supported in part by National Science Foundation grant DMS-2401536. Additional support was provided through National Science Foundation grant DMS-2053473 and the S.~E.~Warschawski Professorship, both held by Kiran Kedlaya.}
\date{October 2026}

\begin{document}
\begin{abstract}
We construct uniform analytic models of quasi-unipotent differential modules over Robba rings. For modules of rank at most $r$ and highest ramification break at most $b$, the inner radius of the model annulus depends only on $r,b$ and the residue characteristic $p$. At each radius, the model admits a basis, possibly depending on the radius, in which the connection matrix is bounded by a constant depending only on $r,b,p$. Both bounds are independent of the coefficient field, and these analytic models require no coefficient extension. A prescribed absolute Frobenius structure extends to such a bounded model with the same radius and connection bound. After a finite coefficient extension, we also realize quasi-constant modules, which carry the finite local monodromy, as differential direct summands of sums of pushforwards of rank-one modules, with uniformly bounded cover degrees and highest breaks of the inducing modules.
\end{abstract}
\maketitle
\enlargethispage{-12pt}

\setcounter{tocdepth}{1}
\tableofcontents

\section{Introduction}\label{sec:intro}
Overconvergent $F$-isocrystals play the role in $p$-adic cohomology of varieties over a finite field that lisse $\ell$-adic sheaves play in \'etale cohomology. On a smooth curve, the behavior of such an object near a missing point is recorded by its local monodromy: a differential module with Frobenius structure over a Robba ring. The $p$-adic local monodromy theorem, conjectured by Crew \cite{Cre98} and proved independently by Andr\'e \cite{And02}, Mebkhout \cite{Meb02}, and Kedlaya \cite{Ked04}, asserts that every such module is quasi-unipotent, in close analogy with Grothendieck's $\ell$-adic local monodromy theorem \cite[Appendix]{ST68}. Matsuda's decomposition \cite{Mat02}, in the form of \cite[Theorem~4.6.6]{Ked05}, writes a quasi-unipotent module as a direct sum of tensor products of irreducible quasi-constant modules and unipotent modules. The quasi-constant factors carry the finite local monodromy, while the unipotent factors have constant nilpotent connection matrices. We recall the precise statement in \cref{thm:matsuda}.

Every differential module over the Robba ring already descends to some annulus near the outer boundary. The question here is whether one annulus works for all objects with bounded rank and ramification. Qualitative descent gives no bound for the inner radius in terms of these data. Even the degree of a trivializing cover need not be bounded by these data: a tame rank-one module with exponent $1/n$, where $(n,p)=1$, has highest break zero and connection norm one, but trivializing it requires ramification index divisible by $n$; compare \cite[Remark~6.1.7]{Ked05}. We instead control the degrees of inducing covers while allowing the inducing rank-one modules to retain nontrivial monodromy. Auxiliary coefficient extensions are used in the construction, but their degrees do not enter the final analytic bounds.

\Cref{thm:A} supplies such an annulus, together with a radiuswise connection bound, over the original coefficient field. Its constants depend only on the rank, the highest ramification break, and the residue characteristic. For a module with a prescribed absolute Frobenius structure, \cref{thm:B} shows that a bounded model with the same parameters can be chosen to carry it. The witnessing bases may depend on the radius; see \cref{rem:globalbasis}. \Cref{subsec:constants} explains how the constants are obtained.

The structural input is a bounded rank-one realization, stated separately as \cref{thm:C}. After finite coefficient extension, quasi-constant modules are differential direct summands of sums of rank-one pushforwards. Both the total cover degree and the breaks of the inducing rank-one modules are controlled by the original rank and break. This packages the finite monodromy data into bounded-degree covers before the analytic estimates are applied.

A motivation comes from cut-by-curves criteria for overconvergence. Grubb--Kedlaya--Upton prove a criterion under a geometric tameness hypothesis and ask whether a bound on curvewise Swan conductors in terms of boundary intersections suffices \cite[Theorem~9.7 and Question~10.2]{GKU22}. For a local module of rank at most $r$, one has $b(M)\le\Sw(M)\le r\,b(M)$; the comparison with the differential Swan conductor is recalled in \cref{subsec:setup}. On a family of test curves with bounded boundary intersections, their proposed conductor bound therefore gives a common bound for the highest breaks. The local input supplied by \cref{thm:B} is a common annulus carrying the prescribed Frobenius and a uniform radiuswise connection bound, independently of the coefficient fields. Comparing these models across curves requires additional analytic compatibility and is left to future work.

For a module admitting Frobenius, \cite[Lemma~6.1.3]{Ked05} gives arbitrarily small connection matrices in suitable bases on closed subannuli near the outer boundary, but gives no bound for the starting radius in terms of $r,b,p$. Pulita's explicit rank-one classification and Frobenius constructions \cite{Pul07}, building on Matsuda's work \cite{Mat95}, provide the rank-one input used here. The present construction combines these models with bounded-degree induction, uniform pushforward estimates, and coefficient descent. The inducing-index bound avoids the wild-inertia order estimate of \cite[Proposition~2.3.9]{Ked05}, which fails in general; see \cref{rem:earliertameindex}.

\subsection{Setup and notation}\label{subsec:setup}

Let $K$ be a complete discretely valued field of characteristic zero with perfect residue field $k$ of characteristic $p>0$. We normalize its absolute value by $|p|=p^{-1}$. Write $\OO_K$ for its valuation ring and $\mathfrak m_K$ for the maximal ideal. The canonical Witt coefficient field
\[
 C_k=\operatorname{Frac}W(k)\subseteq K
\]
has $[K:C_k]=e(K)<\infty$, where $e(K)$ is the absolute ramification index of $K$. Annuli of arbitrary real radii are understood in the generalized affinoid sense, equivalently through their convergent Laurent-series algebras; see \cite[Remark~3.2.4]{Ked05}. For $0<s<1$, let
\[
 A_{K,t}[s,1)=\{s\le |t|<1\},\qquad
 \mathscr A_s=\Gamma(A_{K,t}[s,1),\OO),\qquad
 \mathcal{R}_{K,t}=\bigcup_{0<s<1}\mathscr A_s.
\]
The ring $\mathcal{R}_{K,t}$ is the Robba ring. When the coefficient field and coordinate are fixed, we abbreviate $A_{K,t}[s,1)$ to $A[s,1)$.

The bounded Robba ring $\mathcal{R}^{\mathrm{bd}}_{K,t}$ consists of Laurent series in $\mathcal{R}_{K,t}$ with bounded coefficients. Its valuation ring
\[
 \mathcal{R}^{\mathrm{int}}_{K,t}
 =\left\{\sum_{n\in\mathbb{Z}}a_nt^n\in\mathcal{R}_{K,t}:a_n\in\OO_K\right\}
\]
is henselian \cite[Proposition~3.5.5]{Ked05} and has residue field $k((\bar t))$. Finite separable extensions of this residue field therefore lift to finite \'etale extensions of $\mathcal{R}^{\mathrm{int}}_{K,t}$ and hence to finite \'etale extensions of $\mathcal{R}_{K,t}$.

Throughout, the covers of Robba rings used below are obtained from these finite \'etale integral lifts; this is what we mean by a \emph{formally \'etale cover}. On a sufficiently small outer annulus they are finite \'etale analytic maps. Pushforward means restriction of scalars with its induced connection.

A differential module is \emph{quasi-constant} if it becomes constant on such an extension. It is \emph{unipotent} if it becomes constant after adjoining $\log t$, and \emph{quasi-unipotent} if it becomes unipotent after a finite \'etale extension of this type.

\paragraph{Breaks.}
The local monodromy of a quasi-constant module gives a finite-image representation of inertia; we recall its construction in \cref{subsec:monodromy}.

For a quasi-unipotent module $M$, define $b(M)$ to be the maximum of the highest upper-numbering breaks of the inertia representations of its quasi-constant constituents in Matsuda's decomposition; see \cref{thm:matsuda,lem:coeffsplitting} in \cref{subsec:matsuda}. We set this maximum to zero if there is no positive break. For a representation $V$ with finite inertia image, including a character, $b(V)$ denotes its highest upper-numbering break with the same convention. Let $\QU(r,b)$ be the class of quasi-unipotent modules of rank at most $r$ and highest break at most $b$.

Since $k$ is perfect, the differential and monodromy break decompositions agree \cite[Theorem~5.4.10 and Corollary~5.4.11]{Ked05}. Thus the differential Swan conductor $\Sw(M)$ is the sum of the monodromy breaks counted with multiplicity, while $b(M)$ is their maximum. In particular, for a quasi-unipotent module of rank $d\le r$,
\[
 b(M)\le\Sw(M)\le d\,b(M)\le r\,b(M).
\]

\paragraph{Circle norms.}
Put $\del_t=t\,d/dt$. For $\rho\in(0,1)$, set
\[
 R_\rho=\left\{\sum_{n\in\mathbb{Z}}a_nt^n:
        |a_n|\rho^n\longrightarrow0\text{ as }n\longrightarrow\pm\infty\right\},
 \qquad
 \left|\sum_n a_nt^n\right|_\rho=\max_n|a_n|\rho^n.
\]
Thus $R_\rho=\Gamma(A_{K,t}[\rho,\rho],\OO)$, including when $\rho\notin|K^\times|^{\Q}$. For matrices, $|\cdot|_\rho$ denotes the maximum of the norms of the entries.

\paragraph{Models.}
A \emph{differential direct summand} is the image of a horizontal idempotent, or equivalently a differential submodule admitting a differential complement. We use vector bundles for the intrinsic notion of an analytic model and finite free modules when a global basis is chosen. Images of idempotents in these free ambient models are finite projective modules, viewed as vector bundles; no global basis of such an image is assumed.

\begin{definition}\label{def:model}
A \emph{model} of $M$ on $A_{K,t}[s,1)$ is a vector bundle with connection $\mathcal{E}$, together with an isomorphism
\[
 \Gamma(A_{K,t}[s,1),\mathcal{E})\otimes_{\mathscr A_s}\mathcal{R}_{K,t}\simeq M
\]
of differential modules. For $\rho\in[s,1)$, write
\[
 \mathcal{E}_\rho=\Gamma(A_{K,t}[s,1),\mathcal{E})\otimes_{\mathscr A_s}R_\rho
\]
for its restriction to the circle of radius $\rho$. The module $M$ is \emph{$(s,N)$-bounded} if it admits such a model and, for every $\rho\in[s,1)$, an $R_\rho$-basis $\ee_\rho$ of $\mathcal{E}_\rho$ in which the matrix $A_\rho$ of $\nabla_{\del_t}$ satisfies $|A_\rho|_\rho\le N$.
\end{definition}

The bases in this definition may depend on $\rho$.

\begin{definition}
A \emph{witness} for $(s,N)$-boundedness is a model $\mathcal{E}$ and a family $(\ee_\rho)_{s\le\rho<1}$ as in \cref{def:model}. A \emph{global-basis witness} is a model with a single global basis whose restrictions satisfy those bounds. A collection of modules is $(s,N)$-bounded if every member has such a witness.
\end{definition}

Radiuswise boundedness and boundedness with a global-basis witness are monotone: one may increase $s$ and $N$.

\subsection{Main results}
\begin{maintheorem}[Uniform differential models]\label{thm:A}
There are functions
\[
 s=s(r,b,p)\in(0,1),\qquad N=N(r,b,p)\ge1
\]
such that every $M\in\QU(r,b)$ over $\mathcal{R}_{K,t}$ is $(s,N)$-bounded over $K$. The constants are independent of the coefficient field $K$.
\end{maintheorem}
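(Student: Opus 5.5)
The plan is to reduce \cref{thm:A} to rank-one pushforwards via Matsuda's decomposition and the rank-one realization of \cref{thm:C}, bound everything uniformly after a finite coefficient extension $K'/K$, and then descend the model back to $K$.

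\emph{Step 1: splitting.} By Matsuda's decomposition (\cref{thm:matsuda}), after a finite coefficient extension $K'/K$ (\cref{lem:coeffsplitting}), $M\otimes K'$ is a direct sum of tensor products $Q\otimes U$. Here $Q$ is quasi-constant of rank $\le r$ with $b(Q)\le b$, and $U$ is unipotent of rank $\le r$. Tensor products and direct sums of $(s,N)$-bounded modules are $(s,N')$-bounded, with $N'$ depending only on $N$ and $r$. This holds because the connection matrix of a tensor product is $A\otimes 1+1\otimes B$, and block sums keep the norm. A unipotent $U$ has a global basis with constant nilpotent matrix. Rescaling the basis by powers of a fixed constant (e.g.\ conjugating a Jordan block by $\mathrm{diag}(1,c,c^2,\dots)$ with $c\in\Q$) makes the entries of norm $\le1$ on every annulus, with no dependence on $K$. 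So it suffices to treat quasi-constant $Q$.

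\emph{Step 2: rank one and pushforward.} By \cref{thm:C}, after a further finite extension, $Q$ is a differential direct summand of $\bigoplus_i f_{i*}L_i$. The covers $f_i$ are formally \'etale of total degree $\le D(r,b,p)$, and the $L_i$ have rank one with $b(L_i)\le b'(r,b,p)$. For each $L_i$, Pulita's explicit classification (Matsuda/Pulita exponentials together with a tame exponent part) gives a model on a uniform annulus $\{s_1\le|u|<1\}$ in the cover coordinate $u$. Its connection coefficient $\del_u$ is bounded by a constant depending only on $b',p$: the Artin--Hasse/$\pi$-exponential representatives have $\del$-matrix a polynomial in $u^{-1}$ of degree $\le b'$ with coefficients of bounded norm, and tame exponents have norm $\le1$. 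The key analytic input is a uniform pushforward estimate. A formally \'etale cover of degree $\le D$ with controlled ramification (upper breaks bounded via $b'$) is a finite \'etale analytic map over an outer annulus $A[s_2,1)$, where $s_2$ depends only on $D,b',p$. On each circle of radius $\rho$, the pushforward of a basis $e$ by a power basis $1,u,\dots$ suitably rescaled to be orthogonal for $|\cdot|_\rho$ has matrix norm bounded by a function of $D$ and the rank-one bound. I expect this step to be the main obstacle. The issue is that wild covers need not have a uniform radius of \'etaleness a priori. I would first bound the discriminant and Herbrand function of the residue extension in terms of $D$ and $b'$. Then I would use Krasner-type stability, via the henselian integral lift, to get $s_2$ together with a radius-dependent, norm-orthogonal basis of the pushforward; this orthogonal basis is why the bases $\ee_\rho$ are allowed to vary with $\rho$.

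\emph{Step 3: summands and descent.} A horizontal idempotent $\pi$ on $P=\bigoplus f_{i*}L_i$ extends to the model, because horizontal sections of $\End(P)$ over the Robba ring extend to wherever the model exists (dwork-transfer/the connection determines sections on the whole annulus). Its image $\mathcal E$ is a vector bundle. On each circle, the image of the bounded basis under $\pi$ and $1-\pi$ spans. Choosing an $R_\rho$-basis of $\pi P_\rho$ with controlled norm needs a bound on $|\pi|_\rho$. I would obtain it by averaging over the finite monodromy group, or via the fact that $\pi$ is horizontal in a bounded connection, which bounds its growth on the circle. Finally, to descend from $K'$ to $K$, take $\mathcal E_K$ to be the model of $M$ obtained as the Galois-invariant part (or via the trace projector $[K':K]^{-1}\mathrm{Tr}$, which is horizontal and of norm $1$ after choosing $K'/K$ so that Galois invariants apply; for wildly ramified coefficient extensions, use the restriction of scalars $M\otimes K'\supset M$ as a direct summand of $\mathrm{Res}_{K'/K}$). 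The constants are independent of $[K':K]$, because the circle norms are compatible with extension of scalars and an orthonormal basis over $K'$ descends up to the bounded projector. The resulting $s=\max(s_1,s_2)$ and $N$ depend only on $(r,b,p)$.
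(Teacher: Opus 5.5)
\textbf{Extending the projector.} Your reductions (Matsuda, unipotent factors, \cref{thm:C}, explicit rank-one models) match the paper's. The problem is Step~3, which asserts that horizontal sections of $\End(P)$ over $\mathcal{R}_{K,t}$ extend to the whole model annulus. This is false for a general differential model. Take $A[s,1)$ with $s\le p^{-2+1/(p-1)}$ and the rank-one model $\nabla_{\del_t}e=p^2t^{-1}e$. Its connection norm is at most $p^{-2}s^{-1}$, and it is constant over the Robba ring with horizontal generator $\exp(p^2t^{-1})e$. But this series diverges for $|t|\le p^{-2+1/(p-1)}$. Adding a trivial summand gives a horizontal idempotent of $P_{\mathcal{R}}$ with entry $\exp(-p^2t^{-1})$, and this idempotent does not extend.

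In the paper, extension comes from Dwork's trick (\cref{lem:endext}), which requires a Frobenius structure on the ambient model. The ambient model is therefore built to carry an \emph{auxiliary} Frobenius on the uniform annulus, even when $M$ has none. Producing it with uniform radius is a large part of the proof and is missing from your plan. It uses the rank-one Frobenius collar (\cref{lem:rankonefrob}), compatible nonstandard lifts through the wild tower (\cref{lem:onestepfrob,cor:towerfrob}), the change of lift at the top (\cref{lem:rankonechangelift}), and cartesianness on the collar (\cref{cor:cartesiancollar}), assembled in \cref{prop:wildfrob}. Since $K$ need not admit a Frobenius lift, the paper restricts scalars to $C_k=\operatorname{Frac}W(k)$, where Witt Frobenius exists, and only then extends to $K$ (\cref{cor:homextension,prop:ambientdescent}). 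Because $p$ is a uniformizer of $C_k$, this costs only a factor $p$ (\cref{lem:coefficientrestriction}).

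\textbf{Bounding the connection on the summand.} Step~3 also needs a bound on $|\pi|_\rho$ for your projector $\pi$, and neither proposed source gives one. Averaging over monodromy happens on horizontal sections over a trivializing cover, so it controls no circle norm on the model. Horizontality bounds nothing either: on the trivial rank-two model, the constant idempotents $\begin{pmatrix}1&c\\0&0\end{pmatrix}$ are horizontal with norm $\max\{1,|c|\}$, although they all have the same image. Likewise, $[K':K]^{-1}\operatorname{Tr}_{K'/K}$ has norm $p^k$ when $K'/K$ is unramified of degree $p^k$. So descending ``up to the bounded projector'' does not give constants independent of $K'$.

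The paper bounds no projector. At each radius it gives the image $\mathcal{M}_\rho\subseteq R_\rho^{r_E}$ the norm induced from the ambient sup norm and shows that it has an orthonormal basis (\cref{prop:orthonormal}). Since $\nabla_{\del_t}$ has operator norm at most $\max\{N,1\}$ on the ambient module, its matrix in that basis obeys the same bound (\cref{cor:connectionimage}). This is where the radius-dependent bases come from; the explicit pushforwards themselves have global-basis witnesses.

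Finally, Step~2 leaves the uniform wild collar as a plan. The paper obtains it by factoring the cover into explicit Sekiguchi--Oort--Suwa degree-$p$ steps with Teichm\"uller-lifted reduced equations. It proves the collar by Hensel's lemma in $\OO_K[[z]]$ (\cref{thm:collar}) and uses orthogonal power bases at each step.
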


For the second statement, suppose that $K$ has an isometric endomorphism $\sigma_K$ lifting the absolute $p$-power Frobenius of $k$, and use
\[
 \sigma\left(\sum_n a_nt^n\right)=\sum_n\sigma_K(a_n)t^{pn}.
\]
The coefficient lift $\sigma_K$ is automatically an automorphism: isometry sends a uniformizer to a uniformizer, and completeness and surjectivity on the residue field give surjectivity on $\OO_K$ by successive approximation. A Frobenius structure is a horizontal isomorphism $\Phi:\sigma^*M\xrightarrow{\sim}M$. By the $p$-adic local monodromy theorem \cite{Ked04,Ked22}, a module carrying such a structure is quasi-unipotent, so its highest break $b(M)$ is defined. The map $t\mapsto t^p$ sends $A_{K,t}[s^{1/p},1)$ into $A_{K,t}[s,1)$, so a Frobenius structure on a model is defined on this outer subannulus.

\begin{definition}\label{def:frobenius-model}
For $m\ge1$, a \emph{$(\sigma^m,\nabla)$-model} of a Robba module with a prescribed Frobenius structure for $\sigma^m$ is a differential model $\mathcal{E}$ on $A[s,1)$ together with a horizontal isomorphism
\[
 (\sigma^m)^*\mathcal{E}\xrightarrow{\sim}\mathcal{E}|_{A[s^{1/p^m},1)}
\]
inducing the prescribed structure.
\end{definition}

\begin{maintheorem}[Uniform Frobenius models]\label{thm:B}
Let $(M,\Phi)$ be a differential module of rank at most $r$ and highest break at most $b$ over $\mathcal{R}_{K,t}$, with a Frobenius structure for the absolute lift $\sigma$. Put
\[
 s=s(r,b,p),\qquad N=N(r,b,p),
\]
with the functions of \cref{thm:A}. There is an $(s,N)$-bounded $(\sigma,\nabla)$-model $\mathcal{E}$ of $(M,\Phi)$ over $K$ on $A_{K,t}[s,1)$.
\end{maintheorem}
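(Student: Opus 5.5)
Since $(M,\Phi)$ is quasi-unipotent by the local monodromy theorem, $M\in\QU(r,b)$, and \cref{thm:A} yields an $(s,N)$-bounded model $\mathcal{E}$ on $A[s,1)$ over $K$. The task is to show that $\Phi$ is defined on this model: we need $\Phi$ to map $(\sigma^*\mathcal{E})$ isomorphically onto $\mathcal{E}|_{A[s^{1/p},1)}$. Only the lattice over $\mathscr A_{s^{1/p}}$ changes, while the radiuswise bases and bounds are inherited. The plan is to use the uniqueness of analytic extensions of differential modules, that is, horizontal maps extend as far as both sides are defined. The key input is that a horizontal morphism between differential modules on an annulus $A[s',1)$, given over $\mathcal{R}_{K,t}$, extends to the whole annulus. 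I would prove this by viewing $\Phi$ as a horizontal section of the internal Hom module $\mathcal{H}=\mathcal{H}om(\sigma^*\mathcal{E},\mathcal{E})$ on $A[s^{1/p},1)$. Here $\sigma^*\mathcal{E}$ is the pullback along $t\mapsto t^p$ composed with $\sigma_K$, and it is a vector bundle with connection on $A[s^{1/p},1)$.

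\emph{Step 1 (extension of horizontal sections).} Let $v$ be a horizontal section of $\mathcal{H}\otimes\mathcal{R}_{K,t}$. It is defined on some $A[s'',1)$. The claim is that it extends to $A[s^{1/p},1)$. The standard argument uses that $\mathcal{H}$ is a vector bundle on an annulus, hence free, since $\mathscr A_{s'}$ is a Bézout domain (see \cite{Ked05}). In a basis, horizontality is the ODE $\delta_t v=-Av$ with analytic $A$ on the larger annulus. Local solutions exist on small discs around every rigid point of $A[s^{1/p},1)$, possibly after base change to a larger complete field to reach all radii. These local solutions agree with $v$ on overlaps by analytic continuation; the space of horizontal sections on a connected open is finite-dimensional and restriction is injective. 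A cleaner alternative: the transfer of horizontal sections to generic points along a chain of overlapping discs covering $[s^{1/p},s'']$ gives the extension. I would instead cite the known fact \cite[Proposition~3.4.? / Christol--Mebkhout]{Ked05} that restriction from $A[s',1)$ to the Robba ring is fully faithful on differential modules, and apply it to both $\Phi$ and $\Phi^{-1}$. Their compositions are the identity over $\mathcal{R}$, hence on the annulus by injectivity of restriction.

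\emph{Step 2 (assemble).} Applying full faithfulness on $A[s^{1/p},1)$ gives a horizontal isomorphism $\sigma^*\mathcal{E}\xrightarrow{\sim}\mathcal{E}|_{A[s^{1/p},1)}$ inducing $\Phi$. This is a $(\sigma,\nabla)$-model in the sense of \cref{def:frobenius-model}. Its underlying differential model is the $(s,N)$-bounded witness from \cref{thm:A}, with the same family $(\ee_\rho)$, so the model is $(s,N)$-bounded.

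\emph{Main obstacle.} The main obstacle is the full faithfulness in Step 1 for an arbitrary real $s$ and a non-algebraically-closed $K$. One must make sure the restriction functor from vector bundles with connection on $A[s^{1/p},1)$ to $\mathcal{R}_{K,t}$-modules is fully faithful, and not merely faithful. I would handle this by reducing to horizontal sections as above and using that injectivity of restriction and surjectivity are both compatible with completed base change $K\to L$. Surjectivity should follow from Galois-invariance or Hom-descent along $L/K$, so it suffices to treat $K$ spherically complete with all radii realized, where the chain-of-discs continuation argument applies.
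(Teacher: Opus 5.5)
Your Step~1 rests on a false claim. Restriction from differential modules on $A_{K,t}[s',1)$ to $\mathcal{R}_{K,t}$ is faithful but not full, so a horizontal map given over the Robba ring need not extend to the model annulus.

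For example, take $c\in K$ and $k\ge1$ with $|\pi|s^k<|c|<|\pi|$, and let $E$ be the rank-one model on $A_{K,t}[s,1)$ with $\nabla_{\delta_t}e=kct^{-k}e$. The series $f=\exp(ct^{-k})$ and $f^{-1}=\exp(-ct^{-k})$ converge exactly on $|t|>\rho_0=(|c|/|\pi|)^{1/k}$, and $s<\rho_0<1$. So $E_{\mathcal{R}}$ is trivial with horizontal basis $fe$. Its horizontal sections over $\mathcal{R}_{K,t}$ are $Kfe$, and no nonzero one extends to $A_{K,t}[s,1)$. Thus $E$ and the trivial model are non-isomorphic models of the same Robba module.

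Your continuation argument cannot detect this. A disc around a rigid point $x$ with $|x|=\rho$ that stays inside the annulus lies in the circle $|t|=\rho$, so no chain of discs joins different radii. Passing to a spherically complete field does not change this.

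The example also shows that the conclusion, not just the proof, fails for an arbitrary bounded model. The trivial Frobenius $\sigma^*(fe)\mapsto fe$ is a Frobenius structure on a module in $\QU(1,0)$. If it extended to $E$, then \cref{cor:homextension} would extend $fe$ to $A_{K,t}[s,1)$, which is impossible. Choose $k$ with $s^k\le|\pi|/p$ and $|c|$ a power of $p$ in the open interval $(|\pi|s^k,ps^k)$. Then the connection norm $|kc|\rho^{-k}$ stays below $p$, so such models are $(s,N)$-bounded whenever $N\ge p$. Hence you cannot start from whatever model the statement of \cref{thm:A} happens to provide.

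What is missing is a mechanism that propagates analyticity radially. In the paper this is an auxiliary Frobenius on an ambient model. The proof does not take an arbitrary bounded model. It takes $\mathcal{E}=\operatorname{im}e_M$ inside the model $E$ of \cref{prop:ambientdescent}, which carries an auxiliary structure for some power $\sigma^m$ of the prescribed lift. Since $E$ and $\sigma^*E$ both carry $\sigma^m$-structures, Dwork's bootstrap (\cref{lem:endext,cor:homextension}) applies. It extends the horizontal maps $F=\iota\Phi\,\sigma^*\operatorname{pr}$ and $G=\sigma^*\iota\,\Phi^{-1}\operatorname{pr}$ to $A_{K,t}[s^{1/p},1)$. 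The projector identities then restrict these to mutually inverse isomorphisms between $\sigma^*\mathcal{E}$ and $\mathcal{E}|_{A[s^{1/p},1)}$; this is \cref{prop:prescribedextension}.

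The auxiliary structure need not be compatible with $\Phi$ or with $e_M$. It cannot be replaced by $\Phi$ itself acting on $\mathcal{E}$, since that presupposes what is to be proved. Your Step~2 is correct: once $\Phi$ is defined on the model, the witnessing bases and the bound $N$ are inherited unchanged.
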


The following structural result is useful independently of the analytic estimates.

\begin{maintheorem}[Bounded rank-one pushforward realization]\label{thm:C}
Let $M$ be a quasi-constant differential module over $\mathcal{R}_{K,t}$ of rank at most $r$ and highest break at most $b$. After a finite coefficient extension $K'/K$, there are covers $f_j$ and rank-one quasi-constant modules $\LL_j$ on their sources such that $M_{K'}$ is a differential direct summand of
\[
 \bigoplus_j f_{j*}\LL_j,
 \qquad
 \sum_j\deg f_j\le r(p^{2r}-1),
 \qquad
 b(\LL_j)\le r(p^{2r}-1)b.
\]
If $M$ is absolutely irreducible, one rank-one pushforward suffices. Each cover factors as a wild map of degree $p^\nu$, itself a tower of degree-$p$ steps, followed by two tame maps, as in \cref{prop:induced}.
\end{maintheorem}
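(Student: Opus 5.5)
The plan is to pass to Galois representations and work group-theoretically. A quasi-constant module $M$ of rank $d\le r$ corresponds, via the local monodromy construction of \cref{subsec:monodromy}, to a representation $V$ with finite image $G$ of the inertia group of $k((\bar t))$, possibly after a finite coefficient extension $K'/K$. This extension is taken large enough to split $V$ into absolutely irreducible constituents and to contain the values of the characters we will induce from; \cref{lem:coeffsplitting} handles this. Since pushforward along a formally \'etale cover corresponds to induction from the corresponding open subgroup, and since direct sums behave additively in both the cover degrees and the break bounds, we may assume $V$ is irreducible of dimension $d\le r$. It then suffices to show that $V$ is a direct summand of, or equal to, $\Ind_H^G\chi$ for a subgroup $H$ of index at most $p^{2d}-1$ (or at most $d(p^{2d}-1)$) and a character $\chi$ of $H$ with $b(\chi)\le d(p^{2d}-1)b$. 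Summing over the at most $r$ constituents then gives the stated totals.

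The group-theoretic step uses the structure of inertia images. $G$ is an extension of a cyclic tame quotient of order prime to $p$ by a normal $p$-group $P$ (wild inertia). I restrict $V$ to $P$ and apply Clifford theory. Let $W$ be an irreducible constituent of $V|_P$ with stabilizer $T\supseteq P$. Then $V=\Ind_T^G V'$, where $V'|_P$ is isotypic, and $[G:T]\le d$. This tame index is the source of the first tame map in the factorization. Since $P$ is a $p$-group, $W$ is monomial: $W=\Ind_Q^P\psi$ with $[P:Q]=\dim W$, a power $p^\nu$ with $p^\nu\le d$. This is realized by a chain of index-$p$ subgroups, which gives the tower of degree-$p$ wild steps. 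The remaining difficulty is to extend the induction from $P$ up to $T$ while keeping the index bounded. I would take the stabilizer of $\psi$ in $N_T(Q)$. Because $T/P$ is cyclic of order prime to $p$, a Schur--Zassenhaus/Glauberman-type argument extends $\psi$ to a character of $Q\rtimes C$, where $C$ is the relevant cyclic complement. The price is a second tame index, which arises from the orbit of $\psi$ under a cyclic group acting on the character group of an elementary-abelian-like section. I would bound that orbit length by $p^{2d}-1$ through the order of an element of $\GL_{2d}(\mathbb F_p)$: a semisimple element of order prime to $p$ acting on an $\mathbb F_p$-space of dimension at most $2d$ has order at most $p^{2d}-1$. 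This is presumably where the exponent $2r$ comes from, since the Heisenberg-type quotient of $P$ acting faithfully on $V$ has Frattini quotient of rank at most $2\log_p d\le 2d$. I expect this step to be the main obstacle. It is exactly where the estimate of \cite[Proposition~2.3.9]{Ked05} fails (\cref{rem:earliertameindex}), so the bound has to be extracted from linear algebra over $\mathbb F_p$ rather than from the order of wild inertia.

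The break bound comes from Herbrand functions. If $f$ is a cover with ramification index $e_f\le\deg f$ and $\LL$ is a character of the subgroup, then the upper break of $\LL$ as a character of the subgroup is at most the image of $b(V)$ under the Herbrand function of the extension. In the worst case this image is bounded by $\deg f\cdot b$; for tame steps it is exactly multiplication by the ramification index. Composing the wild and tame steps therefore gives $b(\LL_j)\le\deg f_j\cdot b\le r(p^{2r}-1)b$. I also need $b(\chi)$ to be controlled at all, which holds because $\chi$ occurs in the restriction of $V$ to $H$ and breaks of restrictions transform by $\psi_{L/K}$.

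Finally, I translate back to modules. Direct summands of representations correspond to horizontal idempotents, the covers are the formally \'etale lifts of the corresponding extensions of $k((\bar t))$, and the rank-one modules $\LL_j$ are quasi-constant with the required breaks. In the absolutely irreducible case no splitting is needed and $V=\Ind_H^G\chi$ on the nose, so a single pushforward suffices.
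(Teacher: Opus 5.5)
\textbf{The gap: the tame index bound.} Your Clifford step reproduces the relative $M$-group reduction $V=\Ind_H^G\psi$ with $\psi|_P$ irreducible and $[G:H]$ prime to $p$, and your Herbrand estimate for a constituent of a restriction is correct. The unproved step is the tame index bound, which carries the whole content of the theorem. Write $H=P\rtimes T$ for your stabilizer of $W$, with $T$ a Schur--Zassenhaus complement, and put $T_0=\ker(T\to\Aut P)$.

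Your justification is that a ``Heisenberg-type quotient'' of $P$ has Frattini rank at most $2\log_p d$. This is not correct. $P$ is an arbitrary $p$-group with a faithful $d$-dimensional representation; for example, $(\mathbb{Z}/p)^d$ acting diagonally has Frattini rank $d$. More seriously, nothing in your sketch turns the coprime action of $T$ into an element of some $\GL_n(\mathbb{F}_p)$ whose order bounds the orbit of $(Q,\lambda)$. That orbit is an orbit on inducing pairs, and $T$ need not normalize $Q$; it is not an orbit on the character group of a section.

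Three ingredients are missing:
\begin{itemize}
\item $T_0$ centralizes $P$, so it fixes every inducing pair, and the orbit length is at most $[T:T_0]$.
\item Burnside's theorem says the kernel of $\Aut(P)\to\Aut(P/\operatorname{Frat}(P))$ is a $p$-group. Hence the cyclic $p'$-group $T/T_0$ embeds in $\GL_{d_P}(\mathbb{F}_p)$, where $d_P=\dim P/\operatorname{Frat}(P)$.
\item One has $d_P\le d+v_p(d!)\le 2d$. The faithful restriction $V|_P$ realizes $P$ as a group of monomial matrices; its diagonal part needs at most $d$ generators, and its permutation part is a $p$-subgroup of $S_d$.
\end{itemize}
Counting nonzero elements of $\mathbb{F}_p[A]$ for a generator $A$ then gives $[T:T_0]\le p^{2d}-1$.

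\textbf{Two further claims fail as stated.} First, you induce from $Q\rtimes C$ with $C$ the full stabilizer of $(Q,\lambda)$ rather than $T_0$. Then $C$ need not normalize any chain $Q\triangleleft Q_1\triangleleft\cdots\triangleleft P$, so the cover attached to $Q\rtimes C\subseteq P\rtimes C$ need not be a tower of degree-$p$ steps. This contradicts the last assertion of the theorem. For a concrete case, take $p=2$, $P=2^{1+4}_+$, and a maximal elementary abelian subgroup $Q\supseteq Z(P)$. Take an automorphism of order $3$ that stabilizes $Q$ and acts irreducibly on $P/Q\cong\mathbb{F}_2^2$. It fixes one of the four characters of $Q$ extending the faithful central character. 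Then $Q\rtimes C_3$ is maximal of index $4$ in $P\rtimes C_3$, so the degree-$4$ cover has no intermediate cover. The paper uses $T_0$ because it centralizes $P$ and acts on $\psi$ by a scalar $\lambda_0$ (Schur's lemma), so the groups $Q_i\times T_0$ give the required tower.

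Second, your claim that $V=\Ind_H^G\chi$ holds exactly in the absolutely irreducible case is false in general. For $p=2$, the faithful two-dimensional representation of $Q_8\rtimes C_3\cong\mathrm{SL}_2(\mathbb{F}_3)$ is not induced from a linear character, since this group has no subgroup of index $2$. The theorem claims only a direct summand, which is what Frobenius reciprocity gives.
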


No bound on $[K':K]$ is claimed or needed in \cref{thm:C}. The degree bounds concern the covers in the curve variable, and hence the rank of the ambient pushforward.

\subsection{Strategy of proof}\label{sec:strategy}
The proof of \cref{thm:A} constructs a bounded ambient model carrying an auxiliary Frobenius. Dwork's argument then extends the horizontal projector onto the given module, even when that module has no prescribed Frobenius. The main quantitative ingredients are the bounded inducing index in \cref{lem:tameindex}, the uniform wild collars and compatible Frobenius construction in \cref{thm:collar,prop:wildfrob}, and the coefficient-restriction estimate in \cref{lem:coefficientrestriction}, whose cost is independent of the extension degrees. These are combined with Matsuda's decomposition, Dwork's extension argument, and orthonormal bases on circles as follows.
\begin{enumerate}
\item Matsuda's decomposition reduces the general case to quasi-constant modules and unipotent factors; see \cref{subsec:matsuda}. The latter have constant nilpotent connection matrices, and direct sums and tensor products combine the models.
\item Finite inertia representations realize absolutely irreducible quasi-constant modules, after coefficient extension, as differential summands of rank-one pushforwards, with cover degrees and ramification bounded in terms of $r,b,p$; see \cref{sec:induction}.
\item Pulita's rank-one normal forms and explicit tame and wild pushforwards give a global-basis connection witness carrying an auxiliary Frobenius on a uniformly sized annulus; see \cref{sec:rankone,sec:push}. The compatible lifts and Taylor comparison in \cref{prop:wildfrob} provide the wild part of this structure.
\item Restriction of scalars to $C_k=\operatorname{Frac}W(k)$, followed by extension of scalars to $K$, gives a bounded ambient model containing the original module as a differential direct summand. The coefficient norm comparison costs at most a factor $p$, independently of the extension degrees; see \cref{subsec:descent}.
\item Dwork's argument extends the horizontal projector, and orthonormal bases on circles transfer the connection bound to its image; see \cref{sec:dwork,subsec:mainproofs}.
\end{enumerate}
For \cref{thm:B}, the bounded ambient model carries an auxiliary Frobenius for a power of the prescribed lift. The extension of horizontal maps then extends the prescribed Frobenius and its inverse to the extracted model, with the same radius and connection bound; see \cref{prop:prescribedextension,subsec:mainproofs}.

After the rank-one realization in \cref{sec:induction}, \cref{sec:frob} gives the Frobenius conventions and finite \'etale base-change statements used in \cref{sec:rankone,sec:push}. We prove the extension and circle-basis results in \cref{sec:dwork}, then carry out Matsuda reduction and the final assembly in \cref{sec:assembly}.

\begingroup
\renewcommand{\addcontentsline}[3]{}
\section*{Acknowledgements}
This work forms part of the author's doctoral thesis at UC San Diego.
The author thanks their advisor, Kiran Kedlaya, for suggesting this problem and for many insights into its solution. The author also thanks Andrea Pulita for detailed discussions of \cref{sec:rankone,sec:push}.

\section*{Declaration of AI use}
The project and its original proof strategy were developed by the author between 2023 and 2026, and the author wrote the manuscript. Large language models (Claude Opus 5.5 and GPT6-Sol) were used to revise the wording of the author's draft for clarity and style, and to review mathematical arguments. This review identified an error in \cref{lem:rankonechangelift} and suggested a correction. It also identified a gap in an earlier proof of \cref{lem:tameindex} and suggested the Frattini-quotient argument using Burnside's theorem employed there. The author has checked all statements and proofs and takes full responsibility for the content of the paper.
\endgroup

\section{Induction from rank one}\label{sec:induction}
We realize quasi-constant modules as differential summands of rank-one pushforwards with bounded cover degrees and ramification. These bounds provide the inducing data for the analytic construction.
\subsection{Monodromy representations and splitting}\label{subsec:monodromy}
Write $\mathcal{R}=\mathcal{R}_{K,t}$. Let $S/\mathcal{R}$ be a finite Galois formally \'etale cover with group $G$ and constant field $K'=H^0_\nabla(S)$. For a differential module $M$ trivialized by $S$, put
\[
 V_S(M)=H^0_\nabla(S\otimes_{\mathcal{R}}M),\qquad
 \dim_{K'}V_S(M)=\rk(M).
\]
The natural action of $G$ on $V_S(M)$ is semilinear: $g(av)=g(a)g(v)$ for $a\in K'$. This construction gives an equivalence with finite-dimensional semilinear $G$-representations over $K'$, with inverse
\[
 V\longmapsto(S\otimes_{K'}V)^G,
\]
where the connection acts on the $S$ factor; see \cite[Lemma~4.3.2]{Ked05}. Inertia fixes $K'$, so its action is linear.

Under this correspondence, rank becomes dimension and differential direct summands become representation-theoretic direct summands. Pullback along an intermediate cover corresponds to restriction, and pushforward corresponds to induction. These operations are semilinear for $G$ and become the usual linear operations on inertia.

\begin{lemma}[Absolute splitting of quasi-constant modules]\label{lem:absolutesplitting}
Let $M$ be a quasi-constant differential module over $\mathcal{R}_{K,t}$ of rank at most $r$ and highest break at most $b$. There is a finite coefficient extension $K'/K$ such that
\[
 M_{K'}\simeq\bigoplus_i \mathcal{F}_i,
\]
where the $\mathcal{F}_i$ are absolutely irreducible quasi-constant modules satisfying
\[
 b(\mathcal{F}_i)\le b,\qquad \sum_i\rk(\mathcal{F}_i)\le r.
\]
\end{lemma}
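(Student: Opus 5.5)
We will work entirely on the representation side of the equivalence recalled in \cref{subsec:monodromy}. First choose a finite Galois formally \'etale cover $S/\mathcal{R}$ with group $G$ and constant field $K_S=H^0_\nabla(S)$ that trivializes $M$; this exists because $M$ is quasi-constant, and we may enlarge $S$ to a Galois closure. Then $V=V_S(M)$ is a semilinear $G$-representation over $K_S$ of dimension $\rk(M)\le r$, and the inertia subgroup $I\subseteq G$ acts $K_S$-linearly.

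Next we extend coefficients so that the representation theory becomes split. Let $K'$ be a finite extension of $K$ containing $K_S$ together with a splitting field for the finite group $I$, for instance enough roots of unity of order $|I|$. Base change along $K\to K'$ gives $\mathcal{R}_{K',t}$ and $S'=S\otimes_{\mathcal{R}_{K,t}}\mathcal{R}_{K',t}$. After passing to a connected component of $S'$, we obtain a Galois cover of $\mathcal{R}_{K',t}$ whose constant field is $K'$ itself, so the residual (non-inertia) part of the group acts trivially on constants. The group of this component is then essentially the inertia group: the decomposition group modulo inertia is absorbed into $K'$, because the residue field extension becomes trivial after enlarging the constants. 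Thus $V_{S'}(M_{K'})$ is a linear representation over $K'$ of a finite group, which we will still denote $I$. Since $K'$ has characteristic zero and splits $I$, Maschke's theorem and the splitting hypothesis decompose $V_{S'}(M_{K'})$ as a direct sum of absolutely irreducible $K'$-representations $W_i$. Transporting this decomposition back through the inverse functor $W\mapsto(S'\otimes_{K'}W)^{I}$ produces $M_{K'}\simeq\bigoplus_i\mathcal{F}_i$, where each $\mathcal{F}_i$ is quasi-constant (it is trivialized by $S'$) and absolutely irreducible, because its representation remains irreducible after any further coefficient extension. The rank identity $\sum_i\rk(\mathcal{F}_i)=\rk(M)\le r$ follows since the equivalence turns rank into dimension.

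For the breaks, observe that the inertia representation of each $\mathcal{F}_i$ is a direct summand of that of $M$, after identifying the inertia of $S'$ with a subgroup of the inertia of $S$; the two agree because coefficient extension is unramified in the variable $t$ and does not change the upper numbering. The highest upper break of a summand is at most that of the whole representation, since a ramification subgroup acting trivially on $V$ acts trivially on every summand. This gives $b(\mathcal{F}_i)\le b(M)\le b$. The step requiring the most care is this compatibility of inertia and upper numbering under coefficient extension, together with the reduction from semilinear $G$-actions to linear inertia actions. I would handle it by noting that $\mathcal{R}^{\mathrm{int}}_{K',t}$ has residue field $k'((\bar t))$, with $k'$ the residue field of $K'$, and that upper-numbering breaks are invariant under unramified base change of the residue field. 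Alternatively, one can use the identification of the monodromy breaks with the differential breaks recalled in \cref{subsec:setup}, since the differential breaks are visibly stable under coefficient extension.
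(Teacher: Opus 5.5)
Your proof is correct, but it takes a different route from the paper. You pass to a trivializing Galois cover and enlarge the constants so that $K'\supseteq K_S$. Then $S\otimes_KK'$ splits, via $K_S\otimes_KK'\cong\prod K'$ over the embeddings of the unramified extension $K_S/K$, into components that are Galois over $\mathcal{R}_{K',t}$. Their group is exactly the kernel $I$ of $G\to\operatorname{Gal}(K_S/K)$; this is the precise content of your phrase that the non-inertia part is ``absorbed into $K'$.'' You then split the resulting linear $I$-representation over a splitting field for the finite group $I$, for instance one containing $\mu_{|I|}$ by Brauer's theorem. The paper never chooses a cover. It takes the finite-dimensional semisimple algebra $\mathcal A=H^0_\nabla(\End M)$ and picks a splitting field $K'$ for $\mathcal A$ itself. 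Using $H^0_\nabla(\End M_{K'})\cong\mathcal A\otimes_KK'$, it cuts $M_{K'}$ by primitive idempotents. It gets absolute irreducibility from $H^0_\nabla(\End\mathcal{F}_{i,K''})\cong K''$ together with semisimplicity. The paper's argument is more intrinsic. It avoids the semilinear-to-linear reduction and the identification of the component's group, and it disposes of breaks by noting that coefficient extension does not raise them. Your argument gives an explicit $K'$, such as $K_S(\mu_{|I|})$. It also exhibits the inertia representations of the $\mathcal{F}_i$ directly as summands of that of $M$. This makes $b(\mathcal{F}_i)\le b$ transparent once you verify, as you indicate, that the upper-numbering filtration on $I$ is unchanged under the unramified residue extension $k((\bar t))\to k'((\bar t))$. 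Both arguments use the same standard input, in different guises. For you, differential submodules of a module trivialized by a cover are again trivialized by it, so irreducibility can be read off the representation. For the paper, every differential submodule of a quasi-constant module has a differential complement.
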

\begin{proof}
By the monodromy correspondence and the semilinear Maschke theorem \cite[Section~4.2]{Ked05}, quasi-constant modules are semisimple and remain so after finite coefficient extension. The horizontal endomorphism algebra $\mathcal A=H^0_\nabla(\End M)$ is a finite-dimensional semisimple $K$-algebra; choose a finite splitting field $K'/K$ for it.

Horizontal sections commute with finite coefficient extension, by expanding in a constant basis, so
\[
 H^0_\nabla(\End M_{K'})\cong\mathcal A\otimes_KK'.
\]
Let $(e_i)$ be a complete set of orthogonal primitive idempotents of this split algebra, and put $\mathcal{F}_i=e_iM_{K'}$. Then $M_{K'}=\bigoplus_i\mathcal{F}_i$ and
\[
 H^0_\nabla(\End \mathcal{F}_i)
 \cong e_i(\mathcal A\otimes_KK')e_i=K'e_i\cong K'.
\]
By the same commutation, $H^0_\nabla(\End \mathcal{F}_{i,K''})\cong K''$ for every finite extension $K''/K'$. Since $\mathcal{F}_{i,K''}$ is semisimple, a nontrivial decomposition would give a nontrivial horizontal idempotent; hence each $\mathcal{F}_i$ is absolutely irreducible. Ranks add under direct sums, and coefficient extension does not increase the highest breaks, so
\[
 \sum_i\rk(\mathcal{F}_i)=\rk(M)\le r,\qquad b(\mathcal{F}_i)\le b.\qedhere
\]
\end{proof}

\begin{remark}[Finite realization of local monodromy]\label{rem:finiterealization}
Let $M$ be quasi-constant. Choose a finite Galois trivializing residue extension $E/k((t))$. Its maximal unramified subextension is $\ell((t))$ for a finite extension $\ell/k$, and $E/\ell((t))$ is totally ramified; see \cite[Chapter~IV]{SerreLF}. After the corresponding unramified coefficient extension, restrict the action on horizontal sections to $I=\operatorname{Gal}(E/\ell((t)))$. This group fixes the constants, so its action is linear. Replacing $E$ by the fixed field of the kernel makes the action faithful; hence its highest group break equals the module break, $b(I)=b(M)$. A further finite coefficient extension may be used to split the representation.
\end{remark}

For a finite totally ramified Galois residue extension $E/F$ with group $I$, a subgroup $H\le I$ corresponds to the cover with residue extension $E^H/F$. We call this cover the map associated with the inclusion $H\le I$.

\subsection{Ramification under subgroups and induction}
The inducing characters and intermediate wild steps arise from subgroups of inertia. The following estimate controls their breaks by the subgroup index.
Let $E/F$ be a finite totally ramified Galois extension with group $I$, and let $H\le I$. Lower ramification groups satisfy $H_x=H\cap I_x$ \cite[Chapter~IV, Section~1]{SerreLF}. Write
\[
 \varphi_G(y)=\int_0^y[G:G_x]^{-1}\,dx
\]
for the Herbrand function. For a Galois group $G$ equipped with this ramification filtration, $b(G)$ denotes its highest upper-numbering break, with $b(G)=0$ when there is no positive break.

\begin{lemma}\label{lem:subgroupbreak}
If $[I:H]=n_H$, then $b(H)\le n_H\,b(I)$. Every representation of $H$ has highest break at most $b(H)$.
\end{lemma}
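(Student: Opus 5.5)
We compare the lower and upper numbering filtrations of $H$ and $I$ through their Herbrand functions. By the convention $H_x=H\cap I_x$ for lower numbering, the highest lower break of $H$ is at most the highest lower break $\lambda$ of $I$: if $I_x=1$ then $H_x=1$. Write $\lambda_H\le\lambda$ for the highest lower break of $H$. Then
\[
 b(H)=\varphi_H(\lambda_H)\le\varphi_H(\lambda),\qquad b(I)=\varphi_I(\lambda),
\]
since Herbrand functions are increasing. If $I$ has no positive lower break, then neither does $H$, and both sides of the claimed inequality vanish. So the task reduces to comparing the integrals $\varphi_H(\lambda)$ and $\varphi_I(\lambda)$.

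The key pointwise estimate is the following. For every $x\ge0$ the map $H/H_x\to I/I_x$ induced by inclusion is injective, because $H_x=H\cap I_x$. Hence
\[
 [H:H_x]\ge \frac{[H:H_x]}{1}\quad\text{and}\quad [I:I_x]=[I:HI_x]\,[HI_x:I_x]\le n_H\,[H:H_x].
\]
Here $[HI_x:I_x]=[H:H\cap I_x]=[H:H_x]$; the set $HI_x$ is a subgroup because $I_x$ is normal in $I$. Therefore
\[
 [H:H_x]^{-1}\le n_H\,[I:I_x]^{-1}.
\]
Integrating from $0$ to $\lambda$ gives $\varphi_H(\lambda)\le n_H\,\varphi_I(\lambda)=n_H\,b(I)$, and hence $b(H)\le n_H\,b(I)$.

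For the second statement, let $W$ be a representation of $H$ with finite image, viewed through $H$ and its ramification filtration. Its highest upper break is the largest $u$ such that the upper ramification subgroup $H^u$ acts nontrivially. For $u>b(H)$ we have $H^u=1$, so $H^u$ acts trivially on $W$, and the break of $W$ is at most $b(H)$. The only subtle point is bookkeeping of conventions. We must check that the upper numbering on $H$ used for representations (via the extension $E/E^H$) is the one given by $\varphi_H$ applied to the induced lower filtration $H_x=H\cap I_x$. This is exactly the compatibility of lower numbering with subgroups from \cite[Chapter~IV, Section~1]{SerreLF}, applied to the Galois extension $E/E^H$, so this step requires only stating it carefully. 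I expect the main obstacle, if any, is this convention check, together with the degenerate case of no positive break; the inequality itself is the elementary index comparison above.
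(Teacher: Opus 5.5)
Your proof is correct and follows the paper's argument: the index comparison $[I:I_x]\le n_H[H:H_x]$ (you obtain it through the subgroup $HI_x$, while the paper uses $[I:I_x]\le[I:H_x]$), integration of the Herbrand functions, and the observation that the last lower break of $H$ does not exceed that of $I$. For the second claim, you characterize the break of a representation by the action of $H^u$. This is equivalent to the paper's appeal to compatibility of upper numbering with quotients.
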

\begin{proof}
For $x\ge0$,
\[
 [I:I_x]\le[I:H_x]=n_H[H:H_x],
\]
so $\varphi_H(y)\le n_H\varphi_I(y)$. The last lower break of $H$ is no larger than that of $I$. Applying these two observations gives $b(H)\le n_H b(I)$. A representation factors through a quotient of $H$, and upper numbering is compatible with quotients \cite[Chapter~IV, Section~3]{SerreLF}.
\end{proof}

The next formula will track the highest break at each wild pushforward step in \cref{prop:wild}.
\begin{lemma}\label{lem:inductionbreak}
Let $L/F$ be a totally ramified cyclic extension of degree $p$ and break $m$, and let $V$ be a nonzero representation of $G_L$ with finite inertia image. Put $B=b(V)$ and $B'=b(\Ind_{G_L}^{G_F}V)$. Then
\[
 B' =\max\left\{m,\frac{B+(p-1)m}{p}\right\}.
\]
\end{lemma}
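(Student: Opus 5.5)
We compute the highest upper break of the induced representation through its kernel, using Herbrand functions for the degree-$p$ step $L/F$. Replace $V$ by a representation of the inertia of $L$ with finite image. Let $E/F$ be a finite Galois extension containing $L$ through which both $V$ and $\Ind V$ factor; for instance, take the Galois closure over $F$ of the fixed field of $\ker V$. Put $G=\operatorname{Gal}(E/F)$ and $H=\operatorname{Gal}(E/L)$, so $[G:H]=p$. The kernel of $\Ind_H^G V$ is the normal core $\bigcap_g gNg^{-1}$, where $N=\ker V$. Hence $b(\Ind V)$ is the largest $u$ with $G^u\not\subseteq\bigcap_g gNg^{-1}$; since $G^u$ is normal, this is the largest $u$ with $G^u\not\subseteq N$. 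The question is thus the upper break of $V$ measured in the $F$-numbering instead of the $L$-numbering.

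We need two inputs. The first is transitivity of upper numbering, $G^u\cap H=H^{\psi_{L/F}(u)}$ (Serre IV), with
\[
\psi_{L/F}(u)=\begin{cases}u,& u\le m,\\ m+p(u-m),& u\ge m.\end{cases}
\]
This follows because $L/F$ is cyclic of degree $p$ with a single lower break, which equals its upper break $m$. The second is that $G^u\subseteq H$ if and only if $u>m$, since $G^u H/H=\operatorname{Gal}(L/F)^u$ is nontrivial exactly when $u\le m$.

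We then split into cases. For $u>m$ we have $G^u\subseteq H$, so $G^u=H^{\psi(u)}$, and $G^u\not\subseteq N$ if and only if $\psi(u)\le B$, that is, $m+p(u-m)\le B$, that is, $u\le (B+(p-1)m)/p$. For $u\le m$ we have $G^u\not\subseteq H\supseteq N$, so the induced representation is always nontrivial there; this is where $V\ne 0$ is used. Taking the supremum over both ranges gives
\[
B'=\max\left\{m,\tfrac{B+(p-1)m}{p}\right\}.
\]
When $B\le m$ the second term is at most $m$, so the formula is consistent. We use the convention that breaks are suprema of jumps; with the left-continuous convention $G^u=G^{u+\epsilon}$ no strict-versus-weak boundary issue arises. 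If $V$ is trivial on wild inertia, we set $B=0$, and the formula still holds, since the tame part contributes break $0$ and $m\ge0$.

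The main obstacle is bookkeeping the convention for highest breaks (supremum of $u$ with $G^u$ acting nontrivially) together with the correct form of $\psi_{L/F}$. In particular we must confirm that the lower and upper breaks of $L/F$ coincide and equal $m$, and that Herbrand's transitivity is applied to the subgroup $H$ with the upper numbering of $E/L$. I would first verify the identity $G^u\cap H=H^{\psi_{L/F}(u)}$ directly from $\varphi_{E/F}=\varphi_{L/F}\circ\varphi_{E/L}$ on lower numbering, since this is the step where an error would shift the formula.
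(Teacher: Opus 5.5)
Your argument is correct and is essentially the paper's proof: both split at the break $m$, use that $G^u$ lies in the inducing subgroup exactly for $u>m$ and convert the $L$-numbering to the $F$-numbering there through the Herbrand function of $L/F$, and use the nontrivial action on cosets for $u\le m$. Working with $\ker(\Ind V)=\bigcap_g gNg^{-1}$ and the normality of $G^u$, rather than the paper's decomposition of the restriction into conjugates of $V$, is only a cosmetic difference. One small slip: Serre's convention, which your case analysis actually uses, is $G^u=G^{u-\epsilon}$, not $G^{u+\epsilon}$, but this does not affect the suprema.
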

\begin{proof}
The Herbrand function of the degree-$p$ extension is $\varphi(u)=u$ for $u\le m$ and $\varphi(u)=m+(u-m)/p$ for $u\ge m$; see \cite[Chapter~IV, Section~3]{SerreLF}. Above the break $m$, the ramification group lies in the inducing subgroup; the restriction of the induced representation is a direct sum of conjugates, so its last break there is $\varphi(B)$ if $B>m$. Up to the break $m$, the nontrivial action on cosets forces nontrivial action on the induced representation. Consequently its last break is $\max\{m,\varphi(B)\}$, which is the stated formula.
\end{proof}

\subsection{An induced ambient representation}
The character-theoretic arguments below are carried out over an algebraically closed field of characteristic zero; the finitely many representations and intertwiners involved are then realized over a finite splitting coefficient extension. For a finite group $G$, write $\Irr(G)$ for its irreducible characters, and use $\Ind_H^G$ for induction.

We use Isaacs's relative $M$-group theorem \cite[Definition~6.21 and Theorem~6.22]{Isaacs} and monomiality of irreducible representations of finite $p$-groups \cite[Chapter~6]{Isaacs}. We also use Schur's lemma \cite{SerreRep}.

To prove \cref{thm:C}, we seek a rank-one induced representation containing the given irreducible representation as a direct summand. Bounding its inducing index will bound both the cover degree and the break of the inducing character.
Let $\theta:I\to\GL_d$ be faithful and irreducible, where $I$ is a finite inertia group and $P$ is its wild inertia subgroup. The quotient $I/P$ is cyclic of order prime to $p$. In particular, it is supersolvable, so \cite[Theorem~6.22]{Isaacs} implies that $I$ is a relative $M$-group with respect to $P$. By \cite[Definition~6.21]{Isaacs}, there are a subgroup $P\le H\le I$ and an irreducible character $\psi\in\Irr(H)$ such that
\[
 \theta=\Ind_H^I\psi,\qquad \psi|_P\in\Irr(P).
\]
By the Schur--Zassenhaus theorem \cite[Theorem~1]{ConSZ}, choose a complement $T$ with $H=P\rtimes T$, and set
\[
 T_0=\ker(T\longrightarrow\Aut P),\qquad H_0=P\times T_0.
\]
Since $P$ is monomial, there are $Q\le P$ and a linear character $\lambda$ of $Q$ such that $\psi|_P=\Ind_Q^P\lambda$. Schur's lemma shows that $T_0$ acts on $\psi$ through a scalar character $\lambda_0$. Hence
\begin{equation}\label{eq:restriction}
 \psi|_{H_0}=\psi|_P\boxtimes\lambda_0
 =\Ind_{Q\times T_0}^{H_0}(\lambda\boxtimes\lambda_0).
\end{equation}
It follows by induction in stages and Frobenius reciprocity that $\psi$ is a direct summand of $\Ind_{Q\times T_0}^H(\lambda\boxtimes\lambda_0)$, and $\theta$ is a direct summand after induction to $I$.

\begin{lemma}\label{lem:degrees}
Writing $d=d'p^{v_p(d)}$ with $(d',p)=1$, the three maps associated to
\[
 Q\times T_0\subseteq H_0\subseteq H\subseteq I
\]
have degrees $p^{v_p(d)}$, $[T:T_0]$, and $d'$, respectively.
\end{lemma}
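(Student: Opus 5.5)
The degree of the map associated to an inclusion of subgroups is the index of that inclusion. So I will compute the three indices $[H_0:Q\times T_0]$, $[H:H_0]$ and $[I:H]$ from the construction preceding the statement.

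\emph{Last inclusion.} Since $\theta=\Ind_H^I\psi$, we have $d=[I:H]\psi(1)$. Because $\psi|_P$ is irreducible and $P$ is a $p$-group, $\psi(1)$ is a power of $p$. On the other hand, $P\le H$, so $[I:H]$ divides $[I:P]$, which is prime to $p$. Comparing $p$-parts and prime-to-$p$ parts of $d$ gives
\[
 \psi(1)=p^{v_p(d)},\qquad [I:H]=d'.
\]

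\emph{Middle inclusion.} We have $H=P\rtimes T$ and $H_0=P\times T_0$, and $T\cap P=1$. Hence
\[
 [H:H_0]=\frac{|P|\,|T|}{|P|\,|T_0|}=[T:T_0].
\]

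\emph{First inclusion.} We have
\[
 [H_0:Q\times T_0]=[P\times T_0:Q\times T_0]=[P:Q].
\]
Since $\psi|_P=\Ind_Q^P\lambda$ with $\lambda$ linear, $[P:Q]=\psi(1)=p^{v_p(d)}$.

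The only step needing care is the splitting of $d$ into its $p$-part and prime-to-$p$ part, which relies on $P\le H$ and on $I/P$ having order prime to $p$. The rest is direct bookkeeping of indices.
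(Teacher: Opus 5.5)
Your proposal is correct and follows the paper's proof: both split $d=[I:H]\psi(1)$ into its $p$-part and prime-to-$p$ part, using that $\psi(1)=\psi|_P(1)$ is a power of $p$ and that $[I:H]$ divides $[I:P]$, which is prime to $p$. Your explicit computations $[H:H_0]=[T:T_0]$ and $[H_0:Q\times T_0]=[P:Q]=\psi(1)$ fill in the two indices the paper says ``follow immediately.''
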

\begin{proof}
The degree of the irreducible $P$-representation $\psi|_P$ is a power of $p$. Since $d=[I:H]\dim\psi$ and $[I:H]$ is prime to $p$, that degree is $p^{v_p(d)}$. The other two indices follow immediately.
\end{proof}

\begin{lemma}\label{lem:tameindex}
One has $[T:T_0]\le p^{d+v_p(d!)}-1\le p^{2d}-1$.
\end{lemma}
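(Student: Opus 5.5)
The plan is to bound $[T:T_0]$ by embedding $T/T_0$ in the automorphism group of the Frattini quotient of $P$, and then bounding the number of generators of $P$ through the faithful representation $\theta$.

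\textbf{Step 1: $T/T_0$ acts faithfully on the Frattini quotient.} By definition of $T_0$, the cyclic group $T/T_0$ acts faithfully on $P$ by automorphisms, and its order is prime to $p$ because $T\simeq H/P$ embeds in $I/P$. Burnside's theorem states that a $p'$-automorphism of a finite $p$-group acting trivially on $P/\Phi(P)$ is trivial. Hence $T/T_0$ embeds in $\GL(P/\Phi(P))\cong\GL_n(\mathbb F_p)$, where $n=d(P)$ is the minimal number of generators of $P$.

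\textbf{Step 2: order of a cyclic $p'$-subgroup of $\GL_n(\mathbb F_p)$.} A generator $g$ of such a subgroup is semisimple. So $\mathbb F_p^n$ decomposes as a sum of $g$-stable irreducible pieces of dimensions $n_i$ with $\sum n_i\le n$. On each piece $g$ acts through $\mathbb F_{p^{n_i}}^\times$. Therefore the order of $g$ divides
\[
\operatorname{lcm}_i\,(p^{n_i}-1)\le\prod_i(p^{n_i}-1)\le p^{n}-1 .
\]
This gives $[T:T_0]\le p^{d(P)}-1$.

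\textbf{Step 3: bound $d(P)\le d+v_p(d!)$.} Since $\theta$ is faithful, $P$ embeds in $\GL_d$. Decompose $\theta|_P$ into irreducibles, each of which is monomial because $P$ is a $p$-group. After conjugation, $P$ then lies in the monomial group $D\rtimes S_d$, with $D$ the diagonal matrices of $p$-power roots of unity. Let $\pi$ be the projection to $S_d$. Its image $\pi(P)$ is a $p$-group, hence lies in a Sylow $p$-subgroup of $S_d$, of order $p^{v_p(d!)}$. A $p$-group of order $p^m$ has $d\le m$, so $\pi(P)$ needs at most $v_p(d!)$ generators. The kernel $P\cap D$ is a finite subgroup of $(\mu_{p^\infty})^d$, hence abelian of rank at most $d$. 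Using $d(P)\le d(P\cap D)+d(P/(P\cap D))$, we get
\[
d(P)\le d+v_p(d!),
\]
which is the first inequality. The second follows from Legendre's formula $v_p(d!)=(d-s_p(d))/(p-1)\le d-1<d$, where $s_p(d)$ is the base-$p$ digit sum.

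\textbf{Main obstacle.} I expect the main care to be needed in Step 3, in making the monomial embedding of $P$ rigorous and the generator count valid. The subadditivity $d(P)\le d(N)+d(P/N)$ is elementary: lift generators of the quotient and adjoin generators of $N$. The rank statement for subgroups of $(\mu_{p^\infty})^d$ is also standard. The step where the coprimality hypothesis of Burnside's theorem genuinely enters is Step 1, which is why the cyclic tame quotient must act with order prime to $p$.
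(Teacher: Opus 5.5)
Your proof is correct and is essentially the paper's argument: Burnside's theorem makes $T/T_0$ act faithfully on the Frattini quotient, the monomial count gives $d_P\le d+v_p(d!)$, and an order bound in $\GL_{d_P}(\mathbb{F}_p)$ finishes, the only difference being that the paper uses Cayley--Hamilton (the powers of $A$ are distinct nonzero elements of $\mathbb{F}_p[A]$, which has at most $p^{d_P}$ elements) where you decompose the semisimple element, so coprimality is not needed at that step. One small slip: your intermediate bound $[T:T_0]\le p^{d_P}-1$ fails when $P$ is trivial (your empty-product inequality reads $1\le 0$), a case the paper treats separately by noting that then $T=T_0$ and $1\le p^{d+v_p(d!)}-1$ because $d\ge1$.
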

\begin{proof}
Let $\operatorname{Frat}(P)$ be the Frattini subgroup. The kernel of
$\Aut(P)\to\Aut(P/\operatorname{Frat}(P))$ is a $p$-group by Burnside's theorem \cite[Theorem~5.1.4]{Gor68}. Since $T/T_0$ has order prime to $p$, its faithful action on $P$ remains faithful on the Frattini quotient.

The faithful representation $\theta|_P$ is a direct sum of monomial irreducibles, so $P$ is a group of monomial $d\times d$ matrices. Let $D$ be its diagonal subgroup. The finite abelian $p$-group $D$ has at most $d$ generators, while $P/D$ embeds in the symmetric group $S_d$ and has order at most $p^{v_p(d!)}$. Lifting generators from $P/D$ gives
\[
 d_P:=\dim_{\mathbb{F}_p}P/\operatorname{Frat}(P)\le d+v_p(d!)\le2d.
\]
Thus $T/T_0$ embeds in $\GL_{d_P}(\mathbb{F}_p)$. If $d_P=0$, this quotient is trivial and the claimed bound holds. Otherwise, let $A$ be the image of a generator. By Cayley--Hamilton, the algebra $\mathbb{F}_p[A]$ has dimension at most $d_P$. The distinct powers of $A$ are nonzero elements of that algebra, so
\[
 [T:T_0]=\operatorname{ord}(A)\le p^{d_P}-1
       \le p^{d+v_p(d!)}-1\le p^{2d}-1.
\]
\end{proof}

\begin{remark}[Image orders for representations of wild inertia]\label{rem:earliertameindex}
The proof of \cref{lem:tameindex} controls the tame action through the Frattini quotient, without requiring a bound on the order of the wild inertia image. Such an order bound cannot in general be obtained from the dimension and highest break alone.

The order estimate in \cite[Proposition~2.3.9]{Ked05}, as stated in the cited arXiv version, does not hold for arbitrary finite-image representations of wild inertia equipped with the upper numbering inherited from the base field. A rank-one family already has bounded highest break and unbounded image order.

Let $k$ be algebraically closed of characteristic $p$, put $F=k((t))$, and, for each $a\ge1$, take the tame extension
\[
 F_a=k((u)),\qquad t=u^{e_a},\qquad e_a=p^a-1.
\]
The absolute wild inertia groups of $F$ and $F_a$ identify; denote their common group by $P_F$. The length-$a$ Artin--Schreier--Witt vector $(u^{-1},0,\ldots,0)$ defines a character $\chi_a:G_{F_a}\to\mu_{p^a}\subset\overline{\Q}_p^{\times}$ of order $p^a$. Indeed, its reduction to length one is the nonzero Artin--Schreier class of $u^{-1}$. The reduced-Witt conductor formula gives highest break $p^{a-1}$ in the $u$-normalization \cite[Theorem~4.9 and Section~4.2.3]{CP09}. Its restriction to $P_F$ still has image of order $p^a$, since the tame quotient has no nontrivial finite $p$-group quotient. Tame base change multiplies positive upper breaks by $e_a$ \cite[Chapter~IV, Section~3]{SerreLF}, so
\[
 b_F(\chi_a|_{P_F})=\frac{p^{a-1}}{p^a-1}\le1,
 \qquad |\chi_a(P_F)|=p^a.
\]
An abelian quotient of $P_F$ need not arise from a finite abelian extension of $F$, so Hasse--Arf does not give integral $F$-breaks for these arbitrary wild-inertia characters. They need not extend to rank-one representations of the full inertia group of $F$. The example therefore concerns the general wild-inertia order estimate. It does not contradict \cref{lem:tameindex}, nor the conclusion of \cite[Remark~6.1.6]{Ked05}, which invokes that estimate as one alternative but also gives a separate argument under additional hypotheses.
\end{remark}

\begin{proposition}\label{prop:induced}
Over a splitting coefficient field, an absolutely irreducible quasi-constant module $M$ of rank $d$ and highest break at most $b$ is a differential direct summand of $(h_2\circ h_1\circ g)_*\LL$, with $\LL$ rank one. The maps $h_1,h_2$ are tame, and their degrees and the degree of the wild map $g$ satisfy
\[
 \deg g=p^{v_p(d)},\qquad \deg h_1=[T:T_0]\le p^{2d}-1,
 \qquad \deg h_2=d/p^{v_p(d)}.
\]
In particular,
\[
 \deg(h_2\circ h_1\circ g)=d[T:T_0]\le d(p^{2d}-1),
 \qquad b(\LL)\le d(p^{2d}-1)b.
\]
The wild map factors into exactly $\nu=v_p(d)$ degree-$p$ steps.
\end{proposition}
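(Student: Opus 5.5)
We translate the group-theoretic construction preceding \cref{lem:degrees} back into differential modules. First, by \cref{rem:finiterealization}, after an unramified coefficient extension we may assume the local monodromy of $M$ is a faithful linear representation $\theta:I\to\GL_d$ of a finite totally ramified inertia group $I=\operatorname{Gal}(E/F)$, with $b(I)=b(M)\le b$. Since $M$ is absolutely irreducible, $\theta$ is irreducible, possibly after a further finite splitting coefficient extension, which does not change $M$ being absolutely irreducible. We then run the construction: $\theta=\Ind_H^I\psi$ with $P\le H$, $H=P\rtimes T$, $H_0=P\times T_0$, and $\psi|_{H_0}=\Ind_{Q\times T_0}^{H_0}(\lambda\boxtimes\lambda_0)$ by \eqref{eq:restriction}. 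Frobenius reciprocity then shows that $\theta$ is a direct summand of $\Ind_{Q\times T_0}^I(\lambda\boxtimes\lambda_0)$. We enlarge the coefficient field so that $\lambda\boxtimes\lambda_0$, the intertwiners and the idempotent are defined.

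Next we use the monodromy correspondence of \cref{subsec:monodromy}. Induction corresponds to pushforward and representation-theoretic summands correspond to differential direct summands. Hence $M$ is a differential direct summand of $f_*\LL$, where $f$ is the cover associated with $Q\times T_0\le I$ and $\LL$ is the rank-one module attached to $\lambda\boxtimes\lambda_0$. Factoring $f$ along the chain $Q\times T_0\subseteq H_0\subseteq H\subseteq I$ gives $f=h_2\circ h_1\circ g$. By \cref{lem:degrees} the degrees are $p^{v_p(d)}$, $[T:T_0]$ and $d'$, and \cref{lem:tameindex} bounds $[T:T_0]\le p^{2d}-1$. The maps $h_1$ and $h_2$ are tame because their indices $[H_0:Q\times T_0]$ is a power of $p$ but $[H:H_0]=[T:T_0]$ and $[I:H]$ are prime to $p$; hence the corresponding residue extensions have degree prime to $p$. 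The degree of $g$ is $[H_0:Q\times T_0]=[P:Q]=p^{\nu}$. Since $P$ is a $p$-group, there is a chain $Q=Q_\nu<\dots<Q_0=P$ with each step of index $p$. Taking products with $T_0$ factors $g$ into exactly $\nu$ degree-$p$ steps.

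For the break bound, we apply \cref{lem:subgroupbreak} to $H'=Q\times T_0\le I$, whose index is $n=d[T:T_0]$. This gives $b(H')\le n\,b(I)\le d(p^{2d}-1)b$, and $\LL$ corresponds to a representation of $H'$, so $b(\LL)$ satisfies the same bound. One point needs care: the break of $\LL$ must be measured in the source coordinate of $f$. That source is the cover corresponding to the subextension $E^{H'}$, whose inertia group is $H'$ with the induced ramification filtration, so \cref{lem:subgroupbreak} applies directly. The product $\deg f=d[T:T_0]\le d(p^{2d}-1)$ follows.

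The main obstacle is the dictionary step. The correspondence of \cref{subsec:monodromy} is stated for semilinear $G$-representations of a Galois cover with possibly larger constants, while the group theory concerns only inertia. We have to check that, after the unramified coefficient extension, the intermediate covers are formally étale covers of $\mathcal{R}_{K',t}$ with constant field $K'$. We also have to check that pushforward along them matches $\Ind$ on inertia. I would handle this by working entirely over $\ell((t))$ with the totally ramified $E$, so that $G=I$ acts $K'$-linearly and the correspondence is an honest equivalence with linear representations, compatible with restriction and induction.
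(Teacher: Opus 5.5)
Your proposal is correct and follows essentially the same route as the paper. Both arguments use the construction behind \eqref{eq:restriction} with Frobenius reciprocity, the monodromy dictionary, \cref{lem:degrees,lem:tameindex} for the degrees, and \cref{lem:subgroupbreak} applied to $Q\times T_0\le I$ of index $d[T:T_0]$ for the break. Your extra care with the dictionary (working over $\ell((t))$ so that $G=I$ acts linearly) makes explicit what the paper takes from \cref{rem:finiterealization}. Your index-$p$ chain in $P$ automatically has normal steps, since maximal subgroups of $p$-groups are normal, which matches the paper's subnormal chain.
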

\begin{proof}
Apply \eqref{eq:restriction}, \cref{lem:degrees,lem:tameindex}, and the correspondence between quasi-constant differential modules and finite local monodromy representations. The break estimate is \cref{lem:subgroupbreak} applied to $Q\times T_0\le I$, whose index is $d[T:T_0]$. Every subgroup of a finite $p$-group is subnormal, so choose a chain from $Q$ to $P$ with each inclusion normal of index $p$. Adjoining the central factor $T_0$ preserves this property. The associated wild map therefore factors into exactly $v_p([P:Q])=v_p(d)$ cyclic degree-$p$ steps.
\end{proof}

For later use, put
\[
 B_*(d,b,p)=d(p^{2d}-1)b.
\]

\begin{proof}[Proof of \cref{thm:C}]
Use \cref{lem:absolutesplitting,rem:finiterealization} to split $M$, after a finite coefficient extension, as a sum of absolutely irreducible quasi-constant modules $M_j$ of ranks $d_j$, counted with multiplicity. Then $\sum_j d_j\le r$ and $b(M_j)\le b$. Apply \cref{prop:induced} to each $M_j$ and take a common finite coefficient extension for the resulting data. The direct-summand assertions combine under direct sums, and
\[
 \sum_j\deg f_j\le\sum_j d_j(p^{2d_j}-1)
       \le r(p^{2r}-1),\qquad
 b(\LL_j)\le d_j(p^{2d_j}-1)b\le r(p^{2r}-1)b.\qedhere
\]
\end{proof}

\subsection{The tame twist and differential summands}\label{subsec:tametwist}
For the analytic construction, we now separate the scalar tame factor from the inducing module in \cref{prop:induced}, so that the wild tower starts from a purely wild rank-one module.
\begin{lemma}\label{lem:twist}
There is a tame character $\chi$ of $H$ such that
\[
 \Ind_{Q\times T_0}^H(\lambda\boxtimes\lambda_0)\otimes\chi^{-1}
 \simeq\Ind_{Q\times T_0}^H(\lambda\boxtimes1).
\]
\end{lemma}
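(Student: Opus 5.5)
The plan is to build $\chi$ as a character of $H$ that is trivial on $P$ and restricts to $\lambda_0$ on $T_0$, and then move the twist inside the induction using the projection formula.

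\emph{Step 1: extend $\lambda_0$ to $T$.} The complement $T$ maps isomorphically onto $H/P$. Since $P\le H\le I$ and $P$ is normal in $I$, this quotient is a subgroup of the cyclic group $I/P$, so $T$ is cyclic. For a finite cyclic group, restriction of linear characters to any subgroup is surjective: the dual of $T$ is cyclic of order $|T|$, and restriction onto the dual of $T_0$ has kernel of order $[T:T_0]$. Hence there is a linear character $\chi_T$ of $T$ with $\chi_T|_{T_0}=\lambda_0$. This step is elementary, but it is the only place where the structure of $I/P$ is used, so I would state it carefully.

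\emph{Step 2: define $\chi$ and check tameness.} Set $\chi=\chi_T\circ\pi$, where $\pi:H\to H/P\cong T$ is the projection. The map $\pi$ is a homomorphism because $P\trianglelefteq H$, and it restricts to the identity on $T$. Thus $\chi$ is a linear character of $H$ with $P\subseteq\ker\chi$, so $\chi$ is tame. Its restriction to $Q\times T_0$ is $1\boxtimes\lambda_0$: it is trivial on $Q\le P$, and it equals $\lambda_0$ on $T_0\le T$. Here $Q\times T_0$ is a genuine direct product, because $T_0$ acts trivially on $P$ by the definition of $T_0=\ker(T\to\Aut P)$.

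\emph{Step 3: projection formula.} For any subgroup $J\le H$, any representation $\alpha$ of $J$, and any character $\chi$ of $H$, one has
\[
 \Ind_J^H(\alpha)\otimes\chi^{-1}\simeq\Ind_J^H\bigl(\alpha\otimes\chi^{-1}|_J\bigr).
\]
This follows from Frobenius reciprocity, or from the explicit isomorphism $f\mapsto(h\mapsto f(h)\chi^{-1}(h))$ on induced-function models. Take $J=Q\times T_0$ and $\alpha=\lambda\boxtimes\lambda_0$. Then
\[
 \alpha\otimes\chi^{-1}|_J=(\lambda\boxtimes\lambda_0)\otimes(1\boxtimes\lambda_0^{-1})=\lambda\boxtimes1,
\]
which gives the claimed isomorphism. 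All characters involved take values in roots of unity of bounded order, so the isomorphism is realized over the same finite splitting coefficient extension used earlier.
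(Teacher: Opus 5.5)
Your proposal is correct and follows the paper's proof: the paper also extends $\lambda_0$ from $T_0$ to the cyclic group $T$, inflates the extension along $H\to H/P\cong T$, and applies the projection formula. The only difference is that the paper constructs the extension explicitly, sending a generator $\gamma$ to a $\zeta$ with $\zeta^{[T:T_0]}=\lambda_0(\gamma^{[T:T_0]})$ and $\zeta^{|T|}=1$, where you instead invoke surjectivity of restriction of characters (your counting argument for that needs only that $T$ is abelian).
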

\begin{proof}
Write $T=\langle\gamma\rangle$ and put $n_1=[T:T_0]$, so $T_0=\langle\gamma^{n_1}\rangle$. Choose $\zeta$ with $\zeta^{n_1}=\lambda_0(\gamma^{n_1})$ and $\zeta^{|T|}=1$. Sending $\gamma$ to $\zeta$ extends $\lambda_0$ to $T$. Inflate this character along $H\to T$, and use the projection formula for induction.
\end{proof}

The character $\lambda\boxtimes1$ is purely wild. Let $M_\psi$ be the differential module corresponding to $\psi$, let $\mathcal T_1$ correspond to the tame character $\chi$ of \cref{lem:twist}, and let $\widetilde\LL$ correspond to $\lambda\boxtimes1$. The inducing module $\LL$ in \cref{prop:induced} is related to it by
\[
 \LL\simeq\widetilde\LL\otimes(h_1\circ g)^*\mathcal T_1.
\]
The original constituent is $h_{2*}M_\psi$, and
\[
 h_1^*(M_\psi\otimes\mathcal T_1^\vee)\simeq g_*\widetilde\LL.
\]
Consequently the differential ambient module is
\begin{equation}\label{eq:ambient}
 V=h_{1*}h_1^*(M_\psi\otimes\mathcal T_1^\vee)
       \simeq h_{1*}g_*\widetilde\LL.
\end{equation}
The direct-summand assertion already proved by Frobenius reciprocity shows that the original constituent is a differential direct summand of $h_{2*}(V\otimes\mathcal T_1)$. The tame twist has exponent in $\mathbb{Z}_{(p)}$ and connection norm at most one.

\section{Frobenius conventions and \'etale base change}\label{sec:frob}
We fix matrix conventions and record the elementary Frobenius structures and finite \'etale base-change statements needed for the explicit pushforwards.
\subsection{Matrix conventions}
For $m\ge1$, put $q=p^{m}$, $\phi=\sigma^{m}$, and
\[
 \Phi^{(m)}=\Phi\circ\sigma^*\Phi\circ\cdots\circ(\sigma^{m-1})^*\Phi.
\]
We write a basis as a row vector $\ee$. If $\nabla_{\del_t}(\ee)=\ee A$ and $\Phi^{(m)}(\phi^*\ee)=\ee \FM $, then for a column vector $x$,
\[
 \nabla_{\del_t}(\ee x)=\ee(Ax+\del_t x),\qquad
 \Phi^{(m)}(\ee x)=\ee \FM \phi(x).
\]
For the standard lift $\phi(t)=t^q$, horizontality is
\begin{equation}\label{eq:horizontal}
 A\FM +\del_t(\FM )=q\FM \phi(A).
\end{equation}
Under $\ee'=\ee U$, the matrices become
\[
 A'=U^{-1}AU+U^{-1}\del_t(U),\qquad
 \FM '=U^{-1}\FM \phi(U).
\]
For a general lift $\tau$, replace $q$ in \eqref{eq:horizontal} by $\del_t(\tau(t))/\tau(t)$.

The connection and Frobenius live on different annuli. At a radius $\rho\ge s^{1/q}$, the source basis for Frobenius is the pullback of the basis at $\rho^q$.

\subsection{Elementary Frobenius structures}
\begin{remark}\label{rem:elementaryfrob}
Let $q=p^m$ and $\phi=\sigma^m$. A tame rank-one model $\nabla_{\del_t}e=a e$, with $a\in\mathbb{Z}_{(p)}$, admits a level-$m$ Frobenius structure exactly when $(q-1)a\in\mathbb{Z}$. In that case
\[
 \Phi(\phi^*e)=t^{(q-1)a}e
\]
defines one, with analytic inverse on every relevant annulus. Every such exponent admits some level because its denominator is prime to $p$.

A unipotent Jordan model $\nabla_{\del_t}e_j=e_{j-1}$, with $\nabla_{\del_t}e_0=0$, carries the structure
\[
 \Phi(\phi^*e_j)=q^j e_j.
\]
The displayed maps are horizontal by \eqref{eq:horizontal}. Taking a common multiple of levels supplies auxiliary structures for direct sums and tensor products of these models without changing their model annulus.
\end{remark}

\subsection{A compatible base-change statement}
\begin{lemma}\label{lem:frobpush}
Let $h:X\to Y$ be a finite \'etale map of annuli. Choose outer subannuli $Y'\subseteq Y$ and $X'=h^{-1}(Y')\subseteq X$, and Frobenius maps fitting into a cartesian square
\begin{equation}\label{eq:frobenius-square}
\begin{tikzcd}[column sep=large,row sep=large]
 X' \arrow[r,"\phi_X"] \arrow[d,"h'"'] & X \arrow[d,"h"] \\
 Y' \arrow[r,"\phi_Y"'] & Y .
\end{tikzcd}
\end{equation}
Here $h'=h|_{X'}$. If $\mathcal{E}$ has a horizontal Frobenius isomorphism $\phi_X^*\mathcal{E}\xrightarrow{\sim}\mathcal{E}|_{X'}$, then $h_*\mathcal{E}$ has the induced horizontal Frobenius isomorphism on $Y'$. It models the corresponding pushforward over the Robba ring.
\end{lemma}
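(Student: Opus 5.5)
The plan is to define the Frobenius on $h_*\mathcal{E}$ by base change along the cartesian square \eqref{eq:frobenius-square}, and then check horizontality and invertibility.

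\textbf{Step 1: base-change isomorphism.} Since $h$ is finite (hence affine), flat base change along $\phi_Y$ gives a canonical isomorphism
\[
 \phi_Y^*h_*\mathcal{E}\xrightarrow{\sim}h'_*\phi_X^*\mathcal{E}.
\]
Concretely, in terms of rings of global sections $\mathscr A_X,\mathscr A_Y$ and $\mathscr A_{X'},\mathscr A_{Y'}$, cartesianness says $\mathscr A_{X'}\cong\mathscr A_X\otimes_{\mathscr A_Y,\phi_Y}\mathscr A_{Y'}$. Then
\[
 \mathscr A_{Y'}\otimes_{\phi_Y,\mathscr A_Y}\Gamma(\mathcal{E})\cong\mathscr A_{X'}\otimes_{\phi_X,\mathscr A_X}\Gamma(\mathcal{E}),
\]
viewed as an $\mathscr A_{Y'}$-module. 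I would check the identification on the affinoid/Laurent-series level, using the generalized affinoid convention of \cite[Remark~3.2.4]{Ked05}. Finiteness of $h$ ensures the tensor products need no completion.

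\textbf{Step 2: the Frobenius map.} Compose the base-change map of Step 1 with $h'_*$ of the given isomorphism $\phi_X^*\mathcal{E}\to\mathcal{E}|_{X'}$. Also use $h'_*(\mathcal{E}|_{X'})=(h_*\mathcal{E})|_{Y'}$, which holds because $X'=h^{-1}(Y')$. Each factor is an isomorphism, so the result is an isomorphism $\phi_Y^*h_*\mathcal{E}\xrightarrow{\sim}(h_*\mathcal{E})|_{Y'}$.

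\textbf{Step 3: horizontality.} This is the only real check, and I expect it to be the main obstacle. The connection on $h_*\mathcal{E}$ is the restriction-of-scalars connection: a derivation $\partial$ on $Y$ lifts uniquely to $X$ because $h$ is étale, and $h_*\nabla_\partial=\nabla_{\tilde\partial}$. The pullback connections along $\phi_X$ and $\phi_Y$ are defined through the chain rule. So I must show that the base-change isomorphism of Step 1 intertwines these connections. This reduces to two facts. First, the unique lift of $\phi_Y^*\partial$ to $X'$ is compatible with $\phi_X^*$ of the lift of $\partial$; this holds because $h'$ is also étale, derivations lift uniquely, and the square commutes. Second, the pulled-back derivations agree on $\mathscr A_{X'}=\mathscr A_X\otimes\mathscr A_{Y'}$, which can be checked on pure tensors by the Leibniz rule. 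Horizontality of the given Frobenius is preserved by $h'_*$, since pushforward is functorial for horizontal maps.

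\textbf{Step 4: Robba compatibility.} Tensoring with the Robba ring of the target commutes with $h_*$, because $\mathcal{R}_X=\mathscr A_X\otimes_{\mathscr A_Y}\mathcal{R}_Y$ for the finite étale integral lift. Tensoring also commutes with the Frobenius constructions. Hence the model recovers the pushforward Frobenius module over the Robba ring, which is the final assertion.
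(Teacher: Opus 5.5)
Your proposal is correct and follows the paper's proof. Both use the finite \'etale base-change isomorphism $\phi_Y^*h_*\mathcal{E}\simeq h'_*\phi_X^*\mathcal{E}$ composed with $h'_*$ of the given Frobenius, take horizontality from the unique extension of the base derivation to the \'etale algebra, and note that restriction of scalars commutes with shrinking the annulus and with passage to the Robba ring. You just spell out the horizontality check in more detail than the paper does, and flatness is not actually needed in Step 1: for the finite map $h$, base change along the cartesian square is associativity of tensor products.
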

\begin{proof}
Finite \'etale base change gives
\[
 \phi_Y^*h_*\mathcal{E}\simeq h'_*\phi_X^*\mathcal{E}
 \xrightarrow{\ h'_*\Phi_{\mathcal{E}}\ }h'_*(\mathcal{E}|_{X'})=(h_*\mathcal{E})|_{Y'}.
\]
The connection on $h_*\mathcal{E}$ uses the unique extension of the base derivation to the finite \'etale algebra; it makes this isomorphism horizontal. Restriction of scalars commutes with restriction to smaller annuli and with passage to the Robba ring.
\end{proof}

\begin{lemma}[Finite \'etale comparison]\label{lem:etalecomparison}
Let $A\hookrightarrow R$ be an injective homomorphism of domains, and let $B,C$ be finite free \'etale $A$-algebras of the same rank. If an $A$-algebra homomorphism $\theta:B\to C$ becomes an isomorphism after extension to $R$, then $\theta$ is an isomorphism.
\end{lemma}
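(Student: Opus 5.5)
The plan is to reduce the statement to a problem over the fraction field and then use the fact that a homomorphism between finite étale algebras of equal rank is controlled by its discriminant.

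\emph{Step 1: injectivity.} Since $B$ and $C$ are finite free over $A$, we have $B\otimes_AR$ free over $R$, and $A\hookrightarrow R$ injective makes $B\hookrightarrow B\otimes_AR$ injective; this uses only freeness. The same holds for $C$. Now $\theta\otimes R$ is an isomorphism, so $\theta$ is injective: if $\theta(x)=0$, then $(\theta\otimes R)(x\otimes1)=0$, hence $x\otimes1=0$, hence $x=0$.

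\emph{Step 2: determinant.} Choose $A$-bases of $B$ and $C$; both have the same rank $n$. Write $\theta$ as an $n\times n$ matrix $T$ over $A$. Then $\theta\otimes R$ is represented by the same matrix, so $\det T$ is a unit in $R$. It remains to show that $\det T$ is a unit in $A$. I would not argue directly from $R$, since $A^\times$ need not equal $A\cap R^\times$. Instead I would use étaleness.

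\emph{Step 3: discriminant comparison.} Let $\operatorname{disc}(B)$ and $\operatorname{disc}(C)$ be the discriminants of the trace forms with respect to the chosen bases. Because $B$ and $C$ are finite free étale, both trace forms are perfect, so both discriminants are units of $A$. The key identity is $\operatorname{Tr}_{C/A}(\theta(x)\theta(y))=\operatorname{Tr}_{B/A}(xy)$. This is where $\theta$ being an algebra map matters. The identity holds because $\theta\otimes R$ is an $R$-algebra isomorphism, so it preserves traces after base change to $R$. Traces commute with base change for free modules, and $A\hookrightarrow R$ is injective, so the identity descends to $A$. Gram-matrix functoriality then gives
\[
 \operatorname{disc}(B)=(\det T)^2\operatorname{disc}(C).
\]
Hence $(\det T)^2$ is a unit of $A$, so $\det T$ is a unit, and $\theta$ is an isomorphism of $A$-modules, hence of $A$-algebras.

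The main obstacle is Step 3. One must make sure the trace identity is genuinely an equality in $A$, not just in $R$. One must also use the right notion of étale: finite free étale implies that the trace pairing is nondegenerate over $A$, that is, it has unit discriminant. Both points are standard, and injectivity of $A\to R$ together with freeness handles the first. Note that the domain hypothesis is then used only mildly, through the injectivity in Step 1.
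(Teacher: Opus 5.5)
Your proof is correct and is the same as the paper's: pull the trace pairings back along $\theta$ using the isomorphism over $R$, descend that identity to $A$ by injectivity of $A\to R$, and conclude from $\operatorname{disc}(B)=(\det T)^2\operatorname{disc}(C)$, where both discriminants are units by étaleness. Your Step~1 is not needed, since invertibility of $\det T$ over $A$ already gives the conclusion; likewise, the argument uses only the injectivity of $A\to R$ and never the hypothesis that $A$ and $R$ are domains.
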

\begin{proof}
Choose bases of $B,C$, and let $T$ be the matrix of $\theta$. The trace pairings agree under $\theta$ after extension to $R$, hence over $A$ by injectivity. Their matrices therefore satisfy $G_B=T^{\mathsf t}G_CT$, so
\[
 \operatorname{disc}(B)=(\det T)^2\operatorname{disc}(C).
\]
Both discriminants are units because the algebras are \'etale. Thus $\det T$ is a unit, and $\theta$ is an isomorphism.
\end{proof}

\begin{corollary}[Cartesianness on a collar]\label{cor:cartesiancollar}
Consider a commuting square as in \eqref{eq:frobenius-square}, with finite \'etale vertical maps of the same degree. Suppose the section algebra of $X'$ and the pullback algebra are finite free over $\Gamma(Y',\OO)$. If the comparison map is analytic on the whole collar and the square becomes cartesian over the Robba ring, then it is cartesian on the whole collar.
\end{corollary}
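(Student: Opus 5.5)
The plan is to reduce the corollary to \cref{lem:etalecomparison}, taking $A=\Gamma(Y',\OO)$ and $R=\mathcal{R}_{K,t}$ (or the relevant Robba ring in the coordinate of $Y$). Write $B=\Gamma(Y',\OO)\otimes_{\Gamma(Y,\OO)}\Gamma(X,\OO)$ for the pullback algebra, that is, the section algebra of the fibre product $Y'\times_Y X$. Write $C=\Gamma(X',\OO)$. The commuting square determines a comparison homomorphism $\theta:B\to C$ of $A$-algebras. It sends $a\otimes x$ to $h'^*(a)\,\phi_X^*(x)$, and by hypothesis it is analytic on the whole collar. Cartesianness on the collar means exactly that $\theta$ is an isomorphism.

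First I will check the hypotheses of \cref{lem:etalecomparison}.

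\begin{itemize}
\item \emph{Injectivity.} $A$ is the ring of analytic functions on an annulus, hence a domain. The map $A\hookrightarrow R$ is injective because a function vanishing near the outer boundary vanishes identically, by the identity principle for Laurent series.
\item \emph{\'Etaleness.} $B$ is finite \'etale over $A$ as a base change of the finite \'etale map $h$. $C$ is finite \'etale over $A$ because $h'=h|_{X'}$ is.
\item \emph{Freeness and rank.} Both algebras are finite free by assumption. Their ranks agree, since both equal the common degree of the vertical maps: $B$ has rank $\deg h$ by base change, and $C$ has rank $\deg h'$, which is the same number.
\end{itemize}

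Second, I will verify that $\theta\otimes_A R$ is an isomorphism. Base change to $R$ commutes with forming the tensor-product algebra, so $B\otimes_A R$ is the pullback algebra over the Robba ring. Likewise $C\otimes_A R$ is the Robba-ring section algebra of $X'$, by the description of covers as finite \'etale extensions of $\mathcal{R}$ in \cref{subsec:setup}. The map $\theta\otimes R$ is then the comparison map for the square over the Robba ring, which is an isomorphism by hypothesis. \Cref{lem:etalecomparison} now gives that $\theta$ is an isomorphism over $A$, which proves the claim.

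The main point needing care is the identification $C\otimes_A R\cong$ the Robba-level algebra of the source, so that the hypothesis \emph{cartesian over the Robba ring} really is the statement that $\theta\otimes_A R$ is bijective. If $\Gamma(X',\OO)$ is only an algebra over a smaller annulus ring, I will first restrict to a common outer collar. I will then use that finite freeness makes tensoring with $R$ compatible with taking sections, so that this step is just the flatness of $A\to R$ on free modules.
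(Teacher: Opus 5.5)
Your proposal is correct and follows the paper's own proof. The paper applies \cref{lem:etalecomparison} to the comparison map from the pullback algebra to $\Gamma(X',\OO)$, with $A=\Gamma(Y',\OO)$ and the injection $A\hookrightarrow\mathcal{R}$. Your checks of injectivity, \'etaleness, equal rank and the base-change identification spell out the details the paper leaves implicit.
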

\begin{proof}
Apply \cref{lem:etalecomparison} to the comparison between the pullback algebra and the algebra of $X'$, using the injection $\Gamma(Y',\OO)\hookrightarrow\mathcal{R}$. This gives the analytic inverse on the stated collar.
\end{proof}

\section{Rank-one models}\label{sec:rankone}
We construct rank-one normal-form models with uniform connection bounds and an auxiliary Frobenius collar. These models supply the initial modules for the pushforwards in \cref{sec:push}.
\subsection{Coefficient fields and Lubin--Tate parameters}\label{subsec:coefficients}
The rank-one models and the lifted covering equations use a common Witt coefficient field with finitely many Lubin--Tate parameters.
Fix once and for all a Lubin--Tate tower for a series over $\mathbb{Z}_p$ with uniformizer $p$, with first parameter $\pi_0^{p-1}=-p$. For $j\ge0$, put $F_j=\Q_p(\pi_j)$; this field contains $\pi_0,\ldots,\pi_j$ and is totally ramified over $\Q_p$ \cite[Section~1.5]{Pul07}. For later use, write
\[
 j_*(B)=\left\lfloor\log_p\max\{1,B\}\right\rfloor
\]
for the Lubin--Tate cutoff associated with a break bound $B$.

\begin{remark}[Witt coefficient fields and Frobenius]\label{rem:coefficientlift}
For a finite residue extension $\ell/k$, put $C_\ell=\operatorname{Frac}W(\ell)$. The totally ramified field $F_j$ is linearly disjoint from $C_\ell$ over $\Q_p$. Witt Frobenius therefore extends to an isometric automorphism of $C_\ell F_j$ fixing $F_j$ pointwise. This is the coefficient lift used for the explicit models.

Any isometric absolute Frobenius lift on a complete coefficient field with perfect residue field $\ell$ preserves $C_\ell$ and restricts to Witt Frobenius there. Indeed, it takes each Teichm\"uller representative $[x]$ to $[x^p]$, fixes $p$, and is continuous; apply this to the expansion $\sum_{j\ge0}p^j[x_j]$. In particular $\sigma_K|_{C_k}=\sigma_{C_k}$ for a prescribed lift on $K$. The final descent restricts scalars to $C_k$, so no extension of $\sigma_K$ to a splitting field is needed.
\end{remark}

\subsection{Normal form and the connection estimate}\label{subsec:rankonenormal}
We use the rank-one classification of Pulita \cite[Theorem~3.1]{Pul07}. For earlier work on the differential equations associated with Artin--Schreier--Witt coverings and their local indices, see Matsuda \cite{Mat95}. For the compatible Lubin--Tate parameters fixed in \cref{subsec:coefficients}, one has
\[
 |\pi_j|=p^{-1/(p^j(p-1))},
\]
and we work over a coefficient field containing the finitely many parameters required. We write $w$ for the coordinate; in \cref{sec:push} it is $w_0$. After a finite residue extension removing the constant Witt part, a rank-one quasi-constant module is described by a tame exponent $a_0\in\mathbb{Z}_{(p)}$ and a reduced residue Witt vector of length $n+1$
\[
 \bar{\ff}^-=(\bar f_0,\ldots,\bar f_n),\qquad
 \bar f_i\in w^{-1}k[w^{-1}].
\]
These polynomials describe the wild part. Put $\bar d_i=\deg_{w^{-1}}\bar f_i$, omitting zero components. The break of the reduced character is
\begin{equation}\label{eq:rankonebreak}
 B=\max_i p^{n-i}\bar d_i.
\end{equation}
Here $B$ is the Swan conductor, given by the reduced-Witt formula of \cite[Theorem~4.9 and Section~4.2.3, equation~(4.2.22)]{CP09}. Over a perfect residue field this agrees with the classical ramification-group Swan conductor \cite[Proposition~7.1 and its proof]{Bor02}, and with the differential Swan conductor \cite[Theorem~7.1]{CP09}, which in rank one is the single differential break.\footnote{For the earlier result over a finite residue field, see \cite[Section~1, pp.~26--27]{Bry83}. The conductor there is the Artin conductor, equal to $B+1$ for a nontrivial wild rank-one character.} For a nontrivial wild character, choose the length so that $n+1\le1+\lfloor\log_p B\rfloor$.

\paragraph{The Witt-monomial lift.}
Give a monomial $w^{-k}$ in component $i$ weighted pole order $kp^{n-i}$. Express $\bar{\ff}^-$ as a finite Witt sum of terms $V^i([\bar a w^{-k}])$, successively matching its components from component zero onward. Witt addition and subtraction preserve the weighted pole bound, since their component polynomials are homogeneous for these weights. At component $i$ there are at most $\lfloor B/p^{n-i}\rfloor$ monomials, so at most
\[
 J(B,p)=\sum_{j=0}^{\lfloor\log_p B\rfloor}\lfloor B/p^j\rfloor
\]
terms are needed, all with $kp^{n-i}\le B$. Lift each $\bar a$ to its Teichm\"uller representative $a$ and take the same Witt sum in characteristic zero. This gives
\[
 \ff^-=(f_0,\ldots,f_n),\qquad f_i\in w^{-1}\OO_K[w^{-1}],
\]
with the same weighted pole bound; its components need not be the componentwise Teichm\"uller lifts. If $d_i=\deg_{w^{-1}}f_i$, then $p^{n-i}d_i\le B$, with equality for at least one component when the wild part is nontrivial. For $B<1$, that part is trivial and the construction is empty.

Pulita's notation $\mathrm e_{p^n}(\ff^-,1)$ denotes the Witt exponential formed from the ghost components and the chosen Lubin--Tate parameters. Set $Y=w^{a_0}\mathrm e_{p^n}(\ff^-,1)$ and use the convention $\omega=\del_w\log Y$. Logarithmic differentiation gives
\begin{equation}\label{eq:rankonecoefficient}
 \omega=a_0+\sum_{j=0}^n\pi_{n-j}
          \sum_{i=0}^{j}f_i^{\,p^{j-i}-1}\del_w(f_i).
\end{equation}
\begin{lemma}\label{lem:rankone}
The reduced normal form of a rank-one quasi-constant module of break $B$ defines a model on every annulus. For every $0<c<1$ it has a global-basis witness for $(c^{1/(B+1)},c^{-1})$-boundedness.
A tame rank-one module is $(s,1)$-bounded with a global-basis witness for every $s\in(0,1)$.
For a purely wild normal form with exponent $a_0=0$ and break at most $B$, the stronger estimate is
\begin{equation}\label{eq:smallrankone}
 \begin{gathered}
 \omega=\sum_{1\le k\le B}c_kw^{-k},\qquad |c_k|\le\kappa,\\
 |\omega|_\rho\le\kappa\rho^{-B},\qquad
 \kappa=|\pi_0|^{1/p^{j_*(B)}}<1.
 \end{gathered}
\end{equation}
\end{lemma}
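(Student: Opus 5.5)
The proof is a direct estimate on the explicit coefficient \eqref{eq:rankonecoefficient}, carried out in the global basis $e$ given by the normal form $Y$. The model is the rank-one free module on any annulus $A[s,1)$ with $\nabla_{\del_w}e=\omega e$. Since $\omega$ is a Laurent polynomial in $w$ (a finite polynomial in $w^{-1}$ plus a constant), it is analytic on every annulus. The quasi-constant module is then the Robba-ring base change of this model, and this identification is exactly Pulita's classification \cite[Theorem~3.1]{Pul07} applied to the reduced normal form. So the only content is bounding $|\omega|_\rho$ in this single basis for every $\rho\in[s,1)$.

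I would handle the tame case first. There $\omega=a_0\in\mathbb{Z}_{(p)}$, so $|\omega|_\rho=|a_0|\le1$ for every $\rho$, which gives $(s,1)$-boundedness for all $s$.

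For the purely wild case I would expand \eqref{eq:rankonecoefficient} term by term, in three steps.
\begin{enumerate}
\item Each $f_i\in w^{-1}\OO_K[w^{-1}]$ has degree $d_i$ with $p^{n-i}d_i\le B$. Since $\del_w$ preserves integrality and degree, the product $f_i^{p^{j-i}-1}\del_w(f_i)$ is a polynomial in $w^{-1}$ with integral coefficients and degree $p^{j-i}d_i\le p^{j-n}B\le B$. Every monomial $w^{-k}$ that appears therefore has $1\le k\le B$.
\item The coefficient of each such monomial is a sum of integral elements times $\pi_{n-j}$ with $0\le j\le n$. Its absolute value is at most $\max_j|\pi_{n-j}|=|\pi_0|^{1/p^{n}}$. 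By the choice of length, $n\le\lfloor\log_pB\rfloor=j_*(B)$, so this maximum is at most $|\pi_0|^{1/p^{j_*(B)}}=\kappa<1$.
\item From steps 1 and 2, $|c_k|\le\kappa$, and $|\omega|_\rho=\max_k|c_k|\rho^{-k}\le\kappa\rho^{-B}$ because $\rho<1$.
\end{enumerate}

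The general bound follows by writing $\omega=a_0+\omega_{\mathrm{wild}}$ and applying the two cases, giving $|\omega|_\rho\le\max\{1,\rho^{-B}\}=\rho^{-B}$. If $\rho\ge c^{1/(B+1)}$, then $\rho^{-B}\le c^{-B/(B+1)}\le c^{-1}$. The exponent $B+1$ is deliberately generous and leaves room, which is convenient later. I would also note that this bound of $c^{-1}$ is at least $1$, so the tame term is absorbed.

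The main obstacle is making sure the two fixed conventions combine correctly. First, the Witt-monomial lift has to keep the weighted pole bound in characteristic zero. The paper asserts this via homogeneity of the Witt polynomials, and I would rely on that assertion. Second, the length $n$ has to satisfy $n\le j_*(B)$ so that only parameters $\pi_0,\dots,\pi_{j_*(B)}$ appear. I would check the boundary case $B<1$ separately: there the wild part is empty and $\omega=a_0$, so the tame estimate applies.
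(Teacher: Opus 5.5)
Your proposal is correct and follows the paper's proof: a term-by-term estimate of \eqref{eq:rankonecoefficient} using integrality, the pole bound $p^{j-i}d_i\le B$, $|a_0|\le1$, and the bound $|\pi_{n-j}|\le\kappa$ on the Lubin--Tate parameters. The only difference is in that last bound: you obtain it from the global length choice $n\le j_*(B)$, whereas the paper argues term by term ($f_i\neq0$ forces $p^{n-i}\le p^{n-i}d_i\le B$, hence $n-j\le n-i\le j_*(B)$), which also covers presentations with initial zero Witt components.
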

\begin{proof}
Each summand in \eqref{eq:rankonecoefficient} is a Laurent polynomial with integral coefficients and pole order at most $p^{j-i}d_i\le B$. Since $|a_0|\le1$ and $|\pi_j|\le1$, the connection coefficient $\omega$ satisfies
\[
 |\omega|_\rho\le\max\{1,\rho^{-B}\}.
\]
For $\rho\ge c^{1/(B+1)}$, this is at most $c^{-B/(B+1)}\le c^{-1}$. A tame module has constant coefficient $a_0\in\mathbb{Z}_{(p)}$, of norm at most one.

If $a_0=0$, every term has strictly negative exponent. A nonzero term indexed by $i\le j$ in \eqref{eq:rankonecoefficient} has $p^{n-i}d_i\le B$, hence $n-j\le j_*(B)$ and $|\pi_{n-j}|\le\kappa$. Its remaining coefficients are integral, so \eqref{eq:smallrankone} follows. This also covers a presentation with initial zero Witt components; when $B<1$, the purely wild part is trivial and $\omega=0$.
\end{proof}

The dual module has coefficient $-\omega$ and satisfies the same estimates.

\Needspace{5\baselineskip}
\subsection{Frobenius for the purely wild factor}
\begin{lemma}[Uniform rank-one Frobenius collar]\label{lem:rankonefrob}
Use the isometric absolute coefficient lift of \cref{rem:coefficientlift}, which acts by Witt Frobenius and fixes the selected Lubin--Tate parameters, and the variable lift $w\mapsto w^p$. Purely wild rank-one modules of break at most $B$ admit normal-form models carrying a level-one Frobenius isomorphism on a collar whose inner radius depends only on $B,p$. The inverse is analytic on the same Frobenius subannulus.
\end{lemma}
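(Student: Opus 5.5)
I would construct the Frobenius directly on the basis vector $e$ of the normal-form model $\nabla_{\del_w}e=\omega e$ from \cref{lem:rankone}, with $\omega=\del_w\log Y$ and $Y=\mathrm e_{p^n}(\ff^-,1)$. A level-one Frobenius is then multiplication by a unit $\FM=u$ with $\Phi(\phi^*e)=ue$. Since the module is rank one, the horizontality condition \eqref{eq:horizontal} with $q=p$ reads
\[
\del_w\log u=p\,\phi(\omega)-\omega .
\]
Formally, the natural candidate is $u=\phi(Y)^{p}/Y$; more precisely, it is the ratio $Y^{\sigma}(w^p)/Y(w)$, corrected by the factor coming from the derivative of $w\mapsto w^p$. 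Because the coefficient lift fixes the Lubin--Tate parameters and acts on the Teichm\"uller coefficients of $\ff^-$ by $a\mapsto a^p$, the lift of $\ff^-$ is the Witt vector $\ff^{-,\sigma}(w^p)$. This vector reduces to $F(\bar\ff^-)$, the Witt Frobenius of $\bar\ff^-$.

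The first step is to show that $F(\bar\ff^-)-\bar\ff^-$ is an Artin--Schreier--Witt coboundary. By definition it is $\wp(\bar\ff^-)$, so it is trivial in residual cohomology. Hence the characters defined by $\phi^*$ of the module and by the module itself coincide, and a horizontal isomorphism exists over the Robba ring. The real content is to make this quantitative, and I would do so via Pulita's theory. The quotient
\[
u=\frac{\mathrm e_{p^n}(\ff^{-,\sigma}(w^p),1)}{\mathrm e_{p^n}(\ff^-,1)}
\]
equals $\mathrm e_{p^n}(\ff^{-,\sigma}(w^p)-\ff^-,1)$ by multiplicativity of Pulita's exponential in the Witt vector argument. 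The argument $\ff^{-,\sigma}(w^p)-\ff^-$ is, modulo $p$, the image under $F-1$ of a Witt vector. I would then use Pulita's result \cite{Pul07} that $\mathrm e_{p^n}(\boldsymbol g,1)$ converges on a disc determined by the $p$-divisibility of the ghost components. Since $(\boldsymbol\phi-1)$ applied to a Witt vector with polynomial entries yields ghost components divisible by $p$, Pulita's exponential of such an argument is overconvergent: it is analytic and invertible on $|w|\ge s_0$ for an explicit $s_0$.

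The second step is the uniformity. The input $\ff^-$ has at most $J(B,p)$ Witt-monomial terms, length $n+1\le 1+\lfloor\log_pB\rfloor$, pole orders at most $B$, and integral coefficients. I would therefore treat each term $V^i([a w^{-k}])$ separately, using multiplicativity again. For a single term, $\mathrm e$ of its Frobenius difference is an explicit Dwork-type exponential in $a w^{-k}$. Its radius of convergence in the variable $w^{-1}$ depends only on $k\le B$, $i\le n$ and $p$, through $|\pi_j|$ and the standard Dwork estimate $|\pi_0|$-exponential convergence on a disc of radius exceeding $1$. Taking the worst case over the finitely many allowed $(i,k)$ gives an inner radius $s_1(B,p)<1$. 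The inverse $u^{-1}$ is the same construction with $-\ff^-$, so it is analytic on the same collar $[s_1^{1/p},1)$. Finally, horizontality holds on the collar because it holds over the Robba ring and both sides are analytic there.

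The main obstacle is the overconvergence estimate for $\mathrm e_{p^n}$ of a Frobenius-difference Witt vector, in particular checking that the mixed Witt addition (the vector is a Witt sum, not componentwise Teichm\"uller) does not spoil the per-term estimate. My first approach would be to work entirely with ghost components, where the sum is additive. For $\boldsymbol g=\boldsymbol\phi(\boldsymbol h)-\boldsymbol h$, the exponent $\sum_j\pi_{n-j}\,\phi_j/p^{j}$ splits as a telescoping Dwork series. I would then bound each piece using $|\pi_{n-j}|$ together with the weighted pole bound $p^{n-i}d_i\le B$.
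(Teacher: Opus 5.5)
Your proposal is correct and is essentially the paper's proof. Additivity of ghost components factors $Y$ exactly into elementary exponentials $E_j(aw^{-k})$ with $kp^j\le B$, so the Frobenius factor $u=Y^{\sigma}(w^p)/Y$ is a product of terms $\Theta_j(aw^{-k})$ with $\Theta_j(X)=E_j(X^p)/E_j(X)$; Pulita's overconvergence of $\Theta_j^{\pm1}$, taken over the finitely many pairs $(j,k)$, then gives a collar depending only on $B,p$.

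Two small corrections. First, $u$ is exactly $Y^{\sigma}(w^p)/Y$, with no extra derivative factor, since the $p$ in $p\,\phi(\omega)$ is produced by the chain rule. Second, overconvergence comes from this Dwork-type telescoping, not from $p$-divisibility of the ghost components of $\phi(\boldsymbol h)-\boldsymbol h$; that divisibility already fails for $\boldsymbol h=[aw^{-k}]$.
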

\begin{proof}
For $B<1$, use the constant model. Otherwise remove any integral exponent and use the Witt-monomial lift constructed in \cref{subsec:rankonenormal}.

The additivity of the Witt exponential gives an exact product of elementary factors
\[
 Y(w)=\prod E_j(a w^{-k}),\qquad
 E_j(X)=\exp\left(\sum_{v=0}^j\frac{\pi_{j-v}}{p^v}X^{p^v}\right),
 \qquad kp^j\le B;
\]
see \cite[Sections~2.2 and~3.2]{Pul07}. Since $\sigma_K(a)=a^p$ and the parameters are fixed, the Frobenius quotient is a product of
\[
 \Theta_j(a w^{-k}),\qquad \Theta_j(X)=E_j(X^p)/E_j(X).
\]
By \cite[Theorem~2.5]{Pul07}, these functions and their inverses are overconvergent. Choose $R_\Theta(B,p)>1$ strictly inside their convergence discs for all $j\le\lfloor\log_p B\rfloor$. Choose $s_*<1$ with $s_*^{-k}\le R_\Theta(B,p)$ for every possible $k$. All the substitutions, their finite product, and its inverse are analytic on $A[s_*,1)$. The finite choices of $j,k$ show that $s_*$ depends only on $B,p$.

Logarithmic differentiation proves horizontality of the quotient. Taking the differential-model radius at least $s_*^p$ places its Frobenius subannulus inside $A[s_*,1)$.
\end{proof}

\section{Effective pushforwards of modules}\label{sec:push}
We estimate pushforwards through a tower of wild degree-$p$ steps followed by two tame power maps, as in \cref{fig:cover-tower}. In the final assembly, \cref{prop:geometric} realizes the inducing cover of \cref{prop:induced} in this form after a finite coefficient extension.

\begin{figure}[!ht]
\centering
\begin{tikzcd}[column sep=large]
 A_{w_0} \arrow[r,"g_0","p"'] & \cdots \arrow[r,"g_{\nu-1}","p"']
 & A_{w_\nu} \arrow[r,"h_1","n_1"'] & A_u \arrow[r,"h_2","d'"'] & A_t .
\end{tikzcd}
\caption{Factorization of the inducing cover.}\label{fig:cover-tower}
\end{figure}

Here $A_x$ denotes a suitable outer annulus in coordinate $x$, and each arrow is labeled by its name and degree. For an absolutely irreducible constituent of rank $d$, the inducing degrees are
\[
 \nu=v_p(d),\qquad n_1=[T:T_0],\qquad d'=d/p^\nu.
\]
The wild map is $g=g_{\nu-1}\circ\cdots\circ g_0$, interpreted as the identity when $\nu=0$. The rank-one module of \cref{sec:rankone} lives on $A_{w_0}$. We use $u=w_\nu^{\,n_1}$ and $t=u^{d'}$ for the tame maps; these coordinate choices are justified in \cref{prop:geometric}. We treat the tame maps first and then the wild steps.

\subsection{Effective tame pushforward}
For a tame power map, the power basis gives both the radius change and the connection bound under pushforward.
Let $(n,p)=1$ and
\[
 f:A_{K,u}[s,1)\longrightarrow A_{K,t}[s^n,1),\qquad t=u^n.
\]
Then $\del_u=n\del_t$ on the base ring, so the pushforward connection is $\nabla_{\del_t}=n^{-1}\nabla_{\del_u}$.

\begin{lemma}\label{lem:tamebasis}
At radii $|u|=\rho$, $|t|=\rho_t=\rho^n$, the elements $1,u,\ldots,u^{n-1}$ form an orthogonal basis of the upper ring over the lower ring. In particular,
\[
 \left|\sum_{j=0}^{n-1}a_j(t)u^j\right|_\rho
 =\max_j |a_j|_{\rho_t}\rho^j.
\]
\end{lemma}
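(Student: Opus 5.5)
The plan is to prove the norm identity directly from Laurent expansions, by showing that the $n$ summands $a_j(t)u^j$ have pairwise disjoint monomial supports in $u$. First I need the decomposition itself. Put $t=u^n$, and write an element $x=\sum_{m\in\mathbb Z}c_mu^m$ of the upper ring $R_\rho$ (the ring in the variable $u$ at radius $\rho$). For each $j\in\{0,\ldots,n-1\}$, collect the exponents $m\equiv j\pmod n$, writing $m=nk+j$. This gives
\[
 x=\sum_{j=0}^{n-1}a_j(t)u^j,\qquad a_j(t)=\sum_{k\in\mathbb Z}c_{nk+j}t^k .
\]
Since $|c_{nk+j}|\rho_t^k=|c_{nk+j}|\rho^{nk+j}\rho^{-j}$ and $|c_m|\rho^m\to0$ as $m\to\pm\infty$, each $a_j$ lies in the lower ring $R_{\rho_t}$. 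This decomposition is unique, because distinct pairs $(j,k)$ give distinct exponents $m=nk+j$. So $1,u,\ldots,u^{n-1}$ form a basis of $R_\rho$ over $R_{\rho_t}$. The same argument, applied on whole annuli $A_{K,u}[s,1)$ over $A_{K,t}[s^n,1)$, shows the basis is global, but only the circle statement is needed here.

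Next I compute the norm. The support of $a_j(t)u^j$ in $u$-exponents lies in the residue class $j\bmod n$, so the supports are disjoint. The Gauss norm $|\cdot|_\rho$ is the maximum over monomials, hence
\[
 |x|_\rho=\max_m|c_m|\rho^m=\max_j\max_k|c_{nk+j}|\rho^{nk+j}.
\]
Each inner maximum equals $\rho^j\max_k|c_{nk+j}|\rho_t^k=|a_j|_{\rho_t}\rho^j$. This is the claimed identity, and it says exactly that the basis is orthogonal with weights $\rho^j$. The identity holds for every real $\rho$, including $\rho\notin|K^\times|^{\Q}$, since only the definition of $R_\rho$ by coefficient decay is used.

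There is no real obstacle. The only point needing care is that the hypothesis $(n,p)=1$ plays no role in this lemma. It is needed later, when the connection matrix $\nabla_{\del_t}(u^j)=n^{-1}ju^j$ is computed and $|n^{-1}|=1$ is used, but not for orthogonality.
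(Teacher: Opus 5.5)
Your proof is correct and follows the paper's argument: group the Laurent expansion in $u$ by exponent modulo $n$, observe that the summands $a_j(t)u^j$ have disjoint supports, and read off the norm identity from the Gauss norm. Your extra checks (that each $a_j$ lies in $R_{\rho_t}$, that the decomposition is unique, and that $(n,p)=1$ is not used here) fill in details the paper leaves implicit.
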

\begin{proof}
Group the Laurent series in $u$ according to its exponent modulo $n$. The summands occupy disjoint sets of Laurent exponents, and the equality follows from the Gauss norm.
\end{proof}

\begin{lemma}[Tame connection bound]\label{lem:tame}
If $M$ is $(s,N)$-bounded, then $f_*M$ is $(s^n,N')$-bounded, where
\[
 N'=\max\{1,s^{1-n}N\}.
\]
Global-basis witnesses give global-basis witnesses for this bound.
\end{lemma}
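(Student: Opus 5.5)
We take the model of $f_*M$ to be the restriction of scalars $f_*\mathcal{E}$ of the given model $\mathcal{E}$ on $A_{K,u}[s,1)$. Since $f$ is finite étale on these annuli and $\Gamma(A_{K,u}[s,1),\OO)$ is free over $\Gamma(A_{K,t}[s^n,1),\OO)$ with basis $1,u,\ldots,u^{n-1}$, the bundle $f_*\mathcal{E}$ models the pushforward over the Robba ring. Its restriction to the circle $|t|=\rho_t$ is $\mathcal{E}_\rho$ viewed as an $R_{\rho_t}$-module, where $\rho=\rho_t^{1/n}\in[s,1)$.

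Fix $\rho$ and the witnessing basis $\ee_\rho=(e_1,\ldots,e_m)$ with $\nabla_{\del_u}\ee_\rho=\ee_\rho A_\rho$ and $|A_\rho|_\rho\le N$. As $R_{\rho_t}$-basis of the pushforward we take the products $u^je_i$ for $0\le j<n$. Using $\nabla_{\del_t}=n^{-1}\nabla_{\del_u}$, we get
\[
 \nabla_{\del_t}(u^je_i)=n^{-1}\Bigl(j\,u^je_i+\sum_k (A_\rho)_{ki}\,u^je_k\Bigr).
\]
Expand each entry $(A_\rho)_{ki}u^j=\sum_l a_l(t)u^l$, where terms with $l+j$ wrapping past $n-1$ absorb a factor $t$. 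By \cref{lem:tamebasis},
\[
 \max_l|a_l|_{\rho_t}\rho^l=|(A_\rho)_{ki}|_\rho\,\rho^j\le N\rho^j.
\]
Hence the coefficient of $u^le_k$ is at most $N\rho^{j-l}\le N\rho^{-(n-1)}\le Ns^{1-n}$. The diagonal term $j/n$ has norm at most $1$, since $|n|=1$ because $(n,p)=1$. The matrix entries in this basis are therefore bounded by $\max\{1,s^{1-n}N\}=N'$.

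If the basis $\ee_\rho$ is global, the basis $\{u^je_i\}$ is global, which gives the last claim. The only delicate point is the bookkeeping of the norms $\rho^{j-l}$: the loss factor $s^{1-n}$ comes exactly from the worst ratio $\rho^{-(n-1)}$, and it is bounded uniformly because $\rho\ge s$. We also need that the unnormalized basis $u^j$ is the right choice; normalizing by $\rho^{-j}$ is not possible within $R_{\rho_t}$ unless $\rho\in|K^\times|$, which is why we accept the factor $s^{1-n}$.
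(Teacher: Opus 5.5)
Your proposal is correct and is essentially the paper's argument. Both use the power basis $u^je_i$ and the orthogonality of \cref{lem:tamebasis} to bound the coefficients by $N\rho^{j-l}\le N\rho^{1-n}\le Ns^{1-n}$ (the paper phrases this via the entries $A_k(t)$ and $tA_k(t)$), and both note that the diagonal term $j/n$ has norm at most one because $|n|=1$.
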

\begin{proof}
Use the basis $u^j e_l$. If $A(u)=\sum_{k=0}^{n-1}A_k(t)u^k$ is the upper connection matrix, \cref{lem:tamebasis} gives
\[
 |A_k|_{\rho_t}\le\rho^{-k}N.
\]
Multiplication by $A(u)$ on the power basis has entries of the form $A_k(t)$ or $tA_k(t)$, so its norm is at most $\rho^{1-n}N$. Differentiating $u^j$ contributes the diagonal entries $j/n$, of norm at most one. Since $|n|=1$ and $\rho\ge s$, the asserted bound follows.
\end{proof}

\begin{remark}[Frobenius under tame pushforward]\label{rem:tamefrob}
Use the compatible standard lifts $u\mapsto u^q$ and $t\mapsto t^q$, with $q=p^{m}$. In the power basis, relative Frobenius has columns $u^{qj}$, $0\le j<n$. Because $(q,n)=1$, its matrix is monomial with nonzero entries powers of $t$. The matrix and its inverse are analytic on every relevant annulus. Thus \cref{lem:frobpush} carries any auxiliary Frobenius through a tame power map without shrinking the model beyond the radius change $s\mapsto s^n$.
\end{remark}

\subsection{Effective wild pushforward}\label{subsec:wildsetup}
\subsubsection{Notation and setup}
We realize each cyclic degree-$p$ step by an explicit annular cover and estimate its pushforward connection in a power basis. The lifting construction is the degree-$p$ Kummer--Artin--Schreier deformation of Sekiguchi--Oort--Suwa \cite{SOS89}.
Set $\pi=\pi_0$, the first Lubin--Tate parameter fixed in \cref{subsec:coefficients}; thus $\pi^{p-1}=-p$. A totally ramified cyclic degree-$p$ extension of $k((x))$ has a reduced Artin--Schreier equation
\[
 y^p-y+\bar P(x^{-1})+c=0,
\]
where $\bar P\in T k[T]$ has degree $m$ prime to $p$ and $c\in k$. The positive-power part is removed by convergence in $k[[x]]$, and perfection removes negative exponents divisible by $p$. A finite residue extension removes the constant part by adjoining a root of its Artin--Schreier equation. With $c=0$ and $m\alpha-p\beta=1$, the element $y^{-\alpha}x^{-\beta}$ is a uniformizer: in the upper normalized valuation, $v(x)=p$ and $v(y)=-m$.

Let $\bar w_i$ denote residue coordinates and consider a tower with $\nu\ge0$ totally ramified degree-$p$ steps
\[
 k((\bar w_\nu))\subset k((\bar w_{\nu-1}))\subset\cdots\subset k((\bar w_0)).
\]
For $0\le i<\nu$, write the $i$th step as
\[
 y_i^p-y_i+\bar P_{i+1}(\bar w_{i+1}^{-1})=0,
\]
where $\bar P_{i+1}\in T k[T]$ has degree $m_{i+1}$ prime to $p$. Choose nonnegative integers $\alpha_{i+1},\beta_{i+1}$ with
\[
 m_{i+1}\alpha_{i+1}-p\beta_{i+1}=1.
\]
Choose coefficientwise Teichm\"uller lifts $P_{i+1}\in T\OO_K[T]$, with unit leading coefficient, and lift the equations by
\begin{equation}\label{eq:wildequation}
 (1+\pi y_i)^p=1+p\pi P_{i+1}(w_{i+1}^{-1}),\qquad
 w_i=y_i^{-\alpha_{i+1}}w_{i+1}^{-\beta_{i+1}}.
\end{equation}
After division by $p\pi$, the reduction of the first equation is the stated Artin--Schreier equation.

\subsubsection{The annular collar}
\begin{theorem}[Uniform annular collar]\label{thm:collar}
Let $s\in(0,1)$ and $s_i=s^{1/p^{\nu-i}}$. Assume
\[
 s_{i+1}^{m_{i+1}}>|\pi|^p\qquad(0\le i<\nu).
\]
Then \eqref{eq:wildequation} defines a finite \'etale degree-$p$ map
\[
 g_i:A_{K,w_i}[s_i,1)\longrightarrow A_{K,w_{i+1}}[s_{i+1},1),
\]
with $w_i$ an analytic coordinate and
\[
 |w_{i+1}|=|w_i|^p,\qquad |y_i|=|w_i|^{-m_{i+1}}.
\]
Taking the direct limit over outer collars recovers the lifted Robba-ring tower.
\end{theorem}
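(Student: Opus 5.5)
The plan is to treat one step $g_i$ at a time, working over the base annulus $A_{K,w_{i+1}}[s_{i+1},1)$, and then compose the $\nu$ steps. The radii $s_i=s^{1/p^{\nu-i}}$ are set up so that $s_i^p=s_{i+1}$. So once $|w_{i+1}|=|w_i|^p$ is proved, the collars match up automatically.

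\emph{Step 1: the size of $y_i$ on each circle.} Fix $\rho\in[s_{i+1},1)$, write $m=m_{i+1}$, $P=P_{i+1}$, and work over $R_\rho$ in $w_{i+1}$. Expand the binomial in \eqref{eq:wildequation} and divide by $p\pi$. Using $\pi^{p-1}=-p$, the equation becomes
\[
 y_i - y_i^p+\sum_{k=2}^{p-1}\tbinom pk\frac{\pi^{k-1}}{p}\,y_i^k = P(w_{i+1}^{-1}).
\]
The right side has $|P(w_{i+1}^{-1})|_\rho=\rho^{-m}$, because the leading coefficient is a unit and the Teichm\"uller lifts are integral. The coefficient of $y_i^k$ for $2\le k\le p-1$ has norm $|\pi|^{k-1}<1$. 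Read this as a Newton polygon on the circle: the terms $y_i^p$ and $P$ dominate, so every root satisfies $|y_i|=\rho^{-m/p}>1$. Since $(m,p)=1$, this slope is not in $\rho^{\mathbb Z}$. Hence the polynomial is irreducible over $R_\rho$, and we get a degree-$p$ extension algebra on each circle.

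\emph{Step 2: étaleness, which is where the hypothesis is used.} The $y$-derivative of $(1+\pi y)^p-1-p\pi P$ is $p\pi(1+\pi y)^{p-1}$. So the cover is étale exactly where $1+\pi y_i$ is a unit. From $|y_i|=\rho^{-m/p}$, we have $|\pi y_i|<1$ if and only if $\rho^m>|\pi|^p$. The assumption $s_{i+1}^{m}>|\pi|^p$ gives this on the whole collar, so $1+\pi y_i$ is a unit there. The discriminant of the monic degree-$p$ algebra $\Gamma(A[s_{i+1},1),\OO)[y_i]$ is then a unit. This makes it finite free étale of degree $p$ over the base ring.

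\emph{Step 3: $w_i$ is a coordinate.} Set $w_i=y_i^{-\alpha}w_{i+1}^{-\beta}$ with $m\alpha-p\beta=1$. Then $|w_i|=\rho^{m\alpha/p-\beta}=\rho^{1/p}$. To show the algebra is the ring of Laurent series in $w_i$ on $A[s_i,1)$, I will construct the inverse directly. Write $y_i=w_i^{-m}(1+\epsilon)$ and $w_{i+1}=w_i^{p}(1+\eta)$ with unknown $\epsilon,\eta$, and substitute into the equation and into the definition of $w_i$. This gives a fixed-point problem for $(\epsilon,\eta)$. Its perturbation terms have $|\cdot|_{\rho^{1/p}}$ bounded by $\max\{|\pi|,\rho^{m(p-1)/p},\dots\}<1$, uniformly in $\rho$ in the collar, so it contracts on each circle. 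A uniform contraction constant lets the fixed-point solutions glue to analytic series on the whole annulus $A_{K,w_i}[s_i,1)$.

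This gives an algebra map from the degree-$p$ étale algebra over the base to the Laurent series in $w_i$, which is an isomorphism on each circle. Both sides are finite free étale of rank $p$: on the Laurent side, $1,w_i,\dots,w_i^{p-1}$ is an orthogonal basis, by the same argument as \cref{lem:tamebasis} with $w_{i+1}\approx w_i^p$. Then \cref{lem:etalecomparison}, via the injection of the collar ring into its Robba ring, promotes the circlewise identification to an isomorphism on the whole collar.

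\emph{Step 4: the Robba-ring tower.} Reducing modulo $\mathfrak m_K$ recovers the residue Artin--Schreier equation and the residue uniformizer $\bar y^{-\alpha}\bar w_{i+1}^{-\beta}$. Hence taking the direct limit over outer collars gives the henselian lift of the residue tower in \cref{subsec:wildsetup}. Uniqueness of finite étale lifts over $\mathcal R^{\mathrm{int}}$ identifies it with the lifted tower.

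I expect Step 3 to be the main obstacle: proving that $w_i$ is a global analytic coordinate on the whole collar, with no extra components and with bounds uniform up to the inner radius. I would first try the explicit contraction in the $|\cdot|_{\rho^{1/p}}$ norms and rely on \cref{lem:etalecomparison} for the global comparison.
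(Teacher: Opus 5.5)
Your Steps 1, 2 and 4 agree with the paper. Leading terms give $|y_i|^p=|w_{i+1}|^{-m}$, and \'etaleness comes from $\mathcal Q'(y)=(1+\pi y)^{p-1}$ being a unit exactly when $\rho^m>|\pi|^p$. (The irreducibility claim in Step 1 is unnecessary, since a monic equation of degree $p$ already gives rank $p$. Its value-group justification also fails when $\rho\in|K^\times|^{\Q}$, where $\rho^{-m/p}$ can lie in $|K^\times|\rho^{\mathbb Z}$; for a uniformizer $\varpi$, take $\rho=|\varpi|^{p/(m+kp)}$ with $k$ large.)

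The gap is in Step 3, as you suspected, and it has three concrete defects.

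First, your ansatz $y_i=w_i^{-m}(1+\epsilon)$, $w_{i+1}=w_i^p(1+\eta)$ has the wrong leading constants. Put $\upsilon=w_{i+1}^my_i^p$, and let $a_m$ be the leading coefficient of $P$. Your two relations force $1+\epsilon=\upsilon^{-\beta}$ and $1+\eta=\upsilon^{\alpha}$. The balance $-y_i^p\approx a_mw_{i+1}^{-m}$ forces $|\upsilon+a_m|<1$. Since $\gcd(\alpha,\beta)=1$, one of $\epsilon,\eta$ has norm $1$ unless $\bar a_m=-1$; for instance $m=1$, $\alpha=1$, $\beta=0$ gives $\eta\approx-a_1-1$. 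At $\epsilon=\eta=0$, i.e.\ $\upsilon=1$, the equation multiplied by $w_{i+1}^m$ is off by $-1-a_m$ up to smaller terms. So there is no small perturbation to contract.

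Second, the relative sizes of the correction terms tend to $1$ as $\rho\to1$. These are $\rho^{m(p-1)/p}$ for $y_i$ against $y_i^p$, and $\rho^{m-l}$ for the lower coefficients of $P$. No contraction constant is uniform on the half-open collar, so you must solve on closed subannuli and glue by uniqueness.

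Third, the circlewise isomorphism and the basis $1,w_i,\dots,w_i^{p-1}$ are asserted without proof. \Cref{lem:tamebasis} does not apply verbatim, because $w_{i+1}/w_i^p$ is not constant. You also do not need the Laurent side to be \'etale. In \cref{lem:etalecomparison}, $\operatorname{disc}(B)=(\det T)^2\operatorname{disc}(C)$ with $\operatorname{disc}(B)$ a unit already forces $\det T$ to be a unit.

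The paper avoids all three issues by working with the single unknown $\upsilon$. With $z=y^{-\alpha}x^{-\beta}$ one has $x=z^p\upsilon^\alpha$ and $y=z^{-m}\upsilon^{-\beta}$. Multiplying the covering equation by $x^m$ gives $\mathcal P(\upsilon)=-\upsilon-a_m+z\mathcal H(z,\upsilon)=0$ with $\mathcal H\in\OO_K[z,U]$. This uses $\alpha,\beta\ge0$, which make every $\upsilon$-exponent nonnegative and every $z$-exponent positive.

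Hensel's lemma in $\OO_K[[z]]$ for the ideal $(z)$, started at $-a_m$, gives a unique $\upsilon(z)\in-a_m+z\OO_K[[z]]$. The starting point $-a_m$ is exactly the constant your ansatz dropped. Integral coefficients give convergence and $|\upsilon(z)|_\rho=1$ on the whole disc $|z|<1$ at once, with no radius-dependent estimate.

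For an arbitrary point of the cover, Hensel uniqueness over $\OO_{K_1}$ shows that $z\mapsto(z^p\upsilon^\alpha,z^{-m}\upsilon^{-\beta})$ and $(x,y)\mapsto y^{-\alpha}x^{-\beta}$ are mutually inverse. So the paper needs neither finite freeness of the Laurent ring nor \cref{lem:etalecomparison}, and uses the collar hypothesis only for \'etaleness.

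Your route can be repaired. Insert the constants $(-a_m)^{-\beta}$ and $(-a_m)^{\alpha}$, solve on closed subannuli, and supply the circle and freeness arguments. After these repairs it is essentially the same Hensel argument, carried out in circle norms.
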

\begin{proof}
Fix a step and write $x=w_{i+1}$, $y=y_i$, $m=m_{i+1}$, $P=P_{i+1}$, $\alpha=\alpha_{i+1}$, $\beta=\beta_{i+1}$. Put
\[
 \mathcal Q(Y)=\frac{(1+\pi Y)^p-1}{p\pi}
      =-Y^p+\sum_{j=1}^{p-1}q_jY^j,
\]
where $\pi^{p-1}/p=-1$ and $q_j\in\OO_K$. Write $P(T)=a_mT^m+\cdots$, with $a_m\in\OO_K^\times$. On $|x|<1$, $|P(x^{-1})|=|x|^{-m}>1$. Hence a solution of $\mathcal Q(y)=P(x^{-1})$ has $|y|>1$, and comparison of the leading terms gives $|y|^p=|x|^{-m}$.

Set $z=y^{-\alpha}x^{-\beta}$ and $\upsilon=x^my^p$. Since $m\alpha-p\beta=1$,
\[
 x=z^p\upsilon^\alpha,\qquad y=z^{-m}\upsilon^{-\beta},\qquad |\upsilon|=1.
\]
Substituting in the equation and multiplying by $x^m$ gives
\[
 \mathcal P(\upsilon)=0,\qquad \mathcal P(U)=-U-a_m+z\mathcal H(z,U)\in\OO_K[z][U].
\]
Indeed, since $\alpha,\beta\ge0$ and $m\alpha-p\beta=1$, the nonleading terms become
\[
 x^my^j=z^{m(p-j)}\upsilon^{1+\beta(p-j)}\quad(j<p),\qquad
 x^{m-l}=z^{p(m-l)}\upsilon^{\alpha(m-l)}\quad(l<m).
\]
Their $\upsilon$-exponents are nonnegative and their $z$-exponents are positive, proving the polynomial assertion.

We use Hensel's lemma in the following form \cite[Theorem~7.3]{Eisenbud}: if a ring $R$ is complete with respect to an ideal $J$, and $F\in R[U]$, $a\in R$ satisfy $F(a)\in J$ and $F'(a)\in R^\times$, then $F$ has a unique root in $a+J$. Apply it with $R=\OO_K[[z]]$, $J=(z)$, and $a=-a_m$. Here $\mathcal P(-a_m)=z\mathcal H(z,-a_m)\in J$, and
\[
 \mathcal P'(-a_m)=-1+z\frac{\partial\mathcal H}{\partial U}(z,-a_m)
\]
is a unit. We obtain a unique $\upsilon(z)\in-a_m+z\OO_K[[z]]$ with $\mathcal P(\upsilon(z))=0$. Its integral coefficients give convergence on $|z|<1$, and
\[
 |\upsilon(z)+a_m|_\rho\le\rho<1,\qquad |\upsilon(z)|_\rho=1
 \quad(0<\rho<1).
\]
In particular $\upsilon(z)$ and its inverse are analytic there.

Conversely, any point $(x,y)$ of the cover has $0<|z|<1$ and $|\upsilon|=1$. Its defining equation gives $\upsilon+a_m=z\mathcal H(z,\upsilon)$, so $\upsilon\in-a_m+z\OO_{K_1}$ over a complete valued extension $K_1/K$. Hensel uniqueness over $\OO_{K_1}$ therefore identifies it with $\upsilon(z)$. Hence the analytic maps
\[
 z\longmapsto\bigl(z^p\upsilon(z)^\alpha,\ z^{-m}\upsilon(z)^{-\beta}\bigr),
 \qquad (x,y)\longmapsto y^{-\alpha}x^{-\beta}
\]
are inverse: the first satisfies the covering equation, and $m\alpha-p\beta=1$ gives the other composition. Since $|\upsilon(z)|=1$,
\[
 |x|=|z|^p,\qquad |y|=|z|^{-m}.
\]
Thus the part of the cover over $A_{K,w_{i+1}}[s_{i+1},1)$ is the annulus $A_{K,w_i}[s_i,1)$ with coordinate $w_i=z$.

The equation is monic of degree $p$ after multiplication by $-1$, so the projection is finite free of degree $p$. The collar hypothesis gives $|\pi y|<1$, and
\[
 \mathcal Q'(y)=(1+\pi y)^{p-1}
\]
is a unit. Thus the projection is \'etale. All constructions commute with restriction to smaller outer collars.
\end{proof}

\subsubsection{Orthogonality and multiplication matrices}
Fix a step, and use the lower radius $|w_{i+1}|=\rho$. Put
\[
 m=m_{i+1},\quad y=y_i,\quad v=1+\pi y,
 \quad \eta_v=|\pi|\rho^{-m/p}.
\]
On the collar $\rho^m>|\pi|^p$, one has $|y|=\rho^{-m/p}$ and $|\pi|<\eta_v<1$.

\begin{lemma}\label{lem:wildorthogonal}
The basis $1,y,\ldots,y^{p-1}$ is orthogonal over the lower circle ring:
\[
 \left|\sum_{j=0}^{p-1}a_jy^j\right|_{\rho^{1/p}}
 =\max_j |a_j|_\rho\rho^{-jm/p}.
\]
The same holds after rescaling the basis vectors by nonzero constants.
\end{lemma}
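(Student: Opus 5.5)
The plan is to mimic the tame orthogonality argument of \cref{lem:tamebasis}, but now using the annular coordinate $z=w_i$ from \cref{thm:collar} instead of grouping exponents modulo a power. At the upper radius $|w_i|=\rho^{1/p}$, each $y^j$ equals $z^{-jm}\upsilon(z)^{-j\beta}$. Here $\upsilon(z)$ is a unit whose reduction is the constant $-\bar a_m$, with $|\upsilon(z)+a_m|_{\rho'}\le\rho'<1$. Also $|y|=\rho^{-m/p}$ at this radius, so the claimed formula is the assertion that the norm on the upper circle ring is the max norm in the basis $y^j$. This is the same as saying the basis becomes orthogonal after rescaling.

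The first step is to reduce to an orthonormality statement for the normalized elements. Fix $c\in K$, or pass to a complete extension $K_1$ containing suitable scalars; Gauss norms are unchanged by such an extension. Normalize $\tilde a_j=a_jy^j$ by scaling so that the maximal term has norm one. Orthogonality then means that a sum of terms of norm at most one, with the maximum equal to one, has norm exactly one. This in turn reduces to linear independence of the reductions over the residue ring of the lower circle.

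The second step is to compute these reductions. The residue ring of the upper circle $R_{\rho^{1/p}}$, after scaling, is $\kappa[\bar z,\bar z^{-1}]$, where $\kappa$ is the residue field of $K_1$ and we choose $K_1$ so that $\rho\in|K_1^\times|$. Scaling $x=w_{i+1}$ by a constant of norm $\rho$ gives reduction $\bar x=(\text{unit})\,\bar z^{p}\,\bar\upsilon^{\alpha}$, with $\bar\upsilon=-\bar a_m$ constant. So the lower residue ring is $\kappa[\bar z^{\pm p}]$. Similarly $y^j$ scaled has reduction a nonzero constant times $\bar z^{-jm}$. Since $(m,p)=1$, the exponents $-jm$ for $0\le j<p$ lie in distinct classes mod $p$. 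Hence the reductions $\bar z^{-jm}$ are linearly independent over $\kappa[\bar z^{\pm p}]$: the summands occupy disjoint exponent classes, exactly as in \cref{lem:tamebasis}. This gives the equality.

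The main obstacle is to justify that the lower coefficients $a_j(x)$ reduce correctly. Terms of $a_j$ involve powers of $x=z^p\upsilon^\alpha$, and $\upsilon$ is not constant. I would handle this by noting that $|\upsilon^\alpha-(-a_m)^\alpha|_{\rho'}<1$. The Gauss norm of $a_j(x)$ on the upper circle then equals $|a_j|_\rho$, and its reduction is obtained by replacing $\bar x$ with a constant times $\bar z^p$. Both facts follow from multiplicativity of the Gauss norm and the expansion $a_j=\sum_n c_nx^n$: each term has the same norm and the same leading reduction as $c_n(-a_m)^{\alpha n}z^{pn}$. The last assertion is immediate, since rescaling by nonzero constants scales each term uniformly and leaves both sides unchanged.
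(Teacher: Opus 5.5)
Your proof is correct, but it follows a different route from the paper's.

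\textbf{The paper's route.} It first treats radii $\rho\notin|K^\times|^{\Q}$. There every nonzero series has a strictly dominant monomial, so the summands $a_jy^j$ have pairwise distinct norms: equality would force $p(n'-n)+(j-j')m=0$, which is impossible since $p\nmid(j-j')m$. The ultrametric inequality is therefore an equality. The paper then reaches the remaining radii by continuity of Gauss norms in $\rho$ for Laurent-polynomial coefficients, followed by density of Laurent polynomials in each circle ring.

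\textbf{Your route.} You work at a type-2 point directly at every radius. The key input is the finer output of \cref{thm:collar} that $\upsilon$ reduces to the constant $-\bar a_m$. Hence $\bar x$ and $\bar y$ are, up to nonzero constants, the monomials $\bar z^{p}$ and $\bar z^{-m}$, and linear independence over $\kappa[\bar z^{\pm p}]$ reduces to counting exponent classes modulo $p$.

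\textbf{Comparison.} Your argument avoids the continuity-and-density limit. It also shows explicitly that the lower circle ring sits isometrically in the upper one, $|a_j(x)|_{\rho^{1/p}}=|a_j|_\rho$; the paper gets this only at generic radii, from the dominant-monomial property. The paper's argument, in turn, needs only $|\upsilon|=1$ on each circle and no change of coefficient field.

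\textbf{One correction.} To normalize $z$ and to scale each $y^j$ and the maximal term to norm one, you need $\rho^{1/p}\in|K_1^\times|$, not merely $\rho\in|K_1^\times|$. When $\rho\notin|K^\times|^{\Q}$, no algebraic extension achieves this, so $K_1$ must be a transcendental complete extension. For example, take the completion of $K(T)$ for the Gauss norm with $|T|=\rho^{1/p}$. This is harmless, since Gauss norms of series with coefficients in $K$ do not change under such an extension.
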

\begin{proof}
First suppose $\rho\notin|K^\times|^{\Q}$. Each nonzero lower-ring Laurent series has a unique dominant monomial. The norms of distinct nonzero summands $a_jy^j$ cannot agree: such an equality would imply that a nonzero integral power of $\rho^{1/p}$ belongs to $|K^\times|$, unless the Laurent exponents differ by $(j-j')m/p$, which is not an integer for $0<|j-j'|<p$. The ultrametric inequality is therefore an equality.

For Laurent-polynomial coefficients, continuity of Gauss norms and density of radii outside $|K^\times|^{\Q}$ give the identity at every radius in the collar. At a fixed radius, Laurent polynomials are dense in the circle ring, so the general statement follows by approximation. Rescaling does not affect orthogonality.
\end{proof}

\begin{lemma}\label{lem:mult}
If $F_v(x)$ is the matrix of multiplication by $x$ on $1,v,\ldots,v^{p-1}$, then on the collar
\[
 |F_v(x)|_\rho\le p\,|x|_{\rho^{1/p}}.
\]
The same estimate holds for an upper-ring matrix $x$, interpreted blockwise.
\end{lemma}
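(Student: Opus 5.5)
The plan is to compare the basis $1,v,\ldots,v^{p-1}$ with the rescaled basis $1,\pi y,\ldots,(\pi y)^{p-1}$. By \cref{lem:wildorthogonal}, the latter is orthogonal over the lower circle ring. The two bases are related by unipotent triangular matrices with integer entries, namely
\[
 v^j=\sum_{l\le j}\binom{j}{l}(\pi y)^l,\qquad
 (\pi y)^l=(v-1)^l=\sum_{k\le l}(-1)^{l-k}\binom{l}{k}v^k .
\]
Since binomial coefficients have absolute value at most one, passing from coordinates in the $(\pi y)$-basis to coordinates in the $v$-basis does not increase the maximum of the coordinate norms.

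First I will bound the norm of each $v^j$. On the collar one has $|\pi y|_{\rho^{1/p}}=\eta_v<1$, so $|v|_{\rho^{1/p}}=1$. The Gauss norm on the upper circle ring is multiplicative, so $|v^j|=1$ and
\[
 |xv^j|_{\rho^{1/p}}=|x|_{\rho^{1/p}} .
\]
Next I expand $xv^j=\sum_l b_{lj}(\pi y)^l$ with $b_{lj}$ in the lower circle ring. Orthogonality in the rescaled form of \cref{lem:wildorthogonal} gives
\[
 |b_{lj}|_\rho\,\eta_v^{\,l}\le|x|_{\rho^{1/p}} ,
\]
and hence
\[
 |b_{lj}|_\rho\le\eta_v^{-(p-1)}|x|_{\rho^{1/p}}
 \qquad\text{for } 0\le l\le p-1 .
\]

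I then convert to the $v$-basis. The column $j$ of $F_v(x)$ has entries $c_{kj}=\sum_l(-1)^{l-k}\binom{l}{k}b_{lj}$, so $|c_{kj}|_\rho\le\max_l|b_{lj}|_\rho$. It remains to evaluate the factor $\eta_v^{-(p-1)}$:
\[
 \eta_v^{-(p-1)}=|\pi|^{-(p-1)}\rho^{m(p-1)/p}=p\,\rho^{m(p-1)/p}\le p .
\]
Here I use $|\pi|^{p-1}=|p|=p^{-1}$ and $\rho<1$. This gives $|F_v(x)|_\rho\le p\,|x|_{\rho^{1/p}}$.

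For an upper-ring matrix $x$, I apply the same argument entrywise. The norm of a matrix is the maximum over its entries, so the blockwise estimate follows directly.

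The main point needing care is the use of orthogonality with the right normalization. The $v$-basis itself is not orthogonal, so the estimate must go through the $(\pi y)$-basis. The loss factor $\eta_v^{-(p-1)}$ is exactly where the constant $p$ comes from, and it is controlled only because $|\pi|^{p-1}=p^{-1}$. I also need $|v|=1$ on the collar, which follows from the collar hypothesis $\rho^m>|\pi|^p$, since this gives $\eta_v<1$.
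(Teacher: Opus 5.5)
Your proof is correct and follows essentially the same route as the paper: you pass to the orthogonal basis $1,\pi y,\ldots,(\pi y)^{p-1}$, lose at most the weight factor $\eta_v^{-(p-1)}=p\,\rho^{m(p-1)/p}\le p$, and return to the $v$-basis via the integral binomial change of basis. The only difference is cosmetic. You expand $xv^j$ directly using $|v^j|_{\rho^{1/p}}=1$, so you need only the inverse binomial matrix, whereas the paper bounds the multiplication matrix in the $(\pi y)$-basis and then conjugates by the binomial matrix and its inverse.
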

\begin{proof}
In the orthogonal basis $1,\pi y,\ldots,(\pi y)^{p-1}$, the weights are $1,\eta_v,\ldots,\eta_v^{p-1}$. They lie between $|\pi|^{p-1}=p^{-1}$ and $1$. Thus the analytic norm and the unweighted coordinate norm differ by a factor at most $p$. Multiplication by $x$ has analytic operator norm $|x|$, giving the displayed coordinate bound.

The change from $(\pi y)^j$ to $v^j=(1+\pi y)^j$ is the binomial matrix $(\binom{k}{j})_{j,k}$. Both it and its inverse have integral entries, so this change is an isometry for the coordinate sup norm. The same bound therefore holds in the $v$-basis. Taking the maximum over blocks proves the matrix version.
\end{proof}

\subsubsection{Comparison of logarithmic derivations}
Write $x=w_{i+1}$ and $w=w_i$. The unique extension of $\del_x$ to the upper algebra has the form
\[
 \del_x=\gamma_i\del_w,
\]
where differentiating \eqref{eq:wildequation} gives
\begin{align}
 \frac{\del_x y}{y}
  &=-\frac{x^{-1}P'_{i+1}(x^{-1})}
              {y(1+\pi y)^{p-1}},\notag\\
 \gamma_i
  &=\frac{\alpha_{i+1}x^{-1}P'_{i+1}(x^{-1})}
              {y(1+\pi y)^{p-1}}-\beta_{i+1}.\label{eq:derivationfactor}
\end{align}
Since $|1+\pi y|=1$ and $|\del_x P_{i+1}|\le|P_{i+1}|=\rho^{-m}$,
\begin{equation}\label{eq:gamma}
 |\gamma_i|_{\rho^{1/p}}\le\rho^{-m(p-1)/p}.
\end{equation}
The pushforward connection is therefore $\nabla_{i+1}=\gamma_i\nabla_i$.

\subsubsection{The induction step}
Let $M_i$ have a global basis $e_1,\ldots,e_{r_i}$, connection matrix $A_i$, and highest break $B_i$. Use $v_i^k e_l$, $0\le k<p$, as the pushforward basis. The connection matrix is the sum of the multiplication blocks $F_{v_i}(\gamma_i(A_i)_{jl})$ and the diagonal term from differentiating $v_i^k$. Differentiation gives
\[
 \frac{\del_{w_{i+1}}v_i}{v_i}
 =-\frac{\pi w_{i+1}^{-1}P'_{i+1}(w_{i+1}^{-1})}
          {1+p\pi P_{i+1}(w_{i+1}^{-1})},
\]
which belongs to the lower ring. Its action on each $v_i^k e_l$ is scalar, so the contribution is diagonal. Its norm, together with \cref{lem:mult} and \eqref{eq:gamma}, gives
\begin{equation}\label{eq:wildrecurrence}
 |A_{i+1}|_\rho\le
 \max\left\{|\pi|\rho^{-m_{i+1}},\
 p\rho^{-m_{i+1}(p-1)/p}|A_i|_{\rho^{1/p}}\right\}.
\end{equation}

\begin{proposition}[Effective wild pushforward]\label{prop:wild}
Fix $c\in[|\pi|^p,1)$, where $|\pi|^p=p^{-p/(p-1)}$. Let $M_0$ be rank one, with highest break $B_0$, and suppose it has a global-basis witness for $(c^{1/(B_0+1)},N_0)$-boundedness. Define
\[
 M_{i+1}=g_{i*}M_i,\qquad
 B_{i+1}=\max\left\{m_{i+1},\frac{B_i+(p-1)m_{i+1}}p\right\}.
\]
Then $M_\nu$ has rank $p^\nu$, highest break $B_\nu$, and a global-basis witness on $A_{K,w_\nu}[c^{1/(B_\nu+1)},1)$ with connection bound
\[
 N_\nu\le(p/c)^\nu\max\{1,N_0\}.
\]
Its basis is
\[
 \{v_{\nu-1}^{k_{\nu-1}}\cdots v_0^{k_0}e:
               0\le k_i<p,\ 0\le i<\nu\},
\]
restricted at each stage to the indicated annulus, with basis $\{e\}$ when $\nu=0$.
\end{proposition}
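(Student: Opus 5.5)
We argue by induction on $i$, tracking three quantities at each stage: the rank (which multiplies by $p$ at each step since $g_i$ is finite free of degree $p$ by \cref{thm:collar}), the highest break, and the pair (inner radius, connection bound). The break assertion is immediate from \cref{lem:inductionbreak}, applied to the degree-$p$ cyclic residue extension of break $m_{i+1}$ (the Artin--Schreier conductor of $\bar P_{i+1}$ of degree $m_{i+1}$ prime to $p$), via the correspondence between pushforward and induction recalled in \cref{subsec:monodromy}. The basis assertion is also formal: the pushforward basis at step $i$ is $v_i^k$ times the previous basis, and iterating gives the displayed monomials.

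The analytic claim requires two checks. First, the collar: we must show that the annulus $A_{K,w_i}[c^{1/(B_i+1)},1)$ contains the preimage of $A_{K,w_{i+1}}[c^{1/(B_{i+1}+1)},1)$, and that the hypothesis of \cref{thm:collar} holds there. Since $|w_{i+1}|=|w_i|^p$, the radius at level $i$ is $\rho^{1/p}$ when the lower radius is $\rho\ge c^{1/(B_{i+1}+1)}$. We therefore need $p(B_i+1)\ge B_{i+1}+1$, which is clear because $B_{i+1}\le\max\{m_{i+1},B_i\}$ and $B_i\ge m_{i+1}$. (Alternatively, the direct bound $pB_{i+1}\ge B_i+(p-1)m_{i+1}$ gives $B_{i+1}+1\le p(B_i+1)$ as well.) For the collar hypothesis we need $\rho^{m_{i+1}}>|\pi|^p$. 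Since $m_{i+1}\le B_{i+1}<B_{i+1}+1$ and $c\ge|\pi|^p$, we get $\rho^{m_{i+1}}\ge c^{m_{i+1}/(B_{i+1}+1)}>c\ge|\pi|^p$. Here we use $c<1$ and the exponent being strictly less than one, which gives strict inequality whenever $m_{i+1}\ge1$.

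Second, the connection bound, via \eqref{eq:wildrecurrence}. The inductive hypothesis gives $|A_i|_{\rho^{1/p}}\le N_i$. The key estimate is $\rho^{-m_{i+1}(p-1)/p}\le c^{-1}$. Since $m_{i+1}(p-1)/p<m_{i+1}\le B_{i+1}+1$, we have $\rho^{-m_{i+1}(p-1)/p}\le c^{-m_{i+1}(p-1)/(p(B_{i+1}+1))}\le c^{-1}$. Similarly the first term satisfies $|\pi|\rho^{-m_{i+1}}\le|\pi|c^{-1}\le c^{-1}$. Hence
\[
 N_{i+1}\le\max\{c^{-1},(p/c)N_i\}\le(p/c)\max\{1,N_i\}.
\]
An easy induction, using that $(p/c)\max\{1,N_i\}\ge1$, gives $N_\nu\le(p/c)^\nu\max\{1,N_0\}$.

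The step I expect to need the most care is verifying that \eqref{eq:wildrecurrence} is legitimately applied with a global basis on the whole annulus. This involves two points: the orthogonality of \cref{lem:wildorthogonal} must hold on the full collar, and the diagonal term $\del v_i/v_i$ must be analytic there, which uses $|p\pi P_{i+1}|<1$ on the collar. Both follow from the collar condition checked above. The global-basis property then follows because the $v_i^k$ are global sections of the upper algebra.
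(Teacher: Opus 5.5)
Your plan follows the paper's proof step for step: the break via \cref{lem:inductionbreak}, the collar check $\rho^{m_{i+1}}\ge c^{m_{i+1}/(B_{i+1}+1)}>c\ge|\pi|^p$, and \eqref{eq:wildrecurrence} with both radius factors at most $c^{-1}$. However, the radius-compatibility step is wrong as written. To use the level-$i$ witness at the upper radius, you need $\rho^{1/p}\ge c^{1/(B_i+1)}$ for every $\rho\ge c^{1/(B_{i+1}+1)}$, that is, $c^{1/(p(B_{i+1}+1))}\ge c^{1/(B_i+1)}$. Because $0<c<1$, this is equivalent to $B_i+1\le p(B_{i+1}+1)$, not to $B_{i+1}+1\le p(B_i+1)$.

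The inequality you state is false in general. Whenever $m_{i+1}>B_i$ one has $B_{i+1}=m_{i+1}$; for instance $p=3$, $B_0=1$, $m_1=7$ gives $p(B_0+1)=6<8=B_1+1$. Your justification relies on $B_i\ge m_{i+1}$, which is not among the hypotheses and fails in exactly this situation. Your parenthetical alternative cannot work either, since the lower bound $pB_{i+1}\ge B_i+(p-1)m_{i+1}$ gives no upper bound on $B_{i+1}$.

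That same lower bound is what you actually need. It gives $p(B_{i+1}+1)\ge B_i+(p-1)m_{i+1}+p>B_i+1$, hence $\rho^{1/p}\ge c^{1/(p(B_{i+1}+1))}\ge c^{1/(B_i+1)}$, which is the paper's argument. This containment is also what licenses the input $|A_i|_{\rho^{1/p}}\le N_i$ in \eqref{eq:wildrecurrence}. Once the inequality is corrected, the remainder of your proposal goes through: the collar hypothesis, the analyticity of $\del v_i/v_i$ from $|p\pi|\rho^{-m_{i+1}}<1$, and the iteration $N_{i+1}\le(p/c)\max\{1,N_i\}$.
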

\begin{proof}
For $\nu=0$ the assertion is the hypothesis. Otherwise induct on $i=0,\ldots,\nu-1$. The ranks and bases follow from restriction of scalars; the break assertion is \cref{lem:inductionbreak}. If
\[
 \rho\ge c^{1/(B_{i+1}+1)},
\]
then $p(B_{i+1}+1)>B_i+1$, and hence
\[
 \rho^{1/p}\ge c^{1/(p(B_{i+1}+1))}
          \ge c^{1/(B_i+1)}.
\]
Thus the upper model is defined at the required radius. Also $m_{i+1}\le B_{i+1}$ gives
\[
 \rho^{m_{i+1}}\ge c^{m_{i+1}/(B_{i+1}+1)}>c\ge|\pi|^p,
\]
so \cref{thm:collar} applies at this step, including when $c=|\pi|^p$.

In \eqref{eq:wildrecurrence}, both radius factors are at most $c^{-1}$. Therefore
\[
 N_{i+1}\le\max\{|\pi|c^{-1},pc^{-1}N_i\}
          \le(p/c)\max\{1,N_i\}.
\]
Iteration gives the claim. All bases are global on the restricted annuli.
\end{proof}

\subsection{Frobenius through the wild tower}\label{subsec:wildfrob}
Starting from the standard Frobenius at the bottom of the wild tower, we lift it compatibly to every stage. The resulting top lift need not be the standard one used for the rank-one model. We first construct the lifts, then compare the two rank-one pullbacks, and finally apply finite \'etale base change.

Throughout this subsection the coefficient lift is isometric, fixes $\pi$ and the selected Lubin--Tate parameters, and acts by $p$th powers on Teichm\"uller coefficients.

\begin{remark}\label{rem:compatibility}
At the Robba-ring level, a Frobenius lift on the base extends uniquely to the branch inducing absolute Frobenius on a finite \'etale residue algebra; see \cite[Definition~6.1.1 and Proposition~3.5.5]{Ked05}. The resulting relative Frobenius is an isomorphism, since relative Frobenius for an \'etale morphism in characteristic $p$ is an isomorphism \cite[Tag~0EBS]{Stacks}. Once the comparison map is analytic on a specified collar, \cref{cor:cartesiancollar} supplies its analytic inverse there. In the wild coordinates below, the extended lift is generally not $w_i\mapsto w_i^q$.
\end{remark}

\begin{lemma}[One-step Frobenius lift]\label{lem:onestepfrob}
Consider a degree-$p$ cover from \cref{thm:collar}, with lower coordinate $x$, upper coordinate $w$, and equations
\[
 \mathcal Q(y)=P(x^{-1}),\qquad w=y^{-\alpha}x^{-\beta},\qquad
 \mathcal Q(Y)=\frac{(1+\pi Y)^p-1}{p\pi},\qquad
 \mu\alpha-p\beta=1,
\]
where $\mu=\deg P$. Let its lower model be $A_x[s,1)$, so its upper model is $A_w[s^{1/p},1)$. Suppose an analytic lower lift $\sigma_x$ on $A_x[s^{1/p},1)$ satisfies, for some $D\ge0$,
\[
 |\sigma_x(f)|_\rho=|f|_{\rho^p},\qquad
 \left|\frac{\sigma_x(x)}{x^p}-1\right|_\rho
       \le |\pi|\rho^{-D},\qquad
 |\pi|s^{-(p\mu+D)/p}<1.
\]
Then the Hensel branch inducing absolute Frobenius on the residue cover defines a compatible analytic lift $\sigma_w$ on $A_w[s^{1/p^2},1)$. For $\varrho\ge s^{1/p^2}$ it satisfies
\[
 \left|\frac{\sigma_w(w)}{w^p}-1\right|_\varrho
       \le |\pi|\varrho^{-pD-p(p-1)\mu},
 \qquad |\sigma_w(f)|_\varrho=|f|_{\varrho^p}.
\]
\end{lemma}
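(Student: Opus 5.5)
The plan is to build $\sigma_w$ by first lifting the auxiliary coordinate $y$, and then setting
\[
 \sigma_w(w)=\sigma_w(y)^{-\alpha}\sigma_x(x)^{-\beta}.
\]
Write $P^{\sigma}$ for $P$ with the coefficient lift applied. Since the coefficients of $P$ are Teichm\"uller lifts, $P^\sigma$ has coefficients $a^p$. We look for $Y=\sigma_w(y)$ solving
\[
 \mathcal Q(Y)=P^\sigma\bigl(\sigma_x(x)^{-1}\bigr)
\]
in the upper circle ring at radius $\varrho$, where $\varrho^{p}$ is the corresponding lower radius. The approximate root is $Y_0=y^p$.

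Residually, $\bar{\mathcal Q}(Y)=Y-Y^p$ is additive and commutes with $p$th powers. Hence $\bar{\mathcal Q}(\bar y^p)=\bar P(\bar x^{-1})^p=\bar P^\sigma(\bar x^{-p})$, and $Y_0$ is a root modulo $\pi$. The residue Frobenius on the residue cover sends $\bar y\mapsto\bar y^p$, so a Hensel root near $Y_0$ is exactly the branch singled out in \cref{rem:compatibility}. Uniqueness of that root gives compatibility and the relative Frobenius isomorphism, via \cref{cor:cartesiancollar} once analyticity on the collar is established.

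The quantitative step is Newton's method in the orthogonal basis of \cref{lem:wildorthogonal}. I would estimate the defect
\[
 \mathcal Q(y^p)-P^\sigma(\sigma_x(x)^{-1})
\]
by splitting it into two pieces. The first is $\mathcal Q(y^p)-\mathcal Q(y)^{p}$ plus $P(x^{-1})^p-P^\sigma(x^{-p})$. Every coefficient of these is divisible by $\pi$ (using $q_1=1$, $|q_j|=|\pi|^{j-1}$, $p=-\pi^{p-1}$, and Teichm\"uller coefficients), so this piece is at most $|\pi|$ times a power of $|y|$ of order $|x|^{-p\mu}$. The second piece is $P^\sigma(x^{-p})-P^\sigma(\sigma_x(x)^{-1})$. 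By the hypothesis on $\sigma_x(x)/x^p$ it is at most $|\pi|\rho^{-D}\,|x|^{-p\mu}$.

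The derivative $\mathcal Q'(Y_0)=(1+\pi y^p)^{p-1}$ is a unit precisely when $|\pi||y|^{p}<1$. Together with the size of the defect, this is where the hypothesis $|\pi|s^{-(p\mu+D)/p}<1$ enters. It guarantees a contracting Newton iteration uniformly on $A_w[s^{1/p^2},1)$, and hence an analytic root $\sigma_w(y)=y^p(1+\varepsilon)$. Dividing the defect by $|Y_0|$ gives a bound on $|\varepsilon|_\varrho$ of the form $|\pi|\varrho^{-pD-p(p-1)\mu}$, up to the exponent bookkeeping below.

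Next, $\sigma_w(w)/w^p=(1+\varepsilon)^{-\alpha}\bigl(\sigma_x(x)/x^p\bigr)^{-\beta}$. Both factors are $1+O(|\pi|\cdot)$, and binomial expansion with integral exponents keeps the $|\pi|$ factor. Translating the lower-radius bound $|\pi|\rho^{-D}$ into upper radius $\varrho$ gives the term $\varrho^{-pD}$ for $\rho=\varrho^p$. The $\varepsilon$-term contributes $\varrho^{-p(p-1)\mu}$ after using $|y|=\varrho^{-\mu}$ and $\mu\alpha-p\beta=1$.

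The isometry $|\sigma_w(f)|_\varrho=|f|_{\varrho^p}$ follows from $|\sigma_w(w)-w^p|<|w^p|$. With this, $\sigma_w$ sends each monomial $w^n$ to $w^{pn}$ times a unit $1+O(<1)$. The dominant-monomial argument of \cref{lem:wildorthogonal} then applies first at radii outside $|K^\times|^{\Q}$ and extends by continuity and density. Alternatively one can write $f$ in the basis $y^j$ over the lower ring and use the lower isometry of $\sigma_x$.

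I expect the main obstacle to be the exponent bookkeeping in the defect estimate. One must check that every cross term of $\mathcal Q(y^p)-\mathcal Q(y)^p$ has size at most $|\pi||y|^{p}$ times the unit-derivative scale, rather than something like $|\pi||y|^{p^2}$. I would try first to rewrite everything in the variable $v=1+\pi y$. In that variable $\mathcal Q(Y)=(v^p-1)/(p\pi)$, and the defect becomes a comparison of $v^{p}$-powers, which is much cleaner.
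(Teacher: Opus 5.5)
Your proposal is correct and follows essentially the paper's argument. The shared steps are: Hensel/Newton for $\mathcal Q(z)=P^{\sigma_K}(\sigma_x(x)^{-1})$ near $z=y^p$; the same splitting of the residual into $\mathcal Q(y^p)-\mathcal Q(y)^p$, $P(x^{-1})^p-P^{\sigma_K}(x^{-p})$ and the substitution error, with total $\le|\pi|\rho^{-(p\mu+D)}$; the Taylor-coefficient check in $v=1+\pi y$; and the perturbation and isometry bounds for $\sigma_w(w)=z^{-\alpha}\sigma_x(x)^{-\beta}$. The obstacle you anticipate at the end is not one: the bound $|\pi||y|^{p^2}=|\pi|\rho^{-p\mu}$ you already derived is exactly what the hypothesis $|\pi|s^{-(p\mu+D)/p}<1$ and the exponent $-pD-p(p-1)\mu$ are calibrated for, as your own division by $|Y_0|=\rho^{-\mu}$ shows, and it cannot be improved to $|\pi||y|^{p}$ in general (for $p=2$ one has $\mathcal Q(Y^2)-\mathcal Q(Y)^2=2Y^3-2Y^4$).
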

\begin{proof}
Write $\rho=\varrho^p$. Since $\pi^{p-1}=-p$,
\[
 \mathcal Q(Y)=Y-Y^p+\sum_{j=2}^{p-1}q_jY^j,
 \qquad q_j\in\pi^{j-1}\OO_K.
\]
We solve $\mathcal Q(z)=P^{\sigma_K}(\sigma_x(x)^{-1})$ near $z=y^p$. The residual satisfies
\begin{equation}\label{eq:henselresidual}
 \left|\mathcal Q(y^p)-P^{\sigma_K}(\sigma_x(x)^{-1})\right|_\rho
       \le |\pi|\rho^{-(p\mu+D)}.
\end{equation}
Indeed, $\mathcal Q(Y^p)-\mathcal Q(Y)^p$ belongs to $\pi\OO_K[Y]$ and has degree at most $p^2$, while $|y|_\rho=\rho^{-\mu/p}$. Also $P(x^{-1})^p-P^{\sigma_K}(x^{-p})$ belongs to $p\OO_K[x^{-1}]$ and has degree at most $p\mu$. Finally, substitution of $\sigma_x(x)=x^p(1+E)$, with $|E|_\rho\le|\pi|\rho^{-D}<1$, contributes at most $\rho^{-p\mu}|E|_\rho$.

The right side of \eqref{eq:henselresidual} is strictly less than one on the stated collar. There $|\pi y^p|_\rho<1$. To check the Taylor coefficients at this point, put $v_*=1+\pi y^p$, which has norm one, and write
\[
 \mathcal Q(y^p+\Delta)
       =\frac{(v_*+\pi\Delta)^p-1}{p\pi}.
\]
For $1\le j\le p$, the coefficient of $\Delta^j$ is
$p^{-1}\binom pj\pi^{j-1}v_*^{p-j}$ and has norm at most one; the linear coefficient has norm exactly one. Hensel iteration on each closed subannulus therefore gives a unique solution $z=y^p+\Delta$ with
\[
 |\Delta|_\rho\le|\pi|\rho^{-(p\mu+D)}<1.
\]
Uniqueness makes these analytic solutions agree on overlaps. Set
\[
 \sigma_w(y)=z,\qquad
 \sigma_w(w)=z^{-\alpha}\sigma_x(x)^{-\beta}.
\]
This extends the lower lift and preserves the defining equations. Compatibility with the coordinate maps in \cref{thm:collar} follows from their uniqueness: the two analytic expressions agree at every point over every complete valued extension, and hence agree as analytic functions on each reduced closed subannulus. Moreover,
\[
 \left|\frac{\Delta}{y^p}\right|_\rho
       \le|\pi|\rho^{-D-(p-1)\mu}.
\]
Integer powers of $1+\epsilon$, with $|\epsilon|<1$, do not increase the perturbation norm. Applying this to the formula for $\sigma_w(w)$, and using $\rho=\varrho^p$, gives the asserted bound.

For completeness, put $E_w=\sigma_w(w)/w^p-1$. For every Laurent polynomial $f$,
\[
 |\sigma_w(f)-f^{\sigma_K}(w^p)|_\varrho
       \le |E_w|_\varrho\,|f|_{\varrho^p}.
\]
The standard substitution on the right is isometric, and $|E_w|_\varrho<1$, so
\begin{equation}\label{eq:liftisometry}
 |\sigma_w(f)|_\varrho=|f|_{\varrho^p}.
\end{equation}
Laurent truncations converge uniformly on each closed subannulus of the model. Applying this estimate on the corresponding subannulus with radii raised to the $1/p$ power extends the substitution continuously to every analytic $f$. It proves both analyticity and \eqref{eq:liftisometry} on the whole collar.
\end{proof}

\begin{corollary}[Compatible lifts through the tower]\label{cor:towerfrob}
Let $B\ge\max\{1,m_1,\ldots,m_\nu\}$ and choose $S<1$ with $\varepsilon_*=|\pi|S^{-B}<1$. Put $s_i=S^{1/p^{\nu-i}}$. Starting from $\sigma_\nu(w_\nu)=w_\nu^p$, the lifts of \cref{lem:onestepfrob} give compatible analytic maps on $A_{w_i}[s_i^{1/p},1)$. If
\[
 D_\nu=0,\qquad
 D_i=pD_{i+1}+p(p-1)m_{i+1},
\]
then
\begin{equation}\label{eq:liftcloseness}
 D_i\le pB(p^{\nu-i}-1),\qquad
 \left|\frac{\sigma_i(w_i)}{w_i^p}-1\right|_\varrho
       \le |\pi|\varrho^{-D_i}\le\varepsilon_*.
\end{equation}
For every $a\ge1$, the iterates $\phi_i=\sigma_i^a$, $q=p^a$, satisfy
\[
 \phi_i(w_i)=w_i^q(1+Z_i),\qquad
 |Z_i|_\varrho\le\varepsilon_*
 \quad\text{for }\varrho\ge s_i^{1/q}.
\]
The same radius $S$ works for every $a$.
\end{corollary}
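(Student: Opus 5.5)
The plan is a downward induction on $i$ from $\nu$ to $0$, applying \cref{lem:onestepfrob} at each degree-$p$ step with lower coordinate $x=w_{i+1}$, upper coordinate $w=w_i$ and $\mu=m_{i+1}$.

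\emph{Base case.} At $i=\nu$ the lift is standard, $\sigma_\nu(w_\nu)=w_\nu^p$. Hence $D_\nu=0$, and the isometry $|\sigma_\nu(f)|_\rho=|f|_{\rho^p}$ is the Gauss-norm identity for the substitution $w\mapsto w^p$ combined with the isometric coefficient lift.

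\emph{Inductive step.} Assume the lift on $A_{w_{i+1}}$ satisfies the closeness bound with exponent $D_{i+1}$ on $A_{w_{i+1}}[s_{i+1}^{1/p},1)$. The lower model radius in \cref{lem:onestepfrob} is $s=s_{i+1}$. The lemma then requires $|\pi|s_{i+1}^{-(pm_{i+1}+D_{i+1})/p}<1$.

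To check this, first bound $D_i$ from the recursion. By induction, $D_i\le p\cdot pB(p^{\nu-i-1}-1)+p(p-1)B$, which simplifies to $pB(p^{\nu-i}-1)$. Then the exponent satisfies
\[
 (pm_{i+1}+D_{i+1})/p\le B+B(p^{\nu-i-1}-1)=Bp^{\nu-i-1}.
\]
Since $s_{i+1}=S^{1/p^{\nu-i-1}}$, we get $s_{i+1}^{-Bp^{\nu-i-1}}=S^{-B}$, so the hypothesis reduces exactly to $\varepsilon_*<1$.

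The lemma produces $\sigma_i$ on $A_{w_i}[s_{i+1}^{1/p^2},1)$, and $s_{i+1}^{1/p^2}=s_i^{1/p}$ as required. It also gives closeness exponent $pD_{i+1}+p(p-1)m_{i+1}=D_i$. For $\varrho\ge s_i^{1/p}$ we have $\varrho\ge s_i=S^{1/p^{\nu-i}}$. So $\varrho^{-D_i}\le S^{-D_i/p^{\nu-i}}\le S^{-pB(1-p^{-(\nu-i)})}$.

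This bound is $S^{-B}$ only after checking $pB(p^{\nu-i}-1)/p^{\nu-i}\le B$, which is false for $p\ge2$. So the estimate must use the sharper radius $s_i^{1/p}$. Then $\varrho^{-D_i}\le S^{-D_i/p^{\nu-i+1}}\le S^{-B(1-p^{-(\nu-i)})}\le S^{-B}$, giving $|\pi|\varrho^{-D_i}\le\varepsilon_*$. Compatibility with the coordinate maps is part of \cref{lem:onestepfrob}.

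\emph{Iterates.} For $\phi_i=\sigma_i^a$, induct on $a$. Write $\sigma_i(w)=w^p(1+E)$ with $|E|_\varrho\le\varepsilon_*$ for $\varrho\ge s_i^{1/p}$. Then
\[
 \sigma_i^{a}(w)=\sigma_i\bigl(w^{p^{a-1}}(1+Z^{(a-1)})\bigr)=w^{p^a}(1+E)^{p^{a-1}}\,\sigma_i(1+Z^{(a-1)}).
\]
By the isometry, $|\sigma_i(Z^{(a-1)})|_\varrho=|Z^{(a-1)}|_{\varrho^p}$. If $\varrho\ge s_i^{1/p^a}$ then $\varrho^p\ge s_i^{1/p^{a-1}}$, so the induction hypothesis bounds this by $\varepsilon_*$. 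Integer powers of $1+\epsilon$ with $|\epsilon|<1$ preserve the perturbation bound, and products of such units do too by the ultrametric inequality. Hence $|Z_i|_\varrho\le\varepsilon_*$.

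The domain of $\sigma_i^a$ must also be checked. The iterate first maps $A[s_i^{1/q},1)$ into $A[s_i^{1/p^{a-1}},1)$, because $\sigma_i$ scales radii by $p$th powers; this is the isometry applied to $w$, using $|Z|<1$. The induction continues from there.

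The main obstacle is the radius bookkeeping in the inductive step: checking that the accumulated exponent $D_i$ is absorbed by the root $s_i^{1/p}$ of $S$, so that a single $\varepsilon_*$ works uniformly in $i$ and $a$.
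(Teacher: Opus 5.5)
Your proposal is correct and follows the paper's proof. You run a downward induction through \cref{lem:onestepfrob}, where the residual hypothesis reduces to $\varepsilon_*=|\pi|S^{-B}<1$ via $pm_{i+1}+D_{i+1}\le Bp^{\nu-i}$, and you bound the iterates using the isometry; your induction on $a$ is the paper's product formula $\prod_{j=0}^{a-1}(1+\sigma_i^j(\epsilon_i))^{p^{a-1-j}}$ unrolled. You can delete the abandoned estimate that uses only $\varrho\ge s_i$, since the bound you then give on the actual domain $\varrho\ge s_i^{1/p}$ is the one the statement needs.
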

\begin{proof}
The stated bound for $D_i$ follows by downward induction. At the step from $i+1$ to $i$,
\[
 pm_{i+1}+D_{i+1}\le Bp^{\nu-i},\qquad
 s_{i+1}^{1/p}=S^{1/p^{\nu-i}},
\]
so the Hensel residual in \cref{lem:onestepfrob} is at most $\varepsilon_*<1$. The same bound follows for the relative perturbation on $A_{w_i}[s_i^{1/p},1)$. Thus that lemma applies successively and gives compatible isometric lifts.

Write $\epsilon_i=\sigma_i(w_i)/w_i^p-1$. Direct iteration gives
\[
 \frac{\sigma_i^a(w_i)}{w_i^{p^a}}
   =\prod_{j=0}^{a-1}
       (1+\sigma_i^j(\epsilon_i))^{p^{a-1-j}}.
\]
For $\varrho\ge s_i^{1/p^a}$, the isometry gives
$|\sigma_i^j(\epsilon_i)|_\varrho
=|\epsilon_i|_{\varrho^{p^j}}\le\varepsilon_*$.
Integer powers and finite products preserve this perturbation bound, proving the assertion for every iterate. When $\nu=0$, the tower is empty and the lift is standard.
\end{proof}

\begin{lemma}[Rank-one change of lift]\label{lem:rankonechangelift}
Let $B\ge1$ and let a purely wild rank-one normal form on $A_w[s,1)$ have connection coefficient
\[
 \omega=\sum_{1\le k\le B}c_kw^{-k},\qquad
 |c_k|\le\kappa=|\pi|^{1/p^{j_*(B)}}.
\]
Consider two analytic lifts on $A_w[s^{1/q},1)$ with the same isometric coefficient action $\tau_K$,
\[
 \phi_{\mathrm{std}}(w)=w^q,\qquad
 \phi(w)=w^q(1+Z),\qquad |Z|_\varrho\le\varepsilon<1.
\]
Put $\mathscr H(x)=\sup_{n\ge1}x^n/|n|$ for $0\le x<1$. If
\[
 \kappa s^{-B}\mathscr H(\varepsilon)<|\pi|,
\]
there is a horizontal isomorphism $\phi^*\LL\to\phi_{\mathrm{std}}^*\LL$ whose factor and inverse are analytic of norm one on this collar.
\end{lemma}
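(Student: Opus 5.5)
The plan is to write down the horizontal isomorphism explicitly as multiplication by an exponential. The only estimate needed is the convergence of that exponential on the collar.

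Let $e$ be the normal-form basis of $\LL$, so $\nabla_{\del_w}e=\omega e$. For an analytic lift $\psi$ with coefficient action $\tau_K$, the pullback $\psi^*\LL$ has basis $\psi^*e$ with connection coefficient
\[
 \omega_\psi=\omega^{\tau}(\psi(w))\,\del_w\log\psi(w),
\]
where $\omega^\tau$ applies $\tau_K$ to the coefficients $c_k$. A map $\psi^*e\mapsto h\,\phi_{\mathrm{std}}^*e$ with $h$ an analytic unit is horizontal exactly when $\del_w h/h=\omega_\phi-\omega_{\mathrm{std}}$.

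To integrate this, I introduce the primitive
\[
 G(X)=-\sum_{1\le k\le B}\frac{c_k^{\tau}}{k}X^{-k},
 \qquad X\frac{dG}{dX}=\omega^\tau(X).
\]
By the chain rule, $\del_w\bigl(G(\psi(w))\bigr)=\omega_\psi$ for either lift. The formal candidate is therefore
\[
 h=\exp\bigl(G(\phi(w))-G(w^q)\bigr)
  =\exp\Bigl(-\sum_k c_k^{\tau}w^{-qk}\,\frac{(1+Z)^{-k}-1}{k}\Bigr).
\]
The individual terms $c_k/k$ need not be small, since $|k|$ can be tiny. So the difference of the two terms has to be grouped before estimating.

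The key computation is the binomial expansion
\[
 \frac{(1+Z)^{-k}-1}{k}=\sum_{n\ge1}(-1)^n\frac1n\binom{k+n-1}{n-1}Z^n .
\]
The binomial coefficients are integers, so for $\varrho\ge s^{1/q}$ this expression has $\varrho$-norm at most $\sup_n\varepsilon^n/|n|=\mathscr H(\varepsilon)$. The series converges uniformly on closed subannuli because $\varepsilon<1$, so the expression is analytic there.

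Since $\tau_K$ is isometric, $|c_k^\tau|\le\kappa$. Also $|w^{-qk}|_\varrho=\varrho^{-qk}\le s^{-k}\le s^{-B}$, using $\varrho^q\ge s$, $k\le B$ and $s<1$. Hence the argument $\Lambda$ of the exponential satisfies
\[
 |\Lambda|_\varrho\le\kappa s^{-B}\mathscr H(\varepsilon)<|\pi|=p^{-1/(p-1)}.
\]
This is inside the radius of convergence of $\exp$. It follows that $h=\exp(\Lambda)$ satisfies $|h-1|_\varrho<1$, is analytic, and has norm one. Its inverse $\exp(-\Lambda)$ obeys the same bounds.

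Horizontality then follows from $\del_w h=h\,\del_w\Lambda$. This holds because $\exp$ converges in the Banach algebra and termwise differentiation is valid on each closed subannulus.

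I expect the main obstacle to be the $1/k$ issue together with the convergence of the exponential. Grouping the two pullback terms into $((1+Z)^{-k}-1)/k$ is what produces exactly the $\mathscr H(\varepsilon)$ factor in the hypothesis. Beyond that, I would only check that the estimates are uniform over closed subannuli of the collar, so that $h$ and its inverse glue to analytic functions on the whole collar $A_w[s^{1/q},1)$.
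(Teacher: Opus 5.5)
Your proposal is correct and takes essentially the same route as the paper's proof. Both use the Laurent primitive of $\omega^{\tau}$ and group the difference as $\sum_k c_k^{\tau}w^{-qk}\bigl((1+Z)^{-k}-1\bigr)/k$; the integral binomial expansion then produces the factor $\mathscr H(\varepsilon)$, and $|\Lambda|_\varrho<|\pi|$ gives convergence of $\exp(\pm\Lambda)$ with norm one, while horizontality is the same chain-rule computation $\del_w h/h=\omega_\phi-\omega_{\mathrm{std}}$ (your sign convention for $f_k$ differs only trivially from the paper's).
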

\begin{proof}
Use the finite Laurent primitive
\[
 \Omega(z)=\sum_{1\le k\le B}\frac{c_k}{-k}z^{-k},
 \qquad \del_z\Omega=\omega.
\]
Its difference at the two lifts is
\[
 I=\Omega^{\tau_K}(\phi(w))-\Omega^{\tau_K}(w^q)
   =\sum_{1\le k\le B}c_k^{\tau_K}w^{-qk}f_k(Z),
 \qquad f_k(Z)=\frac{(1+Z)^{-k}-1}{-k}.
\]
The binomial identity
\[
 f_k(Z)=\sum_{n\ge1}\binom{-k-1}{n-1}\frac{Z^n}{n}
\]
has integral binomial coefficients. Consequently
\[
 |I|_\varrho\le|\omega|_{\varrho^q}\mathscr H(|Z|_\varrho)
       \le\kappa s^{-B}\mathscr H(\varepsilon)<|\pi|.
\]
Thus $\exp(I)$ and $\exp(-I)$ converge analytically and have norm one. The chain rule gives
\[
 \del_w I=
 \frac{\del_w\phi(w)}{\phi(w)}\omega^{\tau_K}(\phi(w))
       -q\omega^{\tau_K}(w^q).
\]
With our convention $\nabla_{\del_w}e=\omega e$, this says precisely that
$\phi^*e\mapsto\exp(I)\phi_{\mathrm{std}}^*e$ is horizontal.
\end{proof}

The function $\mathscr H$ has a simple explicit form in the range needed here:
\begin{equation}\label{eq:Hrange}
 \mathscr H(x)=\sup_{j\ge0}p^j x^{p^j}=p x^p
 \qquad\bigl(|\pi|\le x\le|\pi|^{1/p}\bigr).
\end{equation}
For the first equality write $n=p^j u$ with $p\nmid u$; among such $n$, the maximum occurs at $u=1$. For the second, compare adjacent terms: the term $j=1$ dominates $j=0$ when $x\ge|\pi|$, and dominates all subsequent terms when $x\le|\pi|^{1/p}$.

\begin{proposition}[Uniform Frobenius collar for wild pushforward]\label{prop:wildfrob}
Fix $c\in[|\pi|^p,1)$. Let $\LL$ be a purely wild rank-one normal form with exponent zero at the top of the tower in \cref{subsec:wildsetup}, and put
\[
 B=\max\{1,b(\LL),m_1,\ldots,m_\nu\}.
\]
Suppose a chosen Frobenius structure for $w_0\mapsto w_0^q$, $q=p^a$, is analytic with analytic inverse on the Frobenius subannulus of a model with inner radius $s_{\mathrm{in}}$. There is a bottom radius $S<1$, depending only on $B,p,c,s_{\mathrm{in}}$, such that this structure pushes forward through the tower relative to the standard bottom lift $w_\nu\mapsto w_\nu^q$, with analytic inverse on $A[S^{1/q},1)$. The radius $S$ may be chosen at least as large as all connection-model radii in \cref{prop:wild}, independently of $a$. The construction is functorial in the chosen initial structure.
\end{proposition}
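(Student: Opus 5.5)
The plan combines three earlier results: the compatible lifts of \cref{cor:towerfrob}, the change-of-lift isomorphism of \cref{lem:rankonechangelift}, and the finite \'etale base change of \cref{lem:frobpush}, applied one degree-$p$ step at a time.

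\textbf{Choosing the radius.} I first fix $S$. Set $\kappa=|\pi|^{1/p^{j_*(B)}}$. I want $S$ close enough to $1$ that four conditions hold.
\begin{enumerate}
\item The relative perturbation satisfies $\varepsilon_*=|\pi|S^{-B}\le|\pi|^{1/p}$, so \eqref{eq:Hrange} gives $\mathscr H(\varepsilon_*)=p\varepsilon_*^p$.
\item The inequality
\[
 \kappa\, s_0^{-B}\, p\,\varepsilon_*^{p}<|\pi|
\]
holds, where $s_0=S^{1/p^\nu}$ is the top radius.
\item $S\ge c^{1/(B_\nu+1)}$, so that $S$ dominates all connection-model radii of \cref{prop:wild}. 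Since $B_\nu\le B$ by \cref{lem:inductionbreak}, it suffices to take $S\ge c^{1/(B+1)}$.
\item The top radius satisfies $s_0\ge s_{\mathrm{in}}$.
\end{enumerate}
Condition (2) is satisfiable for $S$ near $1$: we have $\kappa<1$ and $p\varepsilon_*^p\to p|\pi|^p=|\pi|$ as $S\to1$, so the left side tends to $\kappa|\pi|<|\pi|$. All four conditions involve only $B,p,c,s_{\mathrm{in}}$, and none involves $a$. This matches the statement that $S$ is independent of $a$.

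\textbf{Transporting the structure to the top.} \Cref{cor:towerfrob} gives compatible lifts $\phi_i=\sigma_i^a$ on $A_{w_i}[s_i^{1/q},1)$ with $\phi_\nu$ standard. At the top it gives $\phi_0(w_0)=w_0^q(1+Z_0)$ with $|Z_0|\le\varepsilon_*$, uniformly in $a$. The prescribed structure $\Phi_{\mathrm{std}}:\phi_{\mathrm{std}}^*\LL\to\LL$ is relative to the standard top lift. I compose it with the isomorphism $\phi_0^*\LL\to\phi_{\mathrm{std}}^*\LL$ of \cref{lem:rankonechangelift}, whose hypothesis is condition (2). This gives a Frobenius structure relative to $\phi_0$ that is analytic with analytic inverse on $A_{w_0}[s_0^{1/q},1)$. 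Functoriality holds because the correction factor $\exp(I)$ depends only on $\omega$ and the lifts, not on the chosen structure.

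\textbf{Pushing down the tower.} I then push down step by step with \cref{lem:frobpush}. This requires the square of $g_i$ with $\phi_i,\phi_{i+1}$ to be cartesian on the whole collar. The lifts are compatible by construction, and the squares are cartesian over the Robba ring by \cref{rem:compatibility}, since relative Frobenius of an \'etale map is an isomorphism. The comparison map is analytic on the collar because the coordinates are the analytic ones of \cref{thm:collar}. The algebras are finite free with basis $1,y,\dots,y^{p-1}$. So \cref{cor:cartesiancollar} upgrades the square to cartesian on $A_{w_i}[s_i^{1/q},1)$. Iterating gives a Frobenius structure on $g_*\LL$, with analytic inverse on $A[S^{1/q},1)$, relative to the standard bottom lift.

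\textbf{Main obstacle.} The delicate step is the change of lift. The perturbation $Z_0$ at the top is bounded only by $|\pi|S^{-B}$, and this bound is nearly sharp. So \cref{lem:rankonechangelift} just barely applies, and it applies only because of the extra factor $\kappa<1$ from \eqref{eq:smallrankone}. I would verify this margin carefully, including the edge case $j_*(B)=0$, where $\kappa=|\pi|$. A second, more routine point is checking the analytic comparison map needed for \cref{cor:cartesiancollar}: one shows that the pullback of $g_i$ along $\phi_{i+1}$ is generated by $\phi_i(y_i)=z$, which solves the Hensel equation.
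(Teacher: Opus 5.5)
Your proposal is correct and follows the paper's proof step for step. You choose $S$, apply \cref{cor:towerfrob}, transport the structure to $\phi_0$ with \cref{lem:rankonechangelift}, upgrade the squares to cartesian via \cref{rem:compatibility} and \cref{cor:cartesiancollar}, and push down with \cref{lem:frobpush}.

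The only difference is how $S$ is fixed. The paper uses the single inequality $\max\{s_{\mathrm{in}},c^{1/(B+1)},\kappa^{1/((p+1)B)}\}<S$, which gives your conditions (1) and (2) together, since $\kappa S^{-B}\mathscr H(\varepsilon_*)=\kappa|\pi|S^{-(p+1)B}$. You should state (2) and (4) with $S$ in place of $s_0=S^{1/p^\nu}$; this suffices because $s_0\ge S$. As written, your $S$ depends on $\nu$, contrary to your claim that the conditions involve only $B,p,c,s_{\mathrm{in}}$.
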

\begin{proof}
Put $\kappa=|\pi|^{1/p^{j_*(B)}}$ and choose
\begin{equation}\label{eq:taylorcollar}
 \max\left\{s_{\mathrm{in}},c^{1/(B+1)},
             \kappa^{1/((p+1)B)}\right\}<S<1.
\end{equation}
The recurrence in \cref{prop:wild} gives $B_i\le B$ at every stage. Thus $S$ is at least all the connection-model radii $c^{1/(B_i+1)}$.

Set $s_i=S^{1/p^{\nu-i}}$ and $\varepsilon_*=|\pi|S^{-B}$. Since $S^{(p+1)B}>\kappa\ge|\pi|$,
\[
 |\pi|<\varepsilon_*<|\pi|\kappa^{-1/(p+1)}
       \le|\pi|^{p/(p+1)}<|\pi|^{1/p}.
\]
\Cref{eq:Hrange} therefore gives
\[
 \kappa S^{-B}\mathscr H(\varepsilon_*)
       =\kappa|\pi|S^{-(p+1)B}<|\pi|.
\]
Apply \cref{cor:towerfrob} and then \cref{lem:rankonechangelift} at the top, with $s=s_0\ge S$. The latter transports the chosen standard-lift Frobenius to the compatible lift $\phi_0$, with analytic inverse.

The compatible lifts give analytic comparison maps for the squares at every wild step. These squares are cartesian over the Robba ring by \cref{rem:compatibility}. Their pullback and upper section algebras are finite free \'etale of the same rank, so \cref{cor:cartesiancollar} makes the squares cartesian on the whole collar. Apply \cref{lem:frobpush} successively. The bottom lift is standard, and restriction to these collars preserves the connection estimates. Both change of lift and finite \'etale pushforward are functorial.
\end{proof}

For the constituent construction there are exactly $\nu=v_p(d)$ wild steps, and \cref{prop:geometric} bounds $B$ by $\max\{1,B_*(d,b,p)\}$. After fixing $c=c(p)$ in the final assembly, the analytic collar and the pushforward constants therefore have the required dependence on the constituent rank and ramification data.

\section{Dwork's trick and differential direct summands}\label{sec:dwork}
\subsection{Extension of horizontal endomorphisms}
Dwork's argument extends horizontal projectors to the ambient model annulus. We give the annular continuation argument explicitly to track the inner radius; for the classical disc form of Dwork's trick, see \cite[Section~17.2]{Ked22}. The same argument will then extend the prescribed Frobenius on their images.
Let $E$ be a finite free differential module of rank $r_E$ over $\mathscr A_s$, and put $E_{\mathcal{R}}=E\otimes_{\mathscr A_s}\mathcal{R}_{K,t}$. For \cref{lem:endext}, assume $E$ has a horizontal Frobenius isomorphism for $\phi(t)=t^q$, where $q>1$ and the coefficient lift is isometric. In a fixed basis write its matrix as $\FM $, with
\[
 \FM ,\FM ^{-1}\in M_{r_E}(\mathscr A_{s^{1/q}}).
\]

\begin{lemma}\label{lem:unique}
If an $\mathscr A_s$-direct summand $\mathcal{M}\subseteq E$ has Robba-ring extension $M\subseteq E_{\mathcal{R}}$, then $\mathcal{M}=E\cap M$ inside $E_{\mathcal{R}}$.
\end{lemma}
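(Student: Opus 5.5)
The plan is to prove the two inclusions separately, using the complement that the direct-summand hypothesis provides. The inclusion $\mathcal{M}\subseteq E\cap M$ is immediate: $\mathcal{M}\subseteq E$, and the image of $\mathcal{M}$ in $E_{\mathcal{R}}$ lies in $M=\mathcal{M}\otimes_{\mathscr A_s}\mathcal{R}_{K,t}$. Here we first record that $E\to E_{\mathcal{R}}$ is injective. The reason is that $E$ is finite free and $\mathscr A_s\to\mathcal{R}_{K,t}$ is injective: a function on $A[s,1)$ that vanishes on an outer collar vanishes identically, by the Laurent expansion.

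For the reverse inclusion, write $E=\mathcal{M}\oplus\mathcal{M}'$ as $\mathscr A_s$-modules and let $e\colon E\to E$ be the corresponding idempotent. Its base change $e_{\mathcal{R}}$ is an idempotent of $E_{\mathcal{R}}$ with image $M$ and kernel $M'=\mathcal{M}'\otimes\mathcal{R}_{K,t}$, and $E_{\mathcal{R}}=M\oplus M'$. Now take $x\in E\cap M$ and decompose $x=x_1+x_2$ with $x_1\in\mathcal{M}$ and $x_2\in\mathcal{M}'$. In $E_{\mathcal{R}}$ the element $x$ lies in $M$, and $x_1$ also lies in $M$. Hence $x_2=x-x_1\in M\cap M'=0$. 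Since $\mathcal{M}'\to M'$ is injective, we get $x_2=0$ in $\mathcal{M}'$, so $x=x_1\in\mathcal{M}$.

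The key point is this injectivity of $\mathcal{M}'\to\mathcal{M}'\otimes\mathcal{R}_{K,t}$. It holds because $\mathcal{M}'$ is a direct summand of the free module $E$, so it is projective, hence flat. Tensoring the injection $\mathscr A_s\hookrightarrow\mathcal{R}_{K,t}$ with $\mathcal{M}'$ therefore stays injective. Equivalently, $\mathcal{M}'\otimes\mathcal{R}_{K,t}$ is a direct summand of $E_{\mathcal{R}}$, and $E\hookrightarrow E_{\mathcal{R}}$ restricts to $\mathcal{M}'$.

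The only mildly delicate step is the injectivity of $\mathscr A_s\to\mathcal{R}_{K,t}$. We will justify it through the identity principle for Laurent series convergent on $A[s,1)$: the coefficients are determined by the function on any outer circle. With that in hand, the remaining steps are formal, and the horizontality of the summand plays no role in the argument.
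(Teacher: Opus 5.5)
Your proof is correct and follows the paper's argument: choose a complement, decompose $x$, note that the complementary component lies in $M\cap\mathcal{M}'_{\mathcal{R}}=0$, and conclude that it vanishes. The injectivity checks you add, via $\mathcal{R}_{K,t}=\bigcup_s\mathscr A_s$ and the direct-summand (projective) structure of $\mathcal{M}'$, are exactly what the paper leaves implicit.
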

\begin{proof}
Choose $E=\mathcal{M}\oplus\mathcal{N}$. For $e\in E\cap M$, write $e=m+n$. After base change, $n$ lies both in $M$ and in $\mathcal{N}_{\mathcal{R}}$, so $n=0$. The reverse inclusion is immediate.
\end{proof}

\begin{lemma}[Dwork extension]\label{lem:endext}
Every horizontal endomorphism of $E_{\mathcal{R}}$ has matrix entries in $\mathscr A_s$.
\end{lemma}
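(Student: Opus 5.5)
Let $X$ be a horizontal endomorphism of $E_{\mathcal{R}}$. I plan to write $X$ as a matrix in the fixed basis of $E$. A priori its entries lie in $\mathscr A_{s'}$ for some $s'\in[s,1)$, since $X$ is a finite matrix over $\mathcal{R}_{K,t}$. The strategy is the usual Dwork continuation: use the Frobenius to pull the domain of convergence inward, one Frobenius step at a time, until it reaches the radius $s$ of the model.

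\textbf{Step 1: relating $X$ to its Frobenius pullback.} The pullback $\phi^*X$ is an endomorphism of $\phi^*E_{\mathcal{R}}$, and transporting it through the Frobenius isomorphism gives
\[
 \Phi\circ\phi^*X\circ\Phi^{-1},
\]
which is again a horizontal endomorphism of $E_{\mathcal{R}}$. The key point is that these two endomorphisms agree:
\[
 X=\FM\,\phi(X)\,\FM^{-1}.
\]
This is not automatic. I would derive it by viewing $X$ as a horizontal section of $\End E$. That bundle carries the induced Frobenius, and horizontal sections over the Robba ring form a finite-dimensional $K$-space $H$. The map $X\mapsto\FM\phi(X)\FM^{-1}$ is a $\sigma_K$-semilinear bijection of $H$. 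This only shows that it permutes $H$ semilinearly, so it need not fix $X$ itself. To avoid needing the fixed-point statement, I would instead apply the continuation argument below to every element of $H$ simultaneously.

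\textbf{Step 2: continuation of the whole space $H$.} Set
\[
 s_0=\inf\{s''\ge s: \text{all of } H \text{ has entries in } \mathscr A_{s''}\}.
\]
This is well defined because $H$ is finite-dimensional. Take $Y\in H$ and write $Y=\FM\,\phi(Y')\,\FM^{-1}$ with $Y'\in H$; this uses that the semilinear map of Step 1 is surjective. Here $\phi(Y')$ is analytic wherever $|t|^q$ lies in the domain of $Y'$. Together with $\FM,\FM^{-1}\in M_{r_E}(\mathscr A_{s^{1/q}})$, this shows that $Y$ is analytic on
\[
 A[\max\{s^{1/q},\,s_0^{1/q}\},1).
\]
This bound alone only reproduces the outer region. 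What is actually needed is to \emph{push inward}, which means inverting the relation:
\[
 \phi(Y')=\FM^{-1}Y\FM .
\]
Knowing $Y$ on $[s_0,1)$, this determines $\phi(Y')$ on $[\max\{s_0,s^{1/q}\},1)$, and hence $Y'$ on $[\max\{s_0^q,s\},1)$. So each element of $H$ extends from radius $s_0$ down to $\max\{s_0^q,s\}$. The extension is well defined: on overlaps it agrees with the element itself, since analytic continuation on annuli is unique by the identity theorem for Laurent series. Minimality of $s_0$ then forces $s_0^q\le s$ unless $s_0=s$, and iterating gives $s_0=s$.

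\textbf{Main obstacle.} The delicate point is checking that the inverted relation really produces an analytic matrix on the larger annulus. This requires showing that $\phi(Y')$ being analytic on $A[\rho,1)$ implies $Y'$ is analytic on $A[\rho^q,1)$. Because the coefficient lift is isometric, the Laurent coefficients satisfy
\[
 |\sigma_K(a_n)|\rho^{qn}=|a_n|(\rho^q)^n .
\]
The Gauss norms therefore transfer exactly, so convergence on the circle of radius $\rho$ for $\phi(Y')$ is equivalent to convergence on the circle of radius $\rho^q$ for $Y'$. The last point to check is that the entries at radius exactly $s$ are fine, so that they lie in $\mathscr A_s$ rather than only on the open annulus. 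This follows because the relation at the final step uses $\FM^{\pm1}$ at radius $s^{1/q}$, which lies inside its domain of analyticity.
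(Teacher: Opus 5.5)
Your argument is correct and is essentially the paper's proof. Both work with the whole finite-dimensional space $H^0_\nabla(\End E_{\mathcal{R}})$, which is stable under $Q\mapsto\FM\phi(Q)\FM^{-1}$, invert to $\phi(Q)=\FM^{-1}\mathscr F(Q)\FM$, and use the isometric transfer of convergence from radius $\rho$ to $\rho^q$ to push the radius inward until it reaches $s$; the paper iterates from an initial $\rho_0$ where you take an infimum, and surjectivity of the semilinear map is not needed, only that it maps $H$ into itself.
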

\begin{proof}
The space $V=H^0_\nabla(\End(E_{\mathcal{R}}))$ is a finite-dimensional $K$-vector space, of dimension at most $r_E^2$. Frobenius induces the semilinear map
\[
 \mathscr F(Q)=\FM \phi(Q)\FM ^{-1}
\]
on $V$. Put $d=\dim_K V$, choose a $K$-basis $Q_1,\ldots,Q_{d}$, and write $\mathscr F(Q_j)=\sum_i a_{ij}Q_i$ with $a_{ij}\in K$. Thus
\begin{equation}\label{eq:bootstrap}
 \phi(Q_j)=\FM ^{-1}\left(\sum_i a_{ij}Q_i\right)\FM .
\end{equation}
Initially all $Q_i$ are analytic on some common $A[\rho_0,1)$ with $s\le\rho_0<1$. The right side of \eqref{eq:bootstrap} is analytic on $A[\max\{\rho_0,s^{1/q}\},1)$. For a Laurent series $Q_j=\sum_n Q_{j,n}t^n$, isometry of the coefficient lift implies that convergence of $\phi(Q_j)$ at a radius $\rho$ is equivalent to convergence of $Q_j$ at $\rho^q$. Therefore all $Q_j$ are analytic on
\[
 A[\max\{\rho_0^q,s\},1).
\]
Repeating gives analyticity on $A[\max\{\rho_0^{q^j},s\},1)$. Since $\rho_0<1$ and $s>0$, finitely many iterations reach $s$.
\end{proof}

\begin{corollary}[Horizontal maps and coefficient extension]\label{cor:homextension}
If finite free differential models $E_1,E_2$ on $A_{K,t}[s,1)$ carry auxiliary Frobenius structures for the same lift and level, every horizontal Robba-ring map $E_{1,\mathcal{R}}\to E_{2,\mathcal{R}}$ extends to the whole model annulus. The same conclusion holds after any finite coefficient extension, even when the auxiliary coefficient Frobenius does not extend to that field.
\end{corollary}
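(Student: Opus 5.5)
The plan is to reduce to \cref{lem:endext} by assembling the two models into a single model with Frobenius. Put $E=\mathcal{H}om(E_1,E_2)=E_1^\vee\otimes E_2$ on $A_{K,t}[s,1)$. This is finite free with the induced connection. The auxiliary Frobenius structures $\Phi_1,\Phi_2$ for the common lift $\phi(t)=t^q$ combine to a horizontal Frobenius on $E$ with matrix built from $\FM_2$ and $\FM_1^{-1}$. Both are analytic with analytic inverses on $A[s^{1/q},1)$, since $E_1,E_2$ are models with Frobenius defined on the Frobenius subannulus. A horizontal Robba-ring map $E_{1,\mathcal{R}}\to E_{2,\mathcal{R}}$ is a horizontal section of $E_{\mathcal{R}}$. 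I would then apply the same bootstrap as in \cref{lem:endext}, with $\End$ replaced by $\mathcal{H}om$. The space $H^0_\nabla(E_{\mathcal{R}})$ is finite-dimensional over $K$ and stable under $Q\mapsto \FM_2\phi(Q)\FM_1^{-1}$. The relation $\phi(Q_j)=\FM_2^{-1}(\sum_i a_{ij}Q_i)\FM_1$ pushes the radius of analyticity from $\rho_0$ to $\max\{\rho_0^q,s\}$, and after finitely many steps it reaches $s$. In fact the proof of \cref{lem:endext} uses nothing specific to endomorphisms, so I would simply state that it applies verbatim, for instance by viewing the map as an off-diagonal horizontal endomorphism of $E_1\oplus E_2$ with the diagonal Frobenius.

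For the coefficient extension $K'/K$, the lift $\sigma_K$ need not extend to $K'$, so the Frobenius argument cannot be run over $K'$. Instead, choose a $K$-basis $\lambda_1,\dots,\lambda_n$ of $K'$. Then $\mathcal{R}_{K',t}=\mathcal{R}_{K,t}\otimes_KK'$, and likewise for $\mathscr A_s$, since $K'/K$ is finite and these are Laurent series with coefficient conditions that are compatible with a finite-dimensional normed $K$-space. The analytic algebra $\Gamma(A_{K',t}[s,1),\OO)$ equals $\mathscr A_s\otimes_KK'$ because all norms on $K'$ over $K$ are equivalent. A horizontal map over $K'$ then decomposes as $\sum_k\lambda_k Q^{(k)}$. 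The connection is $K'$-linear and defined over $K$, so each $Q^{(k)}$ is a horizontal Robba-ring map over $K$. By the first part each $Q^{(k)}$ has entries in $\mathscr A_s$, and hence the original map has entries in $\mathscr A_s\otimes_KK'$, which is analytic on the $K'$-annulus.

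The main obstacle is the identification $\mathcal{R}_{K',t}=\mathcal{R}_{K,t}\otimes_KK'$ at the level of analytic functions on each annulus, including irrational radii. I would handle it by choosing the basis $\lambda_k$ so that the norm on $K'$ is equivalent to the max-norm in coordinates with constants depending on $K'/K$. Only membership in $\mathscr A_s$ is claimed, not any bound, so these constants are harmless. The coefficientwise decomposition then preserves the convergence conditions at every radius.
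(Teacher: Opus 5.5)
Your proposal is correct and follows the paper's proof. The paper likewise applies \cref{lem:endext} to $E_1\oplus E_2$ and takes the off-diagonal block. For a finite extension it expands a horizontal map in a $K$-basis of the extension, and each coefficient matrix is horizontal over $K$ because the connection matrices are defined over $K$. Your identification $\Gamma(A_{K',t}[s,1),\OO)=\mathscr A_s\otimes_KK'$, via equivalence of norms on $K'$, makes explicit a step the paper leaves implicit.
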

\begin{proof}
Apply \cref{lem:endext} to $E_1\oplus E_2$ and take the appropriate off-diagonal block. For a finite extension $L/K$, expand the entries of a horizontal map in a $K$-basis of $L$. Since the connection matrices have coefficients in $K$, every coefficient matrix is horizontal over $K$ and hence extends. The finite expansion then extends over $L$.
\end{proof}

\begin{proposition}[Extension of differential direct summands]\label{prop:extend}
Let $E$ be a finite free differential module over $\mathscr A_s$ such that every horizontal endomorphism of $E_{\mathcal{R}}$ extends to $E$. Then every differential direct summand $M\subseteq E_{\mathcal{R}}$ extends uniquely to an $\mathscr A_s$-differential direct summand $\mathcal{M}\subseteq E$.
\end{proposition}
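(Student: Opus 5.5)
The plan is to extend the horizontal idempotent cutting out $M$ and take its image on the model annulus. Since $M$ is a differential direct summand of $E_{\mathcal{R}}$, there is a horizontal idempotent $e\in\End(E_{\mathcal{R}})$ with $e(E_{\mathcal{R}})=M$. By hypothesis, $e$ extends to an $\mathscr A_s$-linear endomorphism $\tilde e$ of $E$; in a fixed basis of $E$, its matrix has entries in $\mathscr A_s$.

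The first step is to check that $\tilde e$ is again a horizontal idempotent. The matrices of $\tilde e^2-\tilde e$ and of $[\nabla_{\del_t},\tilde e]$, namely $A\tilde e-\tilde eA+\del_t\tilde e$, have entries in $\mathscr A_s$ and vanish in $\mathcal{R}_{K,t}$. Since $\mathscr A_s\hookrightarrow\mathcal{R}_{K,t}$ is injective (restriction of analytic functions to an outer subannulus), both vanish on $\mathscr A_s$.

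Next, set $\mathcal{M}=\tilde e(E)$ and $\mathcal{N}=(1-\tilde e)(E)$. Then $E=\mathcal{M}\oplus\mathcal{N}$ as $\mathscr A_s$-modules, both summands are finite projective, and both are stable under $\nabla$ because $\tilde e$ is horizontal. Image formation commutes with the flat base change $\mathscr A_s\to\mathcal{R}_{K,t}$ (for idempotents this needs only additivity), so $\mathcal{M}\otimes\mathcal{R}_{K,t}=e(E_{\mathcal{R}})=M$. Hence $\mathcal{M}$ is an $\mathscr A_s$-differential direct summand extending $M$.

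Uniqueness follows from \cref{lem:unique}: any $\mathscr A_s$-direct summand $\mathcal{M}'\subseteq E$ whose Robba extension is $M$ equals $E\cap M$ inside $E_{\mathcal{R}}$. This intersection depends only on $M$, so $\mathcal{M}'=\mathcal{M}$. This argument does not need the complement or the connection, so uniqueness holds even among plain module summands.

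The only real subtlety is making sure that "extends" gives genuine identities on $\mathscr A_s$ rather than only on an outer collar. This rests entirely on injectivity of $\mathscr A_s\to\mathcal{R}_{K,t}$, which I would state explicitly via Laurent expansions. The rest is formal.
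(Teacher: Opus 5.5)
Your proposal is correct and matches the paper's proof: extend the horizontal projector, use injectivity of $\mathscr A_s\to\mathcal{R}_{K,t}$ to get idempotence and horizontality on $\mathscr A_s$, take $\mathcal{M}=\operatorname{im}\tilde e$ (finite projective, with differential complement $\ker\tilde e$) as the model, and get uniqueness from \cref{lem:unique}. Your extra remarks on base change of images and on uniqueness holding among plain module summands are accurate and not needed beyond what the paper says.
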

\begin{proof}
Choose a horizontal projector ${e_M}$ onto $M$. By hypothesis its entries lie in $\mathscr A_s$, where the identities
\[
 {e_M}^2={e_M},\qquad \del_t({e_M})+A{e_M}-{e_M} A=0
\]
hold by injectivity of $\mathscr A_s\to\mathcal{R}_{K,t}$. Thus $E=\operatorname{im}{e_M}\oplus\ker{e_M}$ is a differential direct-sum decomposition. Its image $\mathcal{M}=\operatorname{im}{e_M}$ is finite projective and hence defines a model of $M$. Uniqueness follows from \cref{lem:unique}.
\end{proof}

The extension hypothesis holds when $E$ carries an auxiliary Frobenius, by \cref{lem:endext}, and also for the descended ambient models of \cref{prop:ambientdescent}. The estimates on its image come from the induced norm at each radius.

\begin{proposition}[Extension of a prescribed Frobenius]\label{prop:prescribedextension}
Suppose a finite free differential model $E$ on $A_{K,t}[s,1)$ carries an auxiliary Frobenius structure for $\sigma^{m}$, for some $m\ge1$. Let $M\subset E_{\mathcal{R}}$ be the image of a horizontal projector ${e_M} $, and let $\mathcal{M}=\operatorname{im} {e_M} \subset E$ be its extended model. Every prescribed horizontal isomorphism
\[
 \Phi:(\sigma^j)^*M\xrightarrow{\sim}M
\]
extends to an isomorphism $(\sigma^j)^*\mathcal{M}\xrightarrow{\sim}\mathcal{M}$ on $A_{K,t}[s^{1/p^j},1)$. The projector need not be equivariant for the auxiliary Frobenius.
\end{proposition}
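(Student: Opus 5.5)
The plan is to reduce everything to a statement about horizontal maps between modules that both carry auxiliary Frobenius structures, and then to invoke \cref{cor:homextension}.

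First, we pull back the ambient model. We set $E'=(\sigma^j)^*E$. This is a finite free differential model on $A_{K,t}[s^{1/p^j},1)$, because $\sigma^j$ maps $A[s^{1/p^j},1)$ into $A[s,1)$ and the coefficient lift is isometric. The auxiliary Frobenius $\FM$ for $\sigma^m$ pulls back to a Frobenius structure on $E'$ for the same lift $\sigma^m$. The two lifts commute, and $(\sigma^j)^*$ of a horizontal isomorphism is again horizontal after the connection is rescaled by the factor $p^j$ coming from $\del_t(t^{p^j})/t^{p^j}$. The restriction $E|_{A[s^{1/p^j},1)}$ also carries its Frobenius, since the auxiliary structure on a model restricts to any outer subannulus. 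So both $E'$ and $E|_{A[s^{1/p^j},1)}$ are finite free models on the common annulus $A[s^{1/p^j},1)$, with Frobenius structures for the same lift and level.

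Next, we build the composite Robba-ring map
\[
 \Psi:E'_{\mathcal R}\xrightarrow{(\sigma^j)^*e_M}(\sigma^j)^*M\xrightarrow{\ \Phi\ }M\hookrightarrow E_{\mathcal R}.
\]
This map is horizontal. We apply \cref{cor:homextension}, or directly \cref{lem:endext} to $E'\oplus E|_{A[s^{1/p^j},1)}$ with radius $s^{1/p^j}$ in place of $s$. It shows that $\Psi$ has entries analytic on $A[s^{1/p^j},1)$. We treat $\Phi^{-1}$ in the same way, through $\Psi'=(\sigma^j)^*\iota\circ\Phi^{-1}\circ e_M$ from $E$ to $E'$, which gives an analytic map in the reverse direction. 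By injectivity into the Robba ring, the identities $\Psi=e_M\Psi\,(\sigma^j)^*e_M$, $\Psi'\Psi=(\sigma^j)^*e_M$ and $\Psi\Psi'=e_M$ continue to hold on the annulus.

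It follows that $\Psi$ restricts to a horizontal map $(\sigma^j)^*\mathcal M=\operatorname{im}((\sigma^j)^*e_M)\to\operatorname{im}e_M=\mathcal M|_{A[s^{1/p^j},1)}$, with inverse given by $\Psi'$. Its base change to $\mathcal R$ is $\Phi$, so it extends the prescribed structure. Because the argument only uses Frobenius structures on the ambient models $E'$ and $E$, and never uses $e_M$ to intertwine them, the projector need not be equivariant for the auxiliary Frobenius.

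The main obstacle is the first step: checking that $(\sigma^j)^*E$ really carries a Frobenius structure for $\sigma^m$ on exactly the annulus $A[s^{1/p^j},1)$. Concretely, the pulled-back matrix $\sigma^j(\FM)$ and its inverse must be analytic on $A[s^{1/(p^jq)},1)$, where $q=p^m$. This should follow from the isometry of the coefficient lift, together with the fact that convergence at radius $\rho$ of a $\sigma^j$-substituted series is equivalent to convergence of the original series at radius $\rho^{p^j}$, which is the same observation used in \cref{lem:endext}.
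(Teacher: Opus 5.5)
Your proposal is correct and is essentially the paper's proof. Your $\Psi$ and $\Psi'$ are exactly the paper's maps $F=\iota\Phi\,(\sigma^j)^*\operatorname{pr}$ and $G=(\sigma^j)^*\iota\,\Phi^{-1}\operatorname{pr}$, extended by \cref{cor:homextension} on $A_{K,t}[s^{1/p^j},1)$ using that $E$ and $(\sigma^j)^*E$ both carry $\sigma^m$-structures, with the idempotent identities transported by injectivity into the Robba ring; your explicit check that $\sigma^j(\FM)$ and its inverse are analytic on $A_{K,t}[s^{1/(p^jq)},1)$ just spells out what the paper asserts in one line (``since powers of $\sigma$ commute'').
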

\begin{proof}
Work on $A[s^{1/p^j},1)$. Both $E$ and $(\sigma^j)^*E$ carry auxiliary $\sigma^{m}$-Frobenius structures, since powers of $\sigma$ commute. If $\iota:M\hookrightarrow E_{\mathcal{R}}$ and $\operatorname{pr}:E_{\mathcal{R}}\to M$ are the inclusion and projection, form the horizontal maps
\[
 F=\iota\Phi\,(\sigma^j)^*\operatorname{pr}:(\sigma^j)^*E_{\mathcal{R}}\to E_{\mathcal{R}},
 \qquad
 G=(\sigma^j)^*\iota\,\Phi^{-1}\operatorname{pr}:E_{\mathcal{R}}\to(\sigma^j)^*E_{\mathcal{R}}.
\]
By \cref{cor:homextension}, they extend to this annulus. The Robba-ring identities
\[
 {e_M} F=F=F(\sigma^j)^*{e_M} ,\quad (\sigma^j)^*{e_M} G=G=G{e_M} ,
 \qquad FG={e_M} ,\quad GF=(\sigma^j)^*{e_M} 
\]
hold on the annulus by injectivity of restriction to the Robba ring. Restricting $F,G$ to the two images gives the required inverse isomorphisms.
\end{proof}

\subsection{Orthonormal bases on a circle}
Write $\Gamma^*=|K^\times|^{\Q}$ for the divisible closure of the value group, and equip $R_\rho^{r_E}$ with its coordinate sup norm. The cases $\rho\notin\Gamma^*$ and $\rho\in\Gamma^*$ treated below correspond to type~3 and type~2 Gauss points, respectively \cite[Section~1.4]{Ber90}. We show that, at every radius, every submodule of $R_\rho^{r_E}$ has an orthonormal basis for its induced norm. This will transfer the ambient connection bound to a differential summand. The proof uses the standard maximal-minor and reduction arguments, which we include for completeness.

\begin{proposition}[Orthonormal bases on circles]\label{prop:orthonormal}
For every $\rho\in(0,1)$, every $R_\rho$-submodule $M\subseteq R_\rho^{r_E}$ admits an orthonormal basis for its induced norm.
\end{proposition}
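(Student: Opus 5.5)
Two separate facts are needed. First, every submodule of $R_\rho^{r_E}$ is free of rank at most $r_E$. Second, the induced norm on such a submodule admits an orthonormal basis. For the first I would use that $R_\rho$ is a principal ideal domain. When $\rho\notin\Gamma^*$ it is a field, since each nonzero element has a unique dominant monomial and is therefore a unit. When $\rho\in\Gamma^*$ it is an affinoid algebra of dimension one that is regular, hence Dedekind with trivial Picard group. In the non-rational case I would instead pass to the $K$-algebra $R_\rho$ over a finite extension of $K$ containing an element of norm $\rho^{1/n}$, and argue directly with Weierstrass preparation. Freeness then follows from the structure theorem for submodules of free modules over a principal ideal domain.

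For orthonormality I would split into the two cases of the statement. In the type-3 case, $\rho\notin\Gamma^*$, the ring $R_\rho$ is a field with value group $|K^\times|\cdot\rho^{\mathbb Z}$. I would run Gram--Schmidt by Gaussian elimination. Start from a basis of $M$ in echelon form. At each step choose the vector and coordinate where the norm is attained, and rescale by a unit so that this entry has norm one. Because the value group of $R_\rho$ is all of $|R_\rho^\times|$, and every nonzero scalar is a unit, this rescaling is available. Then clear that coordinate from the other vectors. The resulting basis has pivot entries of norm one at distinct coordinates and all entries of norm at most one. Such a basis is orthonormal: for a combination $\sum a_i x_i$, look at the pivot of the index $i$ attaining $\max|a_i|$ and check that no cancellation can lower it. This last check uses the residue-field argument described next.

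In the type-2 case, $\rho\in\Gamma^*$, I would first extend $K$ finitely so that $\rho\in|K^\times|$. Rescaling $t$ then reduces to $\rho=1$. The unit ball $R^\circ$ has reduction $\tilde k[\bar t,\bar t^{-1}]$, with $\tilde k$ the residue field of the extended $K$. Orthonormality of $x_1,\dots,x_d$ is equivalent to two conditions: each $|x_i|=1$, and the reductions $\bar x_i$ are linearly independent over the residue ring of $R_\rho$. Independence over the fraction field of that residue ring suffices by the standard lemma.

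The construction of the orthonormal basis uses the maximal-minor argument. Let $\mathcal M^\circ=M\cap (R^\circ)^{r_E}$. Its reduction is a submodule of $\tilde k[\bar t^{\pm}]^{r_E}$, which is free because $\tilde k[\bar t^{\pm}]$ is a principal ideal domain. Choose a basis of $M$ whose maximal minors have the largest possible gcd-norm, maximizing the norm of the vector of maximal minors among bases after scaling. I would then show that such a basis, normalized, reduces to a basis of the saturated reduction. If the reductions were dependent, a lifted relation would allow replacing one vector by a combination divided by a non-unit scalar. That would strictly increase the norm of the minor vector, a contradiction.

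The maximization is attainable because the value group is discrete after the finite extension, and the Gauss norm on $R_\rho$ takes values in that discrete group. Finally I would descend to the original $K$. For this I would either note that an orthonormal basis over the extension can be chosen $K$-rational, since the reductions stay independent, or avoid the issue by running the argument with units $ct^n$ in place of the extension scalars. The second route works because units of norm $\rho^n$ exist in $R_\rho$ even when $\rho\notin|K^\times|$.

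I expect the type-2 case to be the main obstacle, specifically showing that the maximizing basis has linearly independent reductions. That is where the actual content lies; the type-3 case is elimination over a valued field whose norm is multiplicative.
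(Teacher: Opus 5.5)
The gap is in the type-2 step, which you rightly identify as the real content. Your type-3 argument is fine. Greedy pivoting with full clearing gives the same normal form as the paper's $F=BB_I^{-1}$, where the paper instead gets $|F_{ij}|_\rho\le1$ from a maximal minor and Cramer's rule. No residue-field argument is needed there, since coordinate $j_i$ of $\sum_i a_ix_i$ is just $a_ix_{i,j_i}$.

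In the type-2 case, replacing $x_{i_0}$ by $(\sum_i a_ix_i)/\varpi$ gives a basis of $M$ only if $a_{i_0}\in R_\rho^\times$, that is, only if $\bar a_{i_0}$ is a monomial $\bar c\,\bar t^{\,n}$. Otherwise the new tuple spans a proper submodule of $M$, so it is not a competitor in your maximization and no contradiction follows. A relation among the reductions need not have any unit coefficient. For example, in your normalization $\rho=1$ with $p$ odd, $x_1=(t+1,0)$ and $x_2=(1-t,\varpi)$ have norm one, and every relation between $\bar x_1,\bar x_2$ is a multiple of $(\bar t-1,\bar t+1)$.

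Maximizing over all independent tuples instead does not help, because the maximizer need not span $M$. For $M=R_\rho\subseteq R_\rho^1$, the non-unit $t-1$ already attains the maximal norm one.

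The missing idea is to use a primitive relation. The kernel of $(\bar a_i)\mapsto\sum_i\bar a_i\bar x_i$ is a direct summand of $\tilde k[\bar t^{\pm1}]^d$, because the quotient embeds in a free module over this PID. So the kernel contains a unimodular vector. Complete it to $\bar U\in\GL_d(\tilde k[\bar t^{\pm1}])$ and lift to $U$ with entries in $R_\rho^\circ$. Then $\det U$ reduces to a nonzero monomial and is therefore a unit. Hence $XU$ is a basis of $M$ with the same minor norm, and its first vector has norm less than one. Dividing that vector by $\varpi$ strictly increases the minor norm, which gives the contradiction.

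Even after this repair, your route is heavier than the paper's. The paper never maximizes. It never needs $M$ to be free in advance, so it never uses that $R_\rho$ is a PID, which you assert via a trivial Picard group without proof. It also never extends $K$. Instead it works over $K$ with the monomials $w=\varpi^\alpha t^\beta$, whose norm generates the norm group, and $z=\varpi^{-a}t^e$, so that $S_\rho/wS_\rho\simeq k[\bar z^{\pm1}]$. For $\Lambda=M\cap S_\rho^{r_E}$ one has $\Lambda\cap wS_\rho^{r_E}=w\Lambda$. Hence $\Lambda/w\Lambda$ embeds in $k[\bar z^{\pm1}]^{r_E}$ and is free. Any lift of a basis is orthonormal by independence of the reductions, and it generates $\Lambda$ by successive approximation modulo powers of $w$; hence $M=\Lambda[1/w]$. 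You already had this object in hand as your $\mathcal M^\circ$ and its reduction, and lifting a basis of that reduction directly is the step to take. Your second descent route, using monomials $ct^n$ as scalars, is exactly the paper's device. Your first route, choosing an orthonormal basis over the extension to be $K$-rational, is not justified as stated.
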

\begin{proof}
The zero module is immediate. We treat the two kinds of radius separately.

\smallskip\noindent\emph{Case 1: $\rho\notin\Gamma^*$.}
For a nonzero series $f=\sum_n a_nt^n$, two distinct monomials cannot have equal norm: equality would imply $\rho^{n-m}\in|K^\times|$. Since the monomial norms tend to zero in both directions, one strictly dominates the others. Thus $f=a_nt^n(1+h)$ with $|h|_\rho<1$, and the geometric series gives its inverse. Hence $R_\rho$ is a field.

Let the columns of an $r_E\times d$ matrix $B$ form a basis of $M$. Choose $d$ row indices $I$ for which $|\det B_I|_\rho$ is maximal, and set $F=BB_I^{-1}$. Then $F_I=I_d$, and Cramer's rule gives $|F_{ij}|_\rho\le1$. For every $x\in R_\rho^d$,
\[
 |x|_\rho\le|Fx|_\rho\le|x|_\rho.
\]
The columns of $F$ are therefore orthonormal. This is the usual subspace property for orthonormal bases over a valued field; compare \cite[Section~2.5]{BGR84}.

\smallskip\noindent\emph{Case 2: $\rho\in\Gamma^*$.}
Choose a uniformizer $\varpi$ of $K$ and write $\rho=|\varpi|^{a/e}$ with $\gcd(a,e)=1$. Choose integers $\alpha,\beta$ with $e\alpha+a\beta=1$, and put
\[
 w=\varpi^\alpha t^\beta,\qquad z=\varpi^{-a}t^e,\qquad
 S_\rho=R_\rho^\circ,\qquad \Lambda=M\cap S_\rho^{r_E}.
\]
The value $|w|_\rho=|\varpi|^{1/e}$ generates the norm group, so $R_\rho^{\circ\circ}=wS_\rho$. The canonical reduction is
\[
 S_\rho/wS_\rho\simeq k[\bar z,\bar z^{-1}].
\]
Indeed, the norm-one monomials reduce to powers of $z$, and all monomials of norm below one lie in $wS_\rho$. For the general reduction construction, see \cite[Section~2.4]{Ber90}.

Since $w$ is invertible in $R_\rho$, $\Lambda\cap wS_\rho^{r_E}=w\Lambda$. Thus $\Lambda/w\Lambda$ embeds in $k[\bar z,\bar z^{-1}]^{r_E}$ and is finite free over the Laurent polynomial PID. Lift a basis to $f_1,\ldots,f_d\in\Lambda$. Successive approximation modulo powers of $w$ expresses every $x\in\Lambda$ as a linear combination of the $f_i$ with coefficients in $S_\rho$: at step $j$ the error lies in $w^j\Lambda$, and the coefficient series converge in the complete ring $S_\rho$.

If $a_i\in S_\rho$ and $\max_i|a_i|_\rho=1$, independence of the reduced basis implies $|\sum_i a_if_i|_\rho=1$. Rescaling by a power of $w$ gives
\[
 \left|\sum_i a_if_i\right|_\rho=\max_i|a_i|_\rho
 \qquad(a_i\in R_\rho).
\]
Finally $\Lambda[1/w]=M$, so the $f_i$ form an orthonormal $R_\rho$-basis of $M$.
\end{proof}

\subsection{Connection bounds on the image}
The ambient connection has a bounded operator norm. An orthonormal basis transfers this bound to the differential subbundle.
\begin{corollary}\label{cor:connectionimage}
Let $\mathcal{E}$ witness $(s,N)$-boundedness and let $\mathcal{M}\subseteq\mathcal{E}$ be a differential subbundle. Then $\mathcal{M}$ is $(s,\max\{N,1\})$-bounded.
\end{corollary}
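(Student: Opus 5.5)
At each radius $\rho\in[s,1)$ I will replace the given witness basis of $\mathcal{E}_\rho$ by an orthonormal basis of the restricted subbundle $\mathcal{M}_\rho$, obtained from \cref{prop:orthonormal}. I will then bound the connection matrix in that basis by the operator norm of the ambient connection. The model of $M$ is $\mathcal{M}$ itself, with its inclusion into $\mathcal{E}$ and the induced isomorphism over the Robba ring.

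Fix $\rho$ and let $\ee_\rho$ be the witness basis of $\mathcal{E}_\rho$, with $|A_\rho|_\rho\le N$. Use $\ee_\rho$ to identify $\mathcal{E}_\rho$ with $R_\rho^{r_E}$, carrying the coordinate sup norm. Because $\mathcal{M}$ is a subbundle, hence locally a direct summand, $\mathcal{M}_\rho$ is an $R_\rho$-submodule of $R_\rho^{r_E}$; it is finite projective. By \cref{prop:orthonormal} it has an orthonormal basis $\ff_\rho=\ee_\rho F$, where the columns $F_1,\ldots,F_d$ satisfy $|Fx|_\rho=|x|_\rho$ for every column vector $x\in R_\rho^d$. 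In particular $|F|_\rho\le1$.

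Next I estimate the connection on coordinate vectors. For $y\in R_\rho^{r_E}$, the connection acts by $y\mapsto A_\rho y+\del_t y$. Here $|\del_t y|_\rho\le|y|_\rho$, because $\del_t(t^n)=nt^n$ and $|n|\le1$, and $|A_\rho y|_\rho\le N|y|_\rho$. So the operator norm is at most $\max\{N,1\}$. Since $\mathcal{M}$ is a differential submodule, $\nabla_{\del_t}(\ff_\rho)=\ff_\rho A'$ for a unique $A'\in M_d(R_\rho)$. The $j$th column of $A'$ is determined by $FA'_j=A_\rho F_j+\del_t F_j$, so orthonormality gives
\[
 |A'_j|_\rho=|FA'_j|_\rho\le\max\{N,1\}\,|F_j|_\rho\le\max\{N,1\}.
\]
Taking the maximum over $j$ yields $|A'|_\rho\le\max\{N,1\}$. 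This holds for every $\rho\in[s,1)$, which proves the claim.

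The only delicate point is that $\mathcal{M}_\rho$ must be the submodule of $R_\rho^{r_E}$ cut out by $\mathcal{M}$. Flatness, or the direct-summand property of a subbundle, gives injectivity of $\mathcal{M}\otimes R_\rho\to\mathcal{E}_\rho$. Orthonormality is for the restricted norm, which is exactly the norm in which the operator bound is measured. I expect no real obstacle here, since \cref{prop:orthonormal} already does the hard work at both type~2 and type~3 radii.
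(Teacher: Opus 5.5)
Your proposal is correct and is essentially the paper's own proof. You bound the ambient operator $\del_t+A_\rho$ by $\max\{N,1\}$ in the coordinate sup norm, take an orthonormal basis of $\mathcal{M}_\rho$ from \cref{prop:orthonormal}, and use orthonormality to identify each column norm of the induced connection matrix with $\norm{\nabla_{\del_t}f_j}_\rho$; your extra check that $\mathcal{M}_\rho\to\mathcal{E}_\rho$ is injective, since $\mathcal{M}$ is a direct summand, is a harmless point the paper leaves implicit.
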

\begin{proof}
Fix $\rho$, and write $D=\nabla_{\del_t}=\del_t+A_\rho$ in the chosen ambient basis. For every Laurent series, $|\del_t f|_\rho\le|f|_\rho$, so
\[
 \norm{Dv}_\rho\le\max\{N,1\}\norm v_\rho.
\]
Choose an orthonormal basis $f_1,\ldots,f_d$ of $\mathcal{M}_\rho$ by \cref{prop:orthonormal}. If $Df_j=\sum_i b_{ij}f_i$, then
\[
 \max_i|b_{ij}|_\rho=\norm{Df_j}_\rho
       \le\max\{N,1\}\norm{f_j}_\rho=\max\{N,1\}.
\]
This gives the asserted connection bound at every radius.
\end{proof}

\begin{remark}[Scope of the radiuswise bounds]\label{rem:globalbasis}
The explicit ambient model, including after coefficient restriction, has a global-basis witness. Dwork's trick extends the projector onto the desired summand qualitatively, without a uniform estimate for its norm. Its columns generate the image, but even a bound on those generators would not by itself supply a global basis with controlled connection matrix: that requires a quantitative trivialization, which qualitative freeness alone does not provide. The proof of \cref{cor:connectionimage} instead uses the induced ambient norm and chooses an orthonormal basis separately on each circle. It needs no projector-norm estimate, but establishes no analytic compatibility between these bases as the radius varies.

These are limitations of the present argument, not evidence that a uniformly bounded global basis is impossible. In particular, the argument leaves open whether every $M\in\QU(r,b)$ admits a global-basis witness with inner radius and connection bound depending only on $r,b,p$, either with the constants of \cref{thm:A} or after changing those constants. Such a bound would be an additional assertion; compatibility between models on different test curves would still require a separate argument.
\end{remark}

\begin{remark}[Radiuswise Frobenius normalization]\label{rem:radiusmeaning}
Let $\mathcal{E}$ be a $(\sigma^m,\nabla)$-model on $A[s,1)$ with a radiuswise connection bound $N$, and put $q=p^m$. Choose witnessing bases on the radii in $[s,s^{1/q})$, and transport them outwards by Frobenius. On the interval $[s^{1/q^j},s^{1/q^{j+1}})$ the resulting connection matrices have norm at most $|q|^jN$, while the Frobenius matrices between the chosen bases are identities. This follows from horizontality: $A_\rho=q\sigma^m(A_{\rho^q})$, the same Frobenius-transport argument as in \cite[proof of Lemma~6.1.3]{Ked05}. It is a normalization of independently chosen circle bases, not an additional quantitative estimate.

For example, for a constant rank-one module over $\Q_p$ with $\Phi(e)=ce$, the bases $e_\rho=c^je$ on these intervals give identity Frobenius matrices for every $c\in\Q_p^\times$. Thus the normalization does not detect the Newton slope. It neither supplies the model annulus nor produces a single global witnessing basis. Estimates in a prescribed analytic basis would require control of the change of basis.
\end{remark}

\section{Assembly of the main theorems}\label{sec:assembly}
\subsection{Matsuda decomposition and the quasi-constant reduction}\label{subsec:matsuda}
The preceding sections construct and control models for the quasi-constant constituents. We now use Matsuda's theorem to separate finite local monodromy from unipotent extensions and assemble the models for a general quasi-unipotent module.
\begin{theorem}[Matsuda decomposition]\label{thm:matsuda}
Every quasi-unipotent differential module $M$ over $\mathcal{R}_{K,t}$ admits a decomposition
\[
 M\simeq\bigoplus_i \mathcal{F}_i\otimes_{\mathcal{R}_{K,t}}U_i,
\]
where the $\mathcal{F}_i$ are pairwise nonisomorphic irreducible quasi-constant modules and the $U_i$ are nonzero unipotent modules. The summand corresponding to $\mathcal{F}_i$ is the canonical $\mathcal{F}_i$-isotypical part of $M$: all of its irreducible differential subquotients are isomorphic to $\mathcal{F}_i$.
\end{theorem}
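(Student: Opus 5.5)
The decomposition is Matsuda's theorem as cited. The plan is to recover it from the monodromy correspondence of \cref{subsec:monodromy} together with the semisimplicity of quasi-constant modules (\cref{lem:absolutesplitting}), rather than reprove it from scratch.

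\textbf{Step 1: a finite trivializing cover.} Let $M$ be quasi-unipotent. Choose a finite Galois formally \'etale cover $S/\mathcal{R}_{K,t}$, with group $G$, over which $M$ becomes unipotent. Then
\[
 W=H^0_\nabla(S[\log t]\otimes M)
\]
is a finite-dimensional $K'$-space of dimension $\rk M$. It carries a semilinear $G$-action and a nilpotent endomorphism $N$ coming from $\del_t\log t=1$, and $N$ commutes with $G$. This is the semilinear analogue of the correspondence of \cite[Lemma~4.3.2]{Ked05}: $M$ is recovered as the $G$-invariants of $S[\log t]\otimes_{K'}W$, with $N$ acting through $\log t$.

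\textbf{Step 2: isotypic decomposition of $W$.} By semilinear Maschke \cite[Section~4.2]{Ked05}, $W$ is a semisimple $G$-representation. Decompose it into isotypic components $W=\bigoplus_i W_i$, indexed by the pairwise nonisomorphic irreducible semilinear representations $V_i$. Since $N$ is $G$-equivariant, it preserves each $W_i$.

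\textbf{Step 3: the tensor product structure.} Write $W_i\simeq V_i\otimes_{D_i}H_i$, where $D_i=\End_G(V_i)$ and $H_i=\operatorname{Hom}_G(V_i,W_i)$. Then $N|_{W_i}$ acts on $H_i$ by a nilpotent $D_i$-linear map. Set $\mathcal{F}_i=(S\otimes V_i)^G$. For $U_i$, take the unipotent module attached to $H_i$ with its nilpotent operator, descended to $\mathcal{R}_{K,t}$.

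\textbf{Step 4: the isotypic assertion.} Any irreducible differential subquotient of $\mathcal{F}_i\otimes U_i$ corresponds to a subquotient of $W_i$. Such a subquotient is isomorphic to $V_i$, because the kernel filtration of $N$ is $G$-stable, so its graded pieces are $V_i$-isotypic.

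\textbf{Main obstacle: division algebras.} If $D_i$ is a noncommutative division algebra, the tensor product over $D_i$ is not literally a tensor product of $\mathcal{R}$-modules. The remedy I would try first is a finite coefficient extension splitting $D_i$, as in \cref{lem:absolutesplitting}. Over the extended coefficients the factorization becomes literal with $D_i=K''$. It then remains to descend the decomposition back to $K$ using Galois invariance of the canonical isotypic parts. The isotypic part over $K$ is defined intrinsically as the sum of all subobjects whose irreducible subquotients are isomorphic to $\mathcal{F}_i$, so this descent is canonical.

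The remaining issue is whether a $K$-rational tensor factorization exists. If it does not, I would fall back on citing \cite[Theorem~4.6.6]{Ked05} for the existence statement. Only the canonical isotypic description would then be proved here, and that description is what the later use requires.
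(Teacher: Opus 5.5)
The paper gives no argument beyond citing \cite[Theorem~4.6.6]{Ked05}, following \cite[Theorem~7.8]{Mat02}, so your fallback is exactly its proof. As a self-contained argument, however, your route has a genuine gap at Step~3, and the gap is wider than the one you flag.

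\textbf{The gap.} If $U_i$ is the unipotent module over $\mathcal{R}_{K,t}$ attached to $H_i$ regarded as a $K$-space, then
\[
 \rk(\mathcal{F}_i\otimes U_i)=\dim_KD_i\cdot\dim_{K'}W_i .
\]
This exceeds the correct rank $\dim_{K'}W_i$ by the factor $\dim_KD_i$. So the construction fails whenever $D_i\ne K$, that is, exactly when $\mathcal{F}_i$ is irreducible but not absolutely irreducible. This already happens when $D_i$ is a commutative field strictly larger than $K$, so noncommutativity is not what causes it.

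Your proposed remedy is also not carried out. After a splitting extension, $\mathcal{F}_i$ breaks into absolutely irreducible pieces, each carrying its own unipotent factor. Showing that these all come from a single unipotent $U_i$ over $K$ is the original problem in another guise.

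The conditional fallback is incoherent as stated. The cited theorem asserts precisely that the $K$-rational factorization exists, so it cannot be invoked for the case in which it does not. Nor is the isotypic description all that is used later: \cref{lem:coeffsplitting} distributes the tensor products $\mathcal{F}_i\otimes U_i$ over $K$ after splitting. (Your Steps~1--3, run directly over a splitting field, would also serve there.)

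\textbf{The missing idea.} The fix is elementary and needs no coefficient extension or descent. The space $H_i=\operatorname{Hom}_G(V_i,W_i)$ is a finite-dimensional right $D_i$-vector space, and $N$ acts on it $D_i$-linearly and nilpotently. The usual construction of a Jordan basis uses only linear algebra and works verbatim over a division ring, so it applies here. Let $H_i^0$ be the $K$-span of a Jordan basis, and let $N^0=N|_{H_i^0}$. Then
\[
 (H_i,N)\cong(H_i^0\otimes_KD_i,\ N^0\otimes1),
\]
with $\dim_KH_i^0=\dim_{D_i}H_i$. Consequently
\[
 W_i\cong H_i\otimes_{D_i}V_i\cong H_i^0\otimes_KV_i ,
\]
compatibly with $G$ (acting on $V_i$) and with $N$. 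The unipotent module attached to $(H_i^0,N^0)$ is the required $U_i$, and the ranks now match. With this replacement, your Steps~1, 2 and~4 give the theorem over $K$ itself.
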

\begin{proof}
This is \cite[Theorem~4.6.6]{Ked05}, following \cite[Theorem~7.8]{Mat02}.
\end{proof}

The ramification estimates concern the quasi-constant factors $\mathcal{F}_i$, while the unipotent factors $U_i$ can be handled on every annulus. In particular,
\[
 \rk M=\sum_i\rk(\mathcal{F}_i)\rk(U_i),\qquad
 b(M)=\max_i b(\mathcal{F}_i).
\]
The theorem is over $K$ itself. The finite coefficient extension in the next lemma is used only to make the quasi-constant factors absolutely irreducible, so that the rank-one realization of \cref{sec:induction} applies.

\begin{lemma}[Matsuda reduction after coefficient extension]\label{lem:coeffsplitting}
For every $M\in\QU(r,b)$, there exists a finite coefficient extension $K'/K$, depending on $M$, such that
\[
 M_{K'}\simeq\bigoplus_i \mathcal{F}_i\otimes U_i,
\]
where the $\mathcal{F}_i$ are absolutely irreducible quasi-constant modules and the $U_i$ are unipotent. One has $b(\mathcal{F}_i)\le b$ and $\sum_i\rk(\mathcal{F}_i)\rk(U_i)\le r$.
\end{lemma}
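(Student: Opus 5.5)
The plan is to start from Matsuda's decomposition over $K$ itself (\cref{thm:matsuda}), $M\simeq\bigoplus_i\mathcal{F}_i\otimes U_i$, with $\mathcal{F}_i$ irreducible quasi-constant and $U_i$ unipotent. Each $\mathcal{F}_i$ has rank at most $r$, since $\rk M=\sum_i\rk(\mathcal{F}_i)\rk(U_i)\le r$. By definition of $b(M)$, each $\mathcal{F}_i$ also has highest break at most $b$. Thus each $\mathcal{F}_i$ satisfies the hypotheses of \cref{lem:absolutesplitting}.

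Apply \cref{lem:absolutesplitting} to each $\mathcal{F}_i$. This gives finite extensions $K_i'/K$ with $\mathcal{F}_{i,K_i'}\simeq\bigoplus_j\mathcal{F}_{ij}$, where the $\mathcal{F}_{ij}$ are absolutely irreducible quasi-constant modules with $b(\mathcal{F}_{ij})\le b$ and $\sum_j\rk\mathcal{F}_{ij}=\rk\mathcal{F}_i$. Since there are finitely many indices, take $K'$ to be a common finite extension, for instance the compositum. I need two facts about further coefficient extension:
\begin{itemize}
\item absolute irreducibility persists under extension, which holds by definition (equivalently, $H^0_\nabla(\End\mathcal{F}_{ij})$ stays one-dimensional, as in that proof);
\item the breaks do not increase under extension, which was already used in \cref{lem:absolutesplitting}.
\end{itemize}
Unipotence is also preserved by coefficient extension: $U_i$ becomes constant after adjoining $\log t$, and this survives base change to $K'$.

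Base change commutes with direct sums and tensor products, so
\[
M_{K'}\simeq\bigoplus_{i,j}\mathcal{F}_{ij}\otimes U_{i,K'}.
\]
This is the required form. The rank identity $\sum_{i,j}\rk(\mathcal{F}_{ij})\rk(U_i)=\sum_i\rk(\mathcal{F}_i)\rk(U_i)=\rk M\le r$ gives the rank bound.

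The only delicate point is bookkeeping for the break convention. The quantity $b(M)$ is defined through Matsuda's constituents over $K$, and I must check that $b(\mathcal{F}_{ij})\le b(\mathcal{F}_i)$. This holds because the monodromy representation of $\mathcal{F}_{ij}$ is a summand of the restriction of that of $\mathcal{F}_i$ to an open inertia subgroup fixing $K'$. The unramified part of the coefficient extension does not change inertia, and the upper-numbering break of a subrepresentation is at most that of the whole, as in \cref{rem:finiterealization}. I expect no real obstacle beyond this.
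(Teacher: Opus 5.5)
Your proposal is correct and matches the paper's proof. Like the paper, you apply Matsuda's decomposition over $K$, split each quasi-constant factor by \cref{lem:absolutesplitting} over a common finite extension, and distribute the tensor products; unipotence and absolute irreducibility survive the extension, and your extra break bookkeeping is unnecessary, since $b(\mathcal{F}_{ij})\le b$ is already part of the conclusion of \cref{lem:absolutesplitting} (a coefficient extension only enlarges the constant residue field and does not pass to a smaller inertia subgroup).
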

\begin{proof}
Apply \cref{thm:matsuda} over $K$, and then apply \cref{lem:absolutesplitting} to its finitely many quasi-constant factors. Choose a common finite coefficient extension splitting all of them and distribute the tensor products over their direct-sum decompositions. Unipotent factors have constant Jordan models and remain unipotent after extension. Tensoring with a nonzero unipotent factor preserves the highest inertia break, which gives the asserted bounds.
\end{proof}

\subsection{Unipotent factors}
\begin{lemma}\label{lem:unipotent}
A unipotent differential module $U$ has $b(U)=0$ and is $(s,1)$-bounded with a global-basis witness for every $s\in(0,1)$.
\end{lemma}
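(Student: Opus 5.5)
The plan is to reduce to a Jordan normal form with constant nilpotent matrix and then read off both assertions directly. By definition, $U$ becomes constant after adjoining $\log t$. By the standard structure theory of unipotent modules over $\mathcal{R}_{K,t}$ (the unipotent factors in \cref{thm:matsuda}, following \cite[Section~4.6]{Ked05}), $U$ admits a basis in which $\nabla_{\del_t}$ has a constant nilpotent matrix $A\in M_d(K)$. I will take this as the starting point, since the paper already says the unipotent factors have constant nilpotent connection matrices. After a constant change of basis over $K$, which is possible because nilpotent Jordan form needs no field extension, $A$ becomes a direct sum of Jordan blocks with entries in $\{0,1\}$, exactly as in the unipotent Jordan model of \cref{rem:elementaryfrob}.

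For the break statement I will argue as follows. $U$ is already unipotent, so in Matsuda's decomposition its only quasi-constant constituent is the trivial rank-one module, whose inertia representation is trivial. Hence $b(U)=0$ by the convention that the maximum is zero when there is no positive break.

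For boundedness, fix $s\in(0,1)$ and let $\mathcal{E}$ be the free module on $A_{K,t}[s,1)$ with basis $\ee$ and $\nabla_{\del_t}\ee=\ee A$. This is a connection because $A$ is constant, hence analytic on every annulus. Its Robba-ring extension is $U$ via the chosen basis, so it is a model in the sense of \cref{def:model}. Taking the single global basis $\ee$ at every radius, we have $|A|_\rho=\max|a_{ij}|\le1$ for all $\rho$, since the entries are $0$ or $1$. This gives a global-basis witness for $(s,1)$-boundedness.

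The only step needing care is the existence of a constant nilpotent matrix over $K$ itself, with no coefficient extension, in the Robba-ring basis. If one starts only from the definition, one would argue as follows. Horizontal sections of $U\otimes\mathcal{R}[\log t]$ give a $K$-space $V$ of dimension $\rk U$, stable under the nilpotent operator $N=$ the derivative in $\log t$. The module $U$ is recovered as $(\mathcal{R}[\log t]\otimes_K V)^{\nabla}$-twisted, namely the elements $\exp(-\log t\cdot N)v$. Then $\nabla_{\del_t}$ acts on that basis by $-N$, which is constant and nilpotent over $K$.
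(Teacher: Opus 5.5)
Your proposal is correct and follows the paper's proof: the paper also takes a basis with constant nilpotent connection matrix from the classification of unipotent modules \cite[Proposition~4.5.3]{Ked05}, puts it in Jordan form over $K$ by a constant change of basis so that one global basis has connection norm at most one on every annulus, and notes that the inertia representation has no positive break. Your fallback derivation from the definition is also sound, provided you describe $U$ as the kernel of the derivation $\partial/\partial\log t$ on $U\otimes\mathcal{R}_{K,t}[\log t]$ rather than as a ``$\nabla$-twisted'' invariant; that kernel is exactly the $\mathcal{R}_{K,t}$-span of the elements $\exp(-\log t\cdot N)v$ you write down.
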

\begin{proof}
The classification of unipotent differential modules gives a basis in which $\nabla_{\del_t}$ has a constant nilpotent matrix; see \cite[Proposition~4.5.3]{Ked05}. A constant change of basis puts this matrix into Jordan form, with entries zero or one. This defines a model on every annulus, with connection norm at most one. The inertia representation of the unipotent factor has no positive ramification break.
\end{proof}

\subsection{Elementary operations}
\begin{lemma}\label{lem:operations}
Suppose $M_i$ is $(s_i,N_i)$-bounded. Put $s=\max_i s_i$ and $N=\max_i N_i$. Then $\bigoplus_iM_i$ and $\bigotimes_iM_i$ are $(s,N)$-bounded. The same assertion holds for global-basis witnesses. Duals preserve the connection bound.
\end{lemma}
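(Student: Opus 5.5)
The plan is to build witnesses for the direct sum, tensor product, and dual directly from witnesses of the factors, one radius at a time. Fix witnesses $(\mathcal{E}_i,(\ee_{i,\rho})_\rho)$ for the $(s_i,N_i)$-boundedness of each $M_i$. Restrict each model from $A[s_i,1)$ to the common annulus $A[s,1)$ with $s=\max_i s_i$; restriction preserves the isomorphism with the Robba-ring module. The bases $\ee_{i,\rho}$ for $\rho\in[s,1)$ are still available and satisfy $|A_{i,\rho}|_\rho\le N_i\le N$. If the witnesses are global-basis witnesses, the restricted global bases serve for every radius.

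For the direct sum, take the model $\bigoplus_i\mathcal{E}_i$ with the concatenated basis at each radius. The connection matrix is block diagonal with blocks $A_{i,\rho}$, so its $\rho$-norm is the maximum of the block norms, which is at most $N$. For the tensor product, take the model $\bigotimes_i\mathcal{E}_i$ over $\mathscr A_s$, whose Robba extension is $\bigotimes_iM_i$, and use the tensor basis $e_{1,j_1}\otimes\cdots\otimes e_{k,j_k}$. The Leibniz rule gives the connection matrix $\sum_i 1\otimes\cdots\otimes A_{i,\rho}\otimes\cdots\otimes1$. Each entry is a sum of entries of the $A_{i,\rho}$, and in fact each Kronecker term contributes its entries to disjoint positions up to this sum structure. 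By the ultrametric inequality, every entry has norm at most $\max_i|A_{i,\rho}|_\rho\le N$. For the dual, take the dual model with the dual basis; the connection matrix is $-A_\rho^{\mathsf t}$, which has the same norm.

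All of these constructions use only operations that are canonical in the basis. Global bases therefore give global bases, which settles the global-basis variant.

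The only point requiring care is the tensor-product norm estimate. There one should observe that each entry of the Kronecker sum is a finite sum of single entries of the factors, with no products of two connection entries. The ultrametric bound then gives the maximum rather than a sum, which is why the constant $N$ does not grow.
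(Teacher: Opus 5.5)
Your proposal is correct and follows essentially the same route as the paper: restrict all witnesses to the common annulus $A_{K,t}[s,1)$, note that the connection matrix is block diagonal for the direct sum and is the Kronecker sum $\sum_i I\otimes\cdots\otimes A_{i,\rho}\otimes\cdots\otimes I$ for the tensor product, bound both by the ultrametric inequality, and use $-A_\rho^{\mathsf t}$ for the dual. Your observation that each entry of the Kronecker sum is a sum of single entries of the $A_{i,\rho}$, with no products of connection entries, is what keeps the bound at $N$.
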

\begin{proof}
Restrict all models to $A_{K,t}[s,1)$. In the direct-sum basis, the connection matrix is block diagonal. In the tensor-product basis it is
\[
 \sum_i I\otimes\cdots\otimes A_i\otimes\cdots\otimes I.
\]
The ultrametric inequality gives norm at most $N$ in both cases. The dual connection matrix is $-A_i^{\mathsf T}$.
\end{proof}

\subsection{Realization in normal form}
\begin{proposition}[Realization in normal form]\label{prop:geometric}
For an absolutely irreducible constituent of rank $d$ and break at most $b$, after a finite coefficient extension the inducing data of \cref{prop:induced} admit tame power maps and the wild normal forms of \cref{subsec:rankonenormal,subsec:wildsetup}, with their equations and connections defined over $C_\ell F_{j_*}$ for some finite $\ell/k$, where $j_*=j_*(B_*(d,b,p))$.
The purely wild inducing factor $\widetilde\LL$ of \cref{subsec:tametwist} and the wild steps satisfy
\[
 b(\widetilde\LL)\le B_*(d,b,p),\qquad
 m_{i+1}\le B_*(d,b,p)\quad(0\le i<\nu).
\]
\end{proposition}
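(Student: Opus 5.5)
The plan is to translate the group-theoretic data of \cref{prop:induced} into explicit residue-field extensions. Then each piece is lifted to characteristic zero using the normal forms already fixed. First, by \cref{rem:finiterealization}, after an unramified coefficient extension corresponding to a finite $\ell/k$, the constituent is realized by a faithful irreducible representation $\theta$ of a totally ramified Galois group $I=\operatorname{Gal}(E/\ell((t)))$, with $b(I)\le b$. The chain $Q\times T_0\le H_0\le H\le I$ gives intermediate fields $E^{H}\subseteq E^{H_0}\subseteq E^{Q\times T_0}$, all totally ramified over $\ell((t))$.

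\textbf{Tame steps.} The first two steps have degrees $d'$ and $n_1=[T:T_0]$, both prime to $p$. Since $\ell$ is perfect, a totally ramified tame extension of $\ell((t))$ of degree $n$ is $\ell((t^{1/n}))$ after adjoining an $n$th root of a unit to $\ell$, hence after enlarging $\ell$. So after a finite residue extension we may take coordinates $u$ and $w_\nu$ with $t=u^{d'}$ and $u=w_\nu^{n_1}$. These coordinates lift verbatim to the tame power maps of \cref{lem:tame}. They are defined over $C_\ell$, and their analytic models are defined on all annuli.

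\textbf{Wild steps.} Next, choose the normal series from $Q\times T_0$ to $H_0$ given in the proof of \cref{prop:induced}, with every step normal of index $p$. Each corresponding degree-$p$ step $k((\bar w_{i+1}))\subset k((\bar w_i))$ is cyclic Artin--Schreier. Enlarging $\ell$ kills the constant term, and we obtain the reduced equation with $\bar P_{i+1}$ of degree $m_{i+1}$ prime to $p$. The degree $m_{i+1}$ is the break of this step, computed in the $\bar w_{i+1}$ normalization. By \cref{lem:subgroupbreak}, the group $Q\times T_0$ and each intermediate subgroup have index at most $d\,n_1\le d(p^{2d}-1)$ in $I$. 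The step's break is an upper break of a quotient of the intermediate group $H_i'$ attached to $w_{i+1}$. Its upper numbering relative to $w_{i+1}$ is the upper numbering inherited from $I$ restricted to $H_i'$, so $m_{i+1}\le b(H_i')\le [I:H_i']\,b\le B_*(d,b,p)$.

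\textbf{Rank-one factor.} The character $\lambda\boxtimes1$ of $Q\times T_0$ corresponds to $\widetilde\LL$. By \cref{lem:subgroupbreak} with index $d[T:T_0]$, its break is at most $B_*(d,b,p)$. Since $\lambda\boxtimes 1$ is purely wild, its reduced Witt vector has length $n+1\le 1+\lfloor\log_p B_*\rfloor$. The normal form of \cref{subsec:rankonenormal} therefore uses only $\pi_0,\dots,\pi_{j_*}$. After a final residue extension removing the constant Witt part, everything is defined over $C_\ell F_{j_*}$.

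\textbf{Equations and connections.} The lifted equations \eqref{eq:wildequation} use Teichm\"uller lifts and $\pi=\pi_0$. The pushforward connections come from the formulas of \cref{sec:push}. All of these therefore lie over $C_\ell F_{j_*}$.

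\textbf{Main obstacle.} The hardest step is identifying the lifted Robba-ring tower, built from \eqref{eq:wildequation} and the power maps, with the inducing cover $h_2\circ h_1\circ g$ of \cref{prop:induced}. Both are finite étale lifts of the same residue tower, so henselianity of $\mathcal{R}^{\mathrm{int}}$ gives uniqueness. The pushforward then corresponds to induction, as stated in \cref{subsec:monodromy}. The remaining point is to keep the upper-numbering normalizations consistent between the $w_{i+1}$-coordinate and the base $t$, which is where the index bound of \cref{lem:subgroupbreak} enters.
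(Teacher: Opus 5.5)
Your proposal is correct and follows essentially the paper's proof. You normalize the tame steps to power maps and the wild steps to reduced Artin--Schreier equations by enlarging $\ell$, bound $b(\widetilde\LL)$ and the $m_{i+1}$ through \cref{lem:subgroupbreak} with index at most $d[T:T_0]$, read off the Lubin--Tate cutoff from \eqref{eq:rankonebreak}, and identify the lifted tower with the inducing cover by henselian uniqueness.

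Three small points. First, only the lower filtration restricts to $H_i'$; upper numbering does not, which is exactly why the factor $[I:H_i']$ appears. So $m_{i+1}\le b(H_i')$ should be justified by Herbrand's theorem for the degree-$p$ quotient of $H_i'$, not by inheriting $I$'s upper numbering. Second, the paper also records the tame twist $\mathcal T_1$, with exponent in $\mathbb{Z}_{(p)}$, which your proposal omits. Third, the paper makes explicit the sign and unit-power normalization that makes the Witt class realize exactly $\lambda\boxtimes1$, which you leave implicit in the classification.
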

\begin{proof}
By \cref{rem:finiterealization}, the finite monodromy group and all intermediate subgroups in the inducing construction are realized by residue covers after a finite coefficient extension. Realize the finitely many character values and intertwining matrices over a further finite extension if necessary. We may enlarge the finite residue extension $\ell/k$ during the following choices.

\smallskip\noindent\emph{Coordinates and covering equations.}
In residue coordinates, a tame map of degree $a$ has the form $x=cv^aU(v)$, where $c\in\ell^\times$ and $U(v)\in1+v\ell[[v]]$. Adjoin an $a$th root of $c$. Since $(a,p)=1$, Hensel's lemma gives an $a$th root of $U(v)$ in $1+v\ell[[v]]$, and the new coordinate $c^{1/a}vU(v)^{1/a}$ makes the map a power map. Starting with the fixed coordinate $t$, do this first for $h_2$ and then for $h_1$, obtaining $u^{d'}=t$ and $w_\nu^{n_1}=u$ in the notation of \cref{fig:cover-tower}.

Proceed upwards through the wild tower, from $w_\nu$ to $w_0$. At each step choose a reduced Artin--Schreier equation with a negative polynomial part and a constant part. A finite extension of $\ell$ removes the constant part. The uniformizer $y_i^{-\alpha_{i+1}}w_{i+1}^{-\beta_{i+1}}$ then supplies the next coordinate, as in \cref{subsec:wildsetup}. The same finite-extension procedure removes the constant Witt part of the inducing rank-one character. All these choices require only finitely many algebraic residue elements; unramified coefficient extensions realize them, and the geometric degrees and breaks are unchanged.

Choose Teichm\"uller lifts of the reduced covering polynomials, and lift the wild equations by \eqref{eq:wildequation}. These equations have the same finite \'etale residue covers as the original inducing data. The henselian lifting correspondence for the integral Robba ring identifies the two lifts \cite[Proposition~3.5.5 and Definition~3.5.6]{Ked05}; their isomorphisms and inverses are therefore analytic on a sufficiently small outer collar. This identification is needed only over the Robba ring; the uniform model radii are supplied by the explicit estimates of \cref{sec:push}.

\smallskip\noindent\emph{The inducing character and its bounds.}
Remove the scalar tame factor as in \cref{lem:twist}. Fix the identification of cyclic $p$-power character groups with roots of unity associated with the chosen Lubin--Tate parameters \cite[Definition~4.14]{Pul07}. Choose the Artin--Schreier--Witt class whose associated connection realizes exactly $\lambda\boxtimes1$, using the character--connection correspondence of \cite[Section~6.3]{CP09}. With our convention $\omega=\del_w\log Y$, the horizontal generator is $e/Y$, so its character is the inverse of the character on $Y$. If an initial choice gives a unit power of the desired character, multiply its Witt class by an inverse unit modulo its $p$-power order, represented by an integer prime to $p$, and then reduce again. This multiplication is in the Witt group, not componentwise multiplication of its coordinates. Thus the normal form realizes the required character, including its sign convention.

Write $n_1=[T:T_0]$. Every subgroup occurring between $Q\times T_0$ and $I$ has index at most $dn_1\le d(p^{2d}-1)$. By \cref{lem:subgroupbreak}, both the inducing rank-one character and the cyclic degree-$p$ quotients defining the wild steps have break at most $B_*(d,b,p)$. Removing a tame factor preserves all positive breaks, proving the displayed bounds. By \eqref{eq:rankonebreak}, every Lubin--Tate level $j$ needed for the reduced purely wild character satisfies $p^j\le B_*(d,b,p)$; if this bound is less than one, the purely wild character is trivial. Hence only $\pi_0,\ldots,\pi_{j_*}$ are needed. Use the Witt-monomial lift of \cref{subsec:rankonenormal}, and adjoin $F_{j_*}$.

\smallskip\noindent\emph{Field of definition.}
Take $\ell$ large enough to contain the residue coefficients and the constants used in these normalizations. A tame differential factor has an exponent in $\mathbb{Z}_{(p)}$; extending its scalar character across a tame cyclic quotient divides the exponent by a prime-to-$p$ integer. The following table records sufficient coefficient fields for the chosen equations and connections:
\begin{center}
\small
\setlength{\tabcolsep}{4pt}
\renewcommand{\arraystretch}{1.2}
\begin{tabular}{@{}p{0.23\textwidth}p{0.44\textwidth}p{0.25\textwidth}@{}}
\toprule
Object & Coefficients & Field \\
\midrule
Tame covers & Rational coefficients in $u^{d'}=t$, $w_\nu^{\,n_1}=u$ & $\Q_p$ \\
Wild covers & Teichm\"uller coefficients of $P_{i+1}$, and $\pi_0$ & $C_\ell F_0\subseteq C_\ell F_{j_*}$ \\
Rank-one $\widetilde\LL$ & Teichm\"uller Witt-monomial coefficients, and $\pi_0,\ldots,\pi_{j_*}$ & $C_\ell F_{j_*}$ \\
Tame twists & Exponents in $\mathbb{Z}_{(p)}$ & $\Q_p$ \\
\bottomrule
\end{tabular}
\end{center}
The comparison with the original constituent takes place over a common finite coefficient extension containing $K$ and $C_\ell F_{j_*}$. The auxiliary coefficient Frobenius preserves $C_\ell$ and fixes $F_{j_*}$, by \cref{rem:coefficientlift}.
\end{proof}

\subsection{The explicit ambient model}
\begin{proposition}\label{prop:ambientmodel}
There are $s=s(r,b,p)<1$ and $N_0=N_0(r,b,p)\ge1$ with the following property. Put $j_*=j_*(B_*(r,b,p))$. For $M\in\QU(r,b)$ over $K$, there is a finite extension $L/K$, with residue field $\ell$, and an explicit differential model $V_0$ over
\[
 F=C_\ell F_{j_*},\qquad
 C_\ell=\operatorname{Frac}W(\ell),\qquad F_{j_*}=\Q_p(\pi_{j_*}),
\]
such that:
\begin{enumerate}
\item $F\subseteq L$, and $L/F$ is finite;
\item $V_0$ has a global-basis $(s,N_0)$-witness, and $M_L$ is a differential direct summand of the differential Robba module associated with its base change to $L$;
\item $V_0$ carries an auxiliary Frobenius structure on this model annulus for some power of the lift acting by Witt Frobenius on $C_\ell$, fixing $F_{j_*}$, and sending $t$ to $t^p$.
\end{enumerate}
\end{proposition}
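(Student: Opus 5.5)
The plan is to build $V_0$ constituent by constituent and then assemble. First apply \cref{lem:coeffsplitting} to $M$, obtaining a finite extension over which $M\simeq\bigoplus_i\mathcal{F}_i\otimes U_i$ with $\mathcal{F}_i$ absolutely irreducible of rank $d_i$, $\sum_i d_i\rk U_i\le r$, and $b(\mathcal{F}_i)\le b$. For each $\mathcal{F}_i$ apply \cref{prop:induced} and \cref{prop:geometric}. This gives a finite $\ell_i/k$ and explicit data over $C_{\ell_i}F_{j_*(B_*(d_i,b,p))}$: tame power maps $h_1,h_2$, a wild tower of $\nu_i=v_p(d_i)$ steps with $m_{i+1}\le B_*(d_i,b,p)$, and a purely wild normal form $\widetilde\LL$ with $b(\widetilde\LL)\le B_*$. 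Since $B_*$ is increasing in $d$, all levels satisfy $j_*\le j_*(B_*(r,b,p))$. Take $\ell$ to be the compositum of the $\ell_i$, set $F=C_\ell F_{j_*}$, and let $L$ be a compositum of $F$, $K$ and the splitting extensions, which is finite over $F$. The ambient model is
\[
 V_0=\bigoplus_i h_{2*}\bigl(h_{1*}g_*\widetilde\LL_i\otimes\mathcal T_{1,i}\bigr)\otimes U_i ,
\]
using \eqref{eq:ambient}. By the Frobenius reciprocity argument of \cref{subsec:tametwist}, each $\mathcal{F}_{i,L}$ is a differential direct summand of the $i$th pushforward. Tensoring with $U_i$ and summing then shows that $M_L$ is a differential direct summand of $V_{0,L}$.

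For the analytic bounds, fix $c=|\pi|^p$, which depends only on $p$. By \cref{lem:rankone}, $\widetilde\LL_i$ is $(c^{1/(B_*+1)},c^{-1})$-bounded with a global basis. \Cref{prop:wild} then gives a global-basis witness for $g_*\widetilde\LL_i$ on $A[c^{1/(B+1)},1)$ with bound $(p/c)^{\nu}c^{-1}$, where $B\le\max\{1,B_*(r,b,p)\}$. Next apply \cref{lem:tame} twice, with degrees $n_1\le p^{2r}-1$ and $d'\le r$. This raises the inner radius to a power bounded by $r(p^{2r}-1)$ and multiplies the bound by a factor $s^{1-n}$ with $n$ bounded in the same way. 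The tame twists and unipotent factors are $(s,1)$-bounded on every annulus by \cref{lem:rankone} and \cref{lem:unipotent}. \Cref{lem:operations} combines the finitely many pieces, taking the maximum radius and bound. Because each quantity is bounded in terms of $r,b,p$, taking maxima over the finitely many possible $(d,\nu,n_1,d')$ gives $s(r,b,p)$ and $N_0(r,b,p)$. To leave room for Frobenius, I would enlarge $s$ to the radius $S^{n_1d'}$, where $S$ is the bottom radius supplied by \cref{prop:wildfrob} with $s_{\mathrm{in}}$ taken from \cref{lem:rankonefrob}. These quantities also depend only on $B,p,c$.

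For Frobenius, start from \cref{lem:rankonefrob}, which gives a level-one structure on $\widetilde\LL_i$ for the standard lift with the coefficient action of \cref{rem:coefficientlift}. \Cref{prop:wildfrob} pushes it through the wild tower relative to the standard bottom lift $w_\nu\mapsto w_\nu^{q}$. \Cref{rem:tamefrob} carries it through $h_1$ and $h_2$. \Cref{rem:elementaryfrob} supplies structures on the tame twists and on the Jordan models at suitable levels. Taking a common level $m$ (a common multiple of all these levels), iterating the level-one structures to level $m$, and using \cref{prop:wildfrob}, which holds for every $a$ with the same $S$, gives a $\sigma^m$-structure on every summand. This power fixes $F_{j_*}$, acts by Witt Frobenius on $C_\ell$, and sends $t\mapsto t^{p^m}$. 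Tensor products and direct sums then give the structure on $V_0$ over the whole model annulus.

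I expect the main obstacle to be the radius bookkeeping between the wild and tame stages. Each tame pushforward replaces the radius by its $n$th power, so the Frobenius subannulus in the $t$-coordinate must still sit inside the region where \cref{prop:wildfrob} produces an analytic inverse. I would check that $(S^{1/q})^{n_1d'}=(S^{n_1d'})^{1/q}$, which makes the radii match exactly. The same conversion of radii is needed to confirm that the bounds in \cref{lem:tame} use this final inner radius, which is uniform in $r,b,p$.
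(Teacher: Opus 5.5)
Your proposal is correct and follows essentially the same route as the paper's proof. You use Matsuda reduction via \cref{lem:coeffsplitting}, the realization of \cref{prop:induced,prop:geometric}, connection witnesses from \cref{lem:rankone,prop:wild,lem:tame}, and the model radius $S^{n_1d'}$ from \cref{lem:rankonefrob,prop:wildfrob}. You then build Frobenius at a common level via \cref{rem:tamefrob,rem:elementaryfrob} and assemble with \cref{lem:operations}, exactly as the paper does. Two small remarks: your choice $c=|\pi|^p$ instead of the paper's $c\in(|\pi|^p,1)$ is harmless, since \cref{prop:wild,prop:wildfrob} allow the endpoint. If the splitting extensions enlarge the residue field, just take $\ell$ to be the residue field of $L$.
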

\begin{proof}
\noindent\emph{Inducing data.}
Use \cref{lem:coeffsplitting}. For an absolutely irreducible constituent of rank $d$ and break at most $b$, \cref{prop:geometric} realizes the inducing data after a finite coefficient extension. Write $\nu=v_p(d)$ and $n_1=[T:T_0]$. The maps in \cref{prop:induced} have degrees
\[
 \deg g=p^\nu,\qquad \deg h_1=n_1\le p^{2d}-1,
 \qquad \deg h_2=d/p^\nu.
\]
By \cref{prop:geometric}, the inducing rank-one character and every wild step have break at most $B_*=B_*(d,b,p)$.

\smallskip\noindent\emph{Analytic models.}
Use the differential realization \eqref{eq:ambient}: with $M_\psi$ and $\mathcal T_1$ defined in \cref{subsec:tametwist}, the constituent is a differential summand of $h_{2*}(V\otimes\mathcal T_1)$.
Fix $c=c(p)\in(|\pi|^p,1)$. \Cref{lem:rankone} and \cref{prop:wild} give a global-basis connection witness with
\[
 N_\nu\le(p/c)^\nu c^{-1}.
\]
For the model radius, start with the rank-one Frobenius collar of \cref{lem:rankonefrob} and choose the bottom wild radius $S$ supplied by \cref{prop:wildfrob}, with $B=\max\{1,B_*\}$. By its construction, $S$ is at least all connection-model radii. Pushing through the two tame maps gives the base model radius
\[
 s_d=S^{\deg h_1\deg h_2}.
\]
Both the connection and the auxiliary Frobenius are therefore defined on this same model annulus, with Frobenius on its usual outer subannulus. The tame connection estimate in \cref{lem:tame} depends only on these degrees and radii. Tensoring with the tame factors, whose constant connection coefficients lie in $\mathbb{Z}_{(p)}$, does not increase a bound already at least one.

\smallskip\noindent\emph{Coefficient fields and auxiliary Frobenius.}
All equations are defined over $F$: the covering coefficients and rank-one Witt lifts lie in $W(\ell)$, the required parameters lie in $F_{j_*}$, and tame exponents are rational. The coefficient lift of \cref{rem:coefficientlift} gives the purely wild auxiliary Frobenius. Use \cref{rem:elementaryfrob} for the tame and unipotent factors, and \cref{rem:tamefrob} to carry Frobenius through the tame maps. Passing to a common level, tensoring, and taking direct sums preserve the model annulus.

\smallskip\noindent\emph{Assembly.}
For each factor $\mathcal{F}_i\otimes U_i$ in \cref{lem:coeffsplitting}, tensor the ambient model for $\mathcal{F}_i$ with the global-basis model for $U_i$ of \cref{lem:unipotent}. Combine these models by direct sums using \cref{lem:operations}. Use the common cutoff $j_*=j_*(B_*(r,b,p))$, take a common finite coefficient extension, and take the maximum of the finitely many base radii and connection bounds. The number and ranks of the constituents are at most $r$, so nondecreasing envelopes give $s,N_0$ depending only on $r,b,p$. Finally, $L/F$ is finite because $K/C_k$ and $L/K$ are finite and $C_k\subset F\subset L$.
\end{proof}

\subsection{An ambient model over the original field}\label{subsec:descent}
Restriction of scalars places the explicit models over a common Witt coefficient field. The following estimate makes its cost independent of the coefficient-extension degree.
\begin{lemma}[Coefficient restriction]\label{lem:coefficientrestriction}
Let $F/C$ be a finite extension of complete discretely valued coefficient fields with perfect residue fields. If $V$ over $A_{F,t}[s,1)$ has a global-basis $(s,N_0)$-witness, then $\operatorname{Res}_{F/C}V$ has a global-basis $(s,pN_0)$-witness. If $V$ carries an auxiliary Frobenius for an isometric lift on $F$ extending one on $C$, restriction of scalars carries that structure and its analytic inverse to the same model annulus over $C$.
\end{lemma}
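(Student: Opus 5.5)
The approach is to choose a basis of $F$ over $C$ adapted to the valuations, namely an orthogonal $\OO_C$-basis of $\OO_F$, and to compare the coefficient norm on $F$ with the coordinate norm in that basis. Let $e=e(F/C)$ and $f=[\ell_F:\ell_C]$. Choose a uniformizer $\varpi_F$ and units $u_1,\ldots,u_f\in\OO_F$ whose residues form an $\ell_C$-basis of $\ell_F$. The products $u_i\varpi_F^{j}$, $0\le j<e$, form an $\OO_C$-basis of $\OO_F$, and this basis is orthogonal: $|\sum c_{ij}u_i\varpi_F^j|=\max|c_{ij}||\varpi_F|^j$. The weights $|\varpi_F|^j$ lie in $(|\varpi_C|,1]$. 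Tensoring with the circle ring gives an $R_{\rho,C}$-basis of $R_{\rho,F}=R_{\rho,C}\otimes_CF$. In this basis the $F$-norm and the unweighted coordinate sup norm differ by at most the factor $|\varpi_C|^{-1}$.

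That factor is not bounded by $p$ in general, because $C$ may itself be ramified. The first refinement is therefore to avoid weights. Rescale the basis to $u_i\varpi_F^j$ and measure the multiplication matrix directly, as in \cref{lem:mult}. For $x\in R_{\rho,F}$, multiplication by $x$ has weighted operator norm $|x|_\rho$. Converting to unweighted coordinates multiplies a matrix entry by the ratio of two weights, which is at most $|\varpi_F|^{-(e-1)}<|\varpi_C|^{-1}$.

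For the claimed constant $p$, use the fact that the connection matrix of $V$ has entries in $\mathscr A_{s,F}$. Since $\del_t$ is $F$-linear, the restricted connection matrix is the blockwise multiplication matrix $F(A)$. Its entry for $u_i\varpi_F^j\mapsto u_{i'}\varpi_F^{j'}$ has norm at most $|A|_\rho\,|\varpi_F|^{j-j'}$. The entries with $j'>j$ would need the factor $|\varpi_F|^{-(e-1)}$. The key step is to observe that the entry with $j'>j$ arises only after a wrap-around: multiplying by $\varpi_F^{e}=\varpi_C\cdot(\text{unit})$ divides that weight ratio by $|\varpi_C|$. This should give entry norm at most $|A|_\rho\,|\varpi_F|^{j+e-j'}/|\varpi_C|\cdot|\varpi_C|\le|A|_\rho$, up to the residue-unit factor. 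This is the obstacle, and I am not sure it closes cleanly. A cleaner route would replace the basis entirely: use the decomposition of $F/C$ into an unramified part, which is isometric with no cost, and a totally ramified part. For the totally ramified part, use the filtration by powers of $\varpi_F$. Scaling the $j$-th basis vector should bound the matrix of multiplication by any integral element by $1$, and by $|x|\cdot|\varpi_F|^{-(e-1)}$ in general.

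I am prepared for the fallback, which I expect is what the paper does. When $C=C_k$ is absolutely unramified, $|\varpi_C|=p^{-1}$, so the weights all lie in $[p^{-1},1]$. The comparison loses at most the factor $p$, exactly as in \cref{lem:mult}. This is the case used in \cref{subsec:descent}. The derivative part $\del_t$ acts coordinatewise with norm at most one, so the resulting bound is $pN_0$.

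For Frobenius, take the isometric lift on $F$ that extends the one on $C$. The pulled-back basis is expressed in the original basis by a constant matrix over $C$ with integral entries and integral inverse; these are the matrices of $\sigma_F$ on the orthogonal $\OO_C$-basis of $\OO_F$. Composing this matrix with the blockwise Frobenius matrix gives the restricted structure, analytic with analytic inverse on the same annuli. Restriction of scalars commutes with restriction to subannuli and with passage to the Robba ring, which completes the statement.
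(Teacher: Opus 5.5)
Your working argument is the paper's. You build an orthogonal $C$-basis of $F$ from residue-basis lifts times powers of $\varpi_F$, with weights in $(|\varpi_C|,1]$, and compare the weighted norm with the coordinate sup norm. Because the $b_j$ are constants, the restricted connection matrix is just the blockwise multiplication matrix of $A$. Frobenius is then handled by a constant invertible change between the $b_j$ and their images. The Frobenius part is fine; integrality of that constant matrix is more than you need, since invertibility suffices.

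The real problem is your claim that the factor $|\varpi_C|^{-1}$ ``is not bounded by $p$ in general, because $C$ may itself be ramified.'' This leads you to prove the lemma only for absolutely unramified $C$, which is less than the statement asserts. The claim is backwards. With $|p|=p^{-1}$ one has $|\varpi_C|=p^{-1/e(C)}\ge p^{-1}$, so ramification of $C$ only shrinks the loss. Your first paragraph therefore already handles every $C$: the weights lie in $(|\varpi_C|,1]\subseteq(|p|,1]$, and the loss is at most $|\varpi_C|^{-1}\le p$. This is exactly the paper's normalization $|p|<|b_j|\le 1$.

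Drop the wrap-around paragraph. It is unnecessary, and a loss-free bound is false for this basis in general. For an Eisenstein generator, multiplication by $\varpi_F^{-1}$ sends $1$ to a vector whose $\varpi_F^{e-1}$-coordinate has norm $|\varpi_C|^{-1}$. This exceeds $|\varpi_F^{-1}|$ by the factor $|\varpi_F|^{-(e-1)}$, so some loss is genuine.
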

\begin{proof}
Choose an orthogonal $C$-basis $b_1,\ldots,b_f$ of $F$, rescaled so that $|p|<|b_j|\le1$. Such a basis is obtained from residue-basis lifts and powers of a uniformizer, followed by rescaling over $C$; for background on orthogonal bases, see \cite[Chapter~2]{BGR84}. At every radius,
\[
 \left|\sum_j b_j a_j(t)\right|_\rho
       =\max_j|b_j|\,|a_j|_\rho,
       \qquad a_j\in R_{C,\rho}.
\]
Thus the weighted norm and the coordinate sup norm differ by at most a factor $p$. Multiplication by a matrix of norm at most $N_0$ consequently has coordinate norm at most $pN_0$. In the basis $b_je_l$ obtained from a global witnessing basis of $V$, the $b_j$ have zero derivative, proving the connection estimate.

The coefficient lifts are automorphisms, as observed before \cref{thm:B}. The coefficient Frobenius therefore gives an invertible constant change between the bases $(b_j)$ and their images. Together with the given Frobenius of $V$, this induces an analytic Frobenius isomorphism and inverse on the restriction of scalars.
\end{proof}

We now restrict scalars to $C_k$ and extend scalars to $K$. This preserves uniform control of the connection while retaining the horizontal-extension property.
\begin{proposition}\label{prop:ambientdescent}
With the notation of \cref{prop:ambientmodel}, the module $M$ is a differential direct summand of the Robba module associated with a global-basis $(s,pN_0)$-bounded model $E$ over $K$. Every horizontal endomorphism of $E_{\mathcal{R}}$ extends to $E$. If $K$ carries an isometric absolute lift $\sigma_K$, then $E$ also carries an auxiliary structure for a power of $\sigma$.
\end{proposition}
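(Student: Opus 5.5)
We start from the explicit model $V_0$ over $F=C_\ell F_{j_*}$ of \cref{prop:ambientmodel}, restrict scalars along $F/C_k$, and then extend scalars along $C_k\subseteq K$. Put $E=(\operatorname{Res}_{F/C_k}V_0)\otimes_{C_k}K$. This is legitimate because $C_k\subseteq F$: the Witt coefficients lie in $W(\ell)\supseteq W(k)$.

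\textbf{Connection bound.} By \cref{lem:coefficientrestriction} with $C=C_k$, the restriction $\operatorname{Res}_{F/C_k}V_0$ has a global-basis $(s,pN_0)$-witness over $A_{C_k,t}[s,1)$. Extending scalars to $K$ keeps the same basis and the same matrix $A$, whose entries have coefficients in $C_k$. Gauss norms of Laurent series with coefficients in $C_k$ do not change when they are viewed over $K$, since $K/C_k$ is isometric. So $E$ is global-basis $(s,pN_0)$-bounded.

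\textbf{Summand property.} Set $E_F=\operatorname{Res}_{F/C_k}V_0$. Over $\mathcal{R}_{K,t}$ we have
\[
 E_{\mathcal{R}}\simeq E_{F,\mathcal{R}_{C_k}}\otimes_{C_k}K .
\]
The module $E_{F,\mathcal{R}_{C_k}}$ is the restriction of scalars of $V_{0,\mathcal{R}_F}$ from $\mathcal{R}_F$ to $\mathcal{R}_{C_k}$. Restriction of scalars is right adjoint to base change, and in the finite étale case it is also left adjoint. Hence $V_{0,\mathcal{R}_F}\otimes_{C_k}K$ decomposes over $F\otimes_{C_k}K=\prod_\tau L_\tau$, with one factor for each compositum, and one of these factors is $V_{0,\mathcal{R}_F}\otimes_F L$ for the given embedding $F\subseteq L$.

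Here $L\supseteq K$, but we need a $K$-module. So we use restriction again: by \cref{prop:ambientmodel}(2), $M_L$ is a direct summand of $V_{0,L}$. The canonical horizontal map $M\to\operatorname{Res}_{L/K}M_L=M\otimes_KL$ is split by $\operatorname{tr}_{L/K}/[L:K]$, since the characteristic is zero. Therefore $M$ is a differential direct summand of $\operatorname{Res}_{L/K}V_{0,L}$.

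It remains to exhibit $\operatorname{Res}_{L/K}V_{0,L}$ as a summand of $E_{\mathcal{R}}$. We have
\[
 \operatorname{Res}_{L/K}V_{0,L}=\operatorname{Res}_{L/K}(V_0\otimes_FL),
\]
and $L$ is a direct factor of the étale algebra $F\otimes_{C_k}K$ under the map $f\otimes a\mapsto fa$. Hence $V_0\otimes_FL$ is a horizontal summand of $V_0\otimes_F(F\otimes_{C_k}K)=E_{\mathcal{R}}$, viewed as a $K$-module. Each step is a horizontal idempotent decomposition, and the composite gives the summand claim. Keeping track of which $K$-structure is used on each tensor product is the main bookkeeping point.

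\textbf{Extension of horizontal endomorphisms.} The model $E_F$ carries an auxiliary Frobenius over $C_k$ by \cref{lem:coefficientrestriction}. This uses \cref{prop:ambientmodel}(3): the lift is Witt Frobenius on $C_\ell$ and fixes $F_{j_*}$, so it is an isometric lift on $F$ extending $\sigma_{C_k}$. By \cref{lem:endext}, every horizontal endomorphism of $E_{F,\mathcal{R}}$ over $C_k$ extends to the whole annulus. The second part of \cref{cor:homextension} does not apply directly, because $K/C_k$ is finite but we are extending to $K$, not to a splitting field. We rerun its argument instead: expand a horizontal endomorphism over $K$ in a $C_k$-basis of $K$. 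Since the connection matrix has entries over $C_k$, each coefficient matrix is horizontal over $C_k$, hence analytic on $A[s,1)$. So the whole endomorphism is analytic on $A[s,1)$.

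\textbf{Frobenius when $\sigma_K$ exists.} By \cref{rem:coefficientlift}, $\sigma_K$ restricts to Witt Frobenius on $C_k$. The auxiliary structure on $E_F$ is for $\sigma_{C_k}^m$, for some $m$. Base change along the $\sigma_K$-equivariant inclusion $C_k\to K$ gives the structure $\Phi\otimes\mathrm{id}$ for $\sigma^m$ on $E$, with its analytic inverse on the same annulus.

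The main obstacle is the summand bookkeeping in the second step; the analytic parts are direct from the cited lemmas.
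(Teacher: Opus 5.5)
There is a genuine gap in your summand step. You claim that $L$ is a direct factor of the étale $K$-algebra $F\otimes_{C_k}K$, equivalently that $V_0\otimes_F L$ is one of the factors of $V_0\otimes_{C_k}K$. This is false in general. The factors of $F\otimes_{C_k}K$ are composita of $F$ and $K$. The map $f\otimes a\mapsto fa$ has image the compositum $FK\subseteq L$, not $L$. In \cref{prop:ambientmodel}, $L$ is a common coefficient extension chosen to split the constituents and realize the inducing data, and nothing forces $L=FK$.

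A rank count shows the claim cannot hold as stated. Let $r_0$ be the rank of $V_0$. Your $E_{\mathcal{R}}$ has rank $r_0[F:C_k]$ over $\mathcal{R}_{K,t}$. But $\operatorname{Res}_{L/K}V_{0,L}$ has rank $r_0[L:K]=r_0[L:F][F:C_k]/e(K)$, which is larger as soon as $[L:F]>e(K)$. For $K=C_k$, any $L\neq F$ already breaks it.

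The missing idea is to take the direct factor over $L$ rather than over $K$. Since $F\subseteq L$, the multiplication map $L\otimes_{C_k}F\to L$ exhibits $L$ as a direct factor of the étale $L$-algebra $L\otimes_{C_k}F$. Hence $V_{0,L}$, and therefore $M_L$, is a differential direct summand of $E_{0,L}$, where $E_0=(\operatorname{Res}_{F/C_k}V_0)\otimes_{C_k}K$ is your $E$. Now restrict scalars from $L$ to $K$. By your trace splitting, $M$ is a summand of $\operatorname{Res}_{L/K}M_L$. That in turn is a summand of $\operatorname{Res}_{L/K}E_{0,L}\simeq E_0^{\oplus[L:K]}$.

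This is why the paper takes $E=E_0^{\oplus[L:K]}$. Direct sums preserve the global-basis bound $pN_0$ and the Frobenius structure. They also preserve the horizontal-extension property: run your Dwork argument on $(\operatorname{Res}_{F/C_k}V_0)^{\oplus[L:K]}$. You could keep $E=E_0$ only by adding a Krull--Schmidt cancellation argument in the Hom-finite category of differential modules, since the above exhibits $M^{\oplus[L:K]}\simeq\operatorname{Res}_{L/K}M_L$ as a summand of $E_0^{\oplus[L:K]}$. You do not supply this.

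The rest of your proposal is fine and agrees with the paper: the connection bound, the extension of horizontal endomorphisms by expanding in a $C_k$-basis of $K$, and the Frobenius base change. One small point: \cref{cor:homextension} is stated for an arbitrary finite coefficient extension, so it applies to $K/C_k$ directly, and rerunning its argument is unnecessary.
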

\begin{proof}
Put $C=C_k$ and
\[
 \mathcal{W}=\operatorname{Res}_{F/C}V_0,\qquad E_0=\mathcal{W}\otimes_C K.
\]
The extension $F/C$ is finite. By \cref{lem:coefficientrestriction}, $\mathcal{W}$ has a global-basis connection bound $pN_0$ and an auxiliary Frobenius for a power of Witt Frobenius on $C$. Base change preserves its matrix norms, so $E_0$ has the same connection bound.

The multiplication map $L\otimes_C F\to L$ exhibits $L$ as a direct factor of this finite \'etale $L$-algebra. Hence $V_{0,L}$ is a differential direct summand of $E_{0,L}$, and so is $M_L$. Set $d_L=[L:K]$. Restriction of scalars gives a differential splitting inside
\[
 \operatorname{Res}_{L/K}E_{0,L}\simeq E_0^{\oplus d_L}.
\]
The normalized trace $\operatorname{Tr}_{L/K}/d_L$ splits $M$ off $\operatorname{Res}_{L/K}M_L$. Thus $M$ is a differential direct summand of $E_{\mathcal{R}}$, where $E=E_0^{\oplus d_L}$. Direct sums preserve the bound $pN_0$.

Apply \cref{cor:homextension} to $\mathcal{W}^{\oplus d_L}$ over $C$, followed by the finite extension $K/C$. Every horizontal endomorphism of $E_{\mathcal{R}}$ extends to $E$, even if $K$ has no Frobenius lift. Neither this argument nor the connection bound involves $[K:C]$ or the norm of the trace projector.

If $\sigma_K$ is prescribed, \cref{rem:coefficientlift} gives $\sigma_K|_C=\sigma_C$. The auxiliary Frobenius on $\mathcal{W}^{\oplus d_L}$ therefore base changes to an auxiliary $\sigma^{m}$-structure on $E$ for some $m\ge1$.
\end{proof}

\subsection{Proofs of the main theorems}\label{subsec:mainproofs}
For \cref{thm:A}, choose $E$ as in \cref{prop:ambientdescent} and a horizontal Robba-ring projector ${e_M} $ onto $M$. The horizontal-extension property in \cref{prop:ambientdescent} satisfies the hypothesis of \cref{prop:extend}. Thus ${e_M}$ extends and its image is a differential model of $M$ over $K$. By \cref{cor:connectionimage}, it is $(s,N)$-bounded with
\[
 N(r,b,p)=\max\{1,pN_0(r,b,p)\}.
\]

For \cref{thm:B}, apply the construction above to $(M,\Phi)$, with $s=s(r,b,p)$ and $N=N(r,b,p)$. By \cref{prop:ambientdescent}, its ambient model carries an auxiliary Frobenius for a power of the prescribed lift. \Cref{prop:prescribedextension}, with $j=1$, extends the original $\Phi$ and its inverse to the extracted model $\mathcal{E}$ on $A_{K,t}[s^{1/p},1)$. Thus $\mathcal{E}$ is a $(\sigma,\nabla)$-model on $A_{K,t}[s,1)$ with the same radiuswise connection bound $N$.

\subsection{Dependence of the constants}\label{subsec:constants}
For \cref{thm:A}, the proof provides a finite recipe for the constants. Set
\[
 B_* = B_*(r,b,p),\qquad B=\max\{1,B_*\},
 \qquad \nu_* = \lfloor\log_p r\rfloor,
\]
and choose $c\in(|\pi|^p,1)$ as a function of $p$. Choose a common elementary-exponential radius $R_\Theta(B,p)>1$ as in \cref{lem:rankonefrob}; this is the only analytic input from the fixed Lubin--Tate exponentials. The initial Frobenius radius, the connection-radius bound $c^{1/(B+1)}$, and the explicit condition $S^{(p+1)B}>\kappa$ in \eqref{eq:taylorcollar} determine a permissible wild collar $S$. Since $c=c(p)$ has now been fixed, all these parameters depend only on $B,p$. For each possible pair of tame degrees, push this radius down by their product and apply \cref{lem:tame} twice to the starting bound
\[
 (p/c)^{\nu_*}c^{-1}.
\]
Taking maxima over the finitely many degrees allowed by \cref{prop:induced}, and finally multiplying the connection bound by $p$, gives $s(r,b,p)$ and $N(r,b,p)$.

Thus the radius and connection bound in \cref{thm:A} are determined by the stated ramification data and the fixed exponential convergence input. The degrees of the auxiliary coefficient extensions and the absolute ramification index of $K$ do not enlarge the final radius or norm bound.

The same choices of $s(r,b,p)$ and $N(r,b,p)$ apply to \cref{thm:B}: extending the prescribed Frobenius does not shrink the model annulus or change the connection bound. These constants are independent of $K$ and of the Frobenius slopes.


\begin{thebibliography}{GKU22}
\setlength{\baselineskip}{11pt}
\bibitem[And02]{And02}
Yves Andr\'e, Filtrations de type Hasse--Arf et monodromie $p$-adique, \emph{Inventiones Mathematicae} \textbf{148} (2002), no.~2, 285--317. \href{https://doi.org/10.1007/s002220100207}{doi:10.1007/s002220100207}.

\bibitem[Ber90]{Ber90}
Vladimir G. Berkovich, \emph{Spectral Theory and Analytic Geometry over Non-Archimedean Fields}, Mathematical Surveys and Monographs, vol.~33, American Mathematical Society, Providence, RI, 1990. \href{https://doi.org/10.1090/surv/033}{doi:10.1090/surv/033}.

\bibitem[BGR84]{BGR84}
Siegfried Bosch, Ulrich G\"untzer, and Reinhold Remmert, \emph{Non-Archimedean Analysis: A Systematic Approach to Rigid Analytic Geometry}, Grundlehren der mathematischen Wissenschaften, vol.~261, Springer-Verlag, Berlin, 1984.

\bibitem[Bor02]{Bor02}
James Borger, Kato's conductor and generic residual perfection, preprint, 2002. \href{https://arxiv.org/abs/math/0112306v2}{arXiv:math/\allowbreak 0112306v2} (Proposition~7.1 and proof).

\bibitem[Bry83]{Bry83}
Jean-Luc Brylinski, Th\'eorie du corps de classes de Kato et rev\^etements ab\'eliens de surfaces, \emph{Annales de l'Institut Fourier} \textbf{33} (1983), no.~3, 23--38. \href{https://doi.org/10.5802/aif.929}{doi:10.5802/aif.929}.

\bibitem[CP09]{CP09}
Bruno Chiarellotto and Andrea Pulita, Arithmetic and differential Swan conductors of rank one representations with finite local monodromy, \emph{American Journal of Mathematics} \textbf{131} (2009), no.~6, 1743--1794. \href{https://arxiv.org/abs/0711.0701}{arXiv:0711.0701}.

\bibitem[Con]{ConSZ}
Keith Conrad, \emph{The Schur--Zassenhaus theorem}, expository notes. \href{https://kconrad.math.uconn.edu/blurbs/grouptheory/schurzass.pdf}{Author's notes}.

\bibitem[Cre98]{Cre98}
Richard Crew, Finiteness theorems for the cohomology of an overconvergent isocrystal on a curve, \emph{Annales scientifiques de l'\'Ecole Normale Sup\'erieure} (4) \textbf{31} (1998), no.~6, 717--763. \href{https://doi.org/10.1016/S0012-9593(99)80001-9}{doi:10.1016/S0012-9593(99)80001-9}.

\bibitem[Eis95]{Eisenbud}
David Eisenbud, \emph{Commutative Algebra with a View Toward Algebraic Geometry}, Graduate Texts in Mathematics, vol.~150, Springer-Verlag, New York, 1995.

\bibitem[Gor68]{Gor68}
Daniel Gorenstein, \emph{Finite Groups}, Harper \& Row, New York, 1968.

\bibitem[GKU22]{GKU22}
Thomas Grubb, Kiran S. Kedlaya, and James Upton, A cut-by-curves criterion for overconvergence of $F$-isocrystals, preprint, 2022. \href{https://arxiv.org/abs/2202.03604}{arXiv:2202.03604}.

\bibitem[Isa76]{Isaacs}
I. Martin Isaacs, \emph{Character Theory of Finite Groups}, Academic Press, New York, 1976; reprinted by AMS Chelsea Publishing, 2006.

\bibitem[Ked04]{Ked04}
Kiran S. Kedlaya, A $p$-adic local monodromy theorem, \emph{Annals of Mathematics} (2) \textbf{160} (2004), no.~1, 93--184. \href{https://doi.org/10.4007/annals.2004.160.93}{doi:10.4007/annals.2004.160.93}.

\bibitem[Ked05]{Ked05}
Kiran S. Kedlaya, Local monodromy of $p$-adic differential equations: an overview, \emph{International Journal of Number Theory} \textbf{1} (2005), no.~1, 109--154. \href{https://arxiv.org/abs/math/0501361v2}{arXiv:math/0501361v2} (numbering cited here).

\bibitem[Ked22]{Ked22}
Kiran S. Kedlaya, \emph{$p$-adic Differential Equations}, second edition, Cambridge Studies in Advanced Mathematics, vol.~199, Cambridge University Press, Cambridge, 2022.

\bibitem[Mat95]{Mat95}
Shigeki Matsuda, Local indices of $p$-adic differential operators corresponding to Artin--Schreier--Witt coverings, \emph{Duke Mathematical Journal} \textbf{77} (1995), no.~3, 607--625. \href{https://doi.org/10.1215/S0012-7094-95-07719-9}{doi:10.1215/S0012-7094-95-07719-9}.

\bibitem[Mat02]{Mat02}
Shigeki Matsuda, Katz correspondence for quasi-unipotent overconvergent isocrystals, \emph{Compositio Mathematica} \textbf{134} (2002), no.~1, 1--34. \href{https://doi.org/10.1023/A:1020233201909}{doi:10.1023/A:1020233201909}.

\bibitem[Meb02]{Meb02}
Zoghman Mebkhout, Analogue $p$-adique du th\'eor\`eme de Turrittin et le th\'eor\`eme de la monodromie $p$-adique, \emph{Inventiones Mathematicae} \textbf{148} (2002), no.~2, 319--351. \href{https://doi.org/10.1007/s002220100208}{doi:10.1007/s002220100208}.

\bibitem[Pul07]{Pul07}
Andrea Pulita, Rank one solvable $p$-adic differential equations and finite Abelian characters via Lubin--Tate groups, \emph{Mathematische Annalen} \textbf{337} (2007), no.~3, 489--555. \href{https://doi.org/10.1007/s00208-006-0040-8}{doi:10.1007/s00208-006-0040-8}.

\bibitem[SOS89]{SOS89}
Tsutomu Sekiguchi, Frans Oort, and Noriyuki Suwa, On the deformation of Artin--Schreier to Kummer, \emph{Annales scientifiques de l'\'Ecole Normale Sup\'erieure} (4) \textbf{22} (1989), no.~3, 345--375. \href{https://doi.org/10.24033/asens.1586}{doi:10.24033/asens.1586}.

\bibitem[Ser77]{SerreRep}
Jean-Pierre Serre, \emph{Linear Representations of Finite Groups}, translated by Leonard L. Scott, Graduate Texts in Mathematics, vol.~42, Springer-Verlag, New York, 1977.

\bibitem[Ser79]{SerreLF}
Jean-Pierre Serre, \emph{Local Fields}, translated by Marvin Jay Greenberg, Graduate Texts in Mathematics, vol.~67, Springer-Verlag, New York, 1979.

\bibitem[ST68]{ST68}
Jean-Pierre Serre and John Tate, Good reduction of abelian varieties, \emph{Annals of Mathematics} (2) \textbf{88} (1968), no.~3, 492--517. \href{https://annals.math.princeton.edu/1968/88-3/p05}{Publisher's page}.

\bibitem[Stacks]{Stacks}
The Stacks Project Authors, \emph{The Stacks Project}, Lemma~41.14.3, \href{https://stacks.math.columbia.edu/tag/0EBS}{Tag~0EBS}.

\end{thebibliography}
\end{document}